\documentclass[11pt,reqno]{amsart}

\usepackage[a4paper,textwidth=6.5in,top=25mm,bottom=25mm,headheight=12pt,headsep=12pt]{geometry}
\usepackage[utf8]{inputenc}
\usepackage{amsmath,amssymb,amsfonts,mathrsfs,mathtools}
\usepackage{enumitem}
\usepackage{array}
\usepackage{float}
\usepackage{microtype}
\usepackage{xcolor}
\usepackage{cite}
\makeatletter
\def\hyper@nopatch@thm{}
\makeatother
\usepackage[colorlinks=true,citecolor=blue,linkcolor=blue,urlcolor=black]{hyperref}

\allowdisplaybreaks[2]
\setlist{topsep=4pt plus 1pt minus 1pt,itemsep=2pt,parsep=0pt}
\numberwithin{equation}{section}

\newtheorem{theorem}{Theorem}[section]
\newtheorem{proposition}[theorem]{Proposition}
\newtheorem{lemma}[theorem]{Lemma}

\theoremstyle{definition}

\theoremstyle{remark}
\newtheorem{remark}[theorem]{Remark}

\title[Determining $W^{1,n}$ conductivities]
{Determining $W^{1,n}$ conductivities in Calder\'on's problem for $n\geq4$}

\author{Lu Chen}
\address[Lu Chen]{Key Laboratory of Algebraic Lie Theory and Analysis, Ministry of Education, School of Mathematics and Statistics, Beijing Institute of Technology, Beijing 100081, PR China}
\email{chenlu5818804@163.com}

\author{Yan Jiang}
\address[Yan Jiang]{Department of Mathematics, City University of Hong Kong, Hong Kong SAR, China}
\email{yjian24@cityu.edu.hk}

\author{Hongyu Liu}
\address[Hongyu Liu]{Department of Mathematics, City University of Hong Kong, Hong Kong SAR, China}
\email{hongyu.liuip@gmail.com, hongyliu@cityu.edu.hk}

\author{Longyue Tao}
\address[Longyue Tao]{Department of Mathematics, City University of Hong Kong, Hong Kong SAR, China}
\email{sdyctly@163.com, longyue.tao@my.cityu.edu.hk}

\subjclass[2020]{Primary 35R30; Secondary 35J15, 47B10}
\keywords{inverse conductivity problem, critical Sobolev regularity, complex geometrical optics, Hilbert--Schmidt operators, trace ideals, boundary determination}
\date{}
\hypersetup{
  hypertexnames=false,
  pdfauthor={Lu Chen, Yan Jiang, Hongyu Liu, and Longyue Tao},
  pdftitle={Determining W(1,n) conductivities in Calderon's problem for n >= 4}
}
\begin{document}
\raggedbottom
\setlength{\abovedisplayskip}{7pt plus 2pt minus 2pt}
\setlength{\belowdisplayskip}{7pt plus 2pt minus 2pt}
\setlength{\abovedisplayshortskip}{0pt plus 2pt}
\setlength{\belowdisplayshortskip}{4pt plus 2pt minus 2pt}

\begin{abstract}
We establish a global uniqueness result at the critical Sobolev regularity for the isotropic Calder\'on problem in all dimensions \(n\ge 4\).
More precisely, for every \(n\ge4\) and every bounded Lipschitz domain \(\Omega\subset\mathbb{R}^n\), any real-valued, uniformly elliptic scalar conductivity \(\gamma\in W^{1,n}(\Omega)\) is uniquely determined by its full Dirichlet-to-Neumann map.
This settles a long-standing open problem in the field.
To prove uniqueness at this endpoint, we develop an \emph{averaged trace norm method}, which combines complex geometrical optics with trace norm estimates averaged over wave orientations.
\end{abstract}

\maketitle

\section{Introduction}\label{sec:introduction}
\subsection{Motivation and statement}\label{sec:problem}
Let $\Omega\subset\mathbb{R}^n$, $n\geq3$, be a bounded Lipschitz domain, and let $\gamma$ be a real-valued scalar conductivity on $\Omega$.
For a prescribed boundary voltage $\varphi\in H^{1/2}(\partial\Omega)$, the electric potential $u\in H^1(\Omega)$ is the unique weak solution of the Dirichlet problem for the conductivity equation
\begin{equation}\label{eq:conductivity}
\begin{cases}
\operatorname{div}(\gamma\nabla u)=0 & \text{in }\Omega,\\
u=\varphi & \text{on }\partial\Omega
\end{cases},
\end{equation}
where $\gamma$ is uniformly elliptic, with constants $c,C>0$ such that
\begin{equation}\label{eq:ellipticity}
0<c\leq\gamma(x)\leq C<\infty\quad\text{for almost every }x\in\Omega.
\end{equation}

The Dirichlet-to-Neumann map $\Lambda_{\gamma,\Omega}:H^{1/2}(\partial\Omega)\longrightarrow H^{-1/2}(\partial\Omega)$ assigns to each boundary voltage $\varphi$ the normal current flux generated by the corresponding solution $u$.
Formally, $\Lambda_{\gamma,\Omega}\varphi=\gamma\partial_\nu u|_{\partial\Omega}$, where $\nu$ is the outward unit normal.
For $\psi\in H^{1/2}(\partial\Omega)$ and any $v\in H^1(\Omega)$ with trace $\psi$, it is defined weakly by
\begin{equation}\label{eq:dn-intro}
\langle\Lambda_{\gamma,\Omega}\varphi,\psi\rangle
=\int_\Omega\gamma\nabla u\cdot\nabla v\,\mathrm{d}x.
\end{equation}
This pairing is independent of the choice of $v$.
Knowing the full map means knowing the current on the entire boundary for every prescribed boundary voltage.

For conductivities \(\gamma\in W^{1,n}(\Omega;\mathbb{R})\) satisfying \eqref{eq:ellipticity}, the inverse conductivity problem asks whether the full map \(\Lambda_{\gamma,\Omega}\) uniquely determines \(\gamma\) inside \(\Omega\).
This inverse problem is known as the \emph{(isotropic) Calder\'on problem} \cite{calderon2006inverse}.
The issue of unique identifiability is of primary importance for this inverse problem: for any two conductivities \(\gamma_1,\gamma_2\) in this class, it asks whether
\begin{equation}\label{eq:inverse-problem}
\Lambda_{\gamma_1,\Omega}=\Lambda_{\gamma_2,\Omega}
\quad\Longleftrightarrow\quad
\gamma_1=\gamma_2\quad\text{a.e. in }\Omega.
\end{equation}

The $W^{1,n}$ assumption requires the weak gradient $\nabla\gamma$ to lie in $L^n(\Omega)$; it allows unbounded gradients and does not require continuity of $\gamma$.
To see why this regularity is critical, set $F=\tfrac12\nabla\log\gamma$.
The $W^{1,n}$ assumption and \eqref{eq:ellipticity} imply $F\in L^n(\Omega)$, and the interior equation in \eqref{eq:conductivity} can be written as $\Delta u+2F\cdot\nabla u=0$ in the sense of distributions.
For $r>0$, define $F_r(x)=rF(rx)$ on $r^{-1}\Omega$.
Then
\begin{equation}\label{eq:critical-scaling}
\|F_r\|_{L^n(r^{-1}\Omega)}=\|F\|_{L^n(\Omega)}.
\end{equation}
Thus $W^{1,n}$ is the \emph{critical regularity} class for this scaling: the $L^n$ norm of the first-order coefficient remains unchanged.
To our knowledge, no theorem establishes global uniqueness from the full Dirichlet-to-Neumann map for all uniformly elliptic scalar conductivities under a weaker Sobolev assumption in dimensions $n\geq3$.

Uniqueness at this regularity was established for $n=3,4$ \cite{haberman2015uniqueness}.
For $n\geq5$, previous results required additional Sobolev smoothness \cite{ponce2021bilinear}, leaving the $W^{1,n}$ case open.
We establish uniqueness at this critical regularity for every $n\geq4$, without additional smoothness or smallness assumptions.

\begin{samepage}
\begin{theorem}\label{thm:main}
Let $n\geq4$ and let $\Omega\subset\mathbb{R}^n$ be a bounded Lipschitz domain.
Suppose that $\gamma_1,\gamma_2\in W^{1,n}(\Omega;\mathbb{R})$ satisfy \eqref{eq:ellipticity} with common positive constants $c,C$.
If
\[
\Lambda_{\gamma_1,\Omega}=\Lambda_{\gamma_2,\Omega}
\quad\text{as maps }H^{1/2}(\partial\Omega)\longrightarrow H^{-1/2}(\partial\Omega),
\]
then $\gamma_1=\gamma_2$ almost everywhere in $\Omega$.
\end{theorem}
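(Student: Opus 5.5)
We follow the Haberman--Tataru/Haberman strategy. The new ingredient is an averaged trace-norm estimate, which replaces the $L^2$-based averaging that only closes for $n\le4$.

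\emph{Step 1: boundary determination and extension.} Using Lipschitz boundary determination for $W^{1,n}$ conductivities (Brown's method adapted to the critical class), $\Lambda_{\gamma_1}=\Lambda_{\gamma_2}$ forces $\gamma_1=\gamma_2$ on $\partial\Omega$ in the trace sense. We then extend both conductivities to $\mathbb{R}^n$ so that $\gamma_1=\gamma_2$ outside $\Omega$, $\gamma_j-1$ has compact support, and $\nabla\gamma_j\in L^n$.

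\emph{Step 2: reduction to a Schr\"odinger-type identity.} Write $u=\gamma^{-1/2}v$, so formally $\Delta v-qv=0$ with
\[
q=\gamma^{-1/2}\Delta\gamma^{1/2}=\tfrac12\Delta\log\gamma+\tfrac14|\nabla\log\gamma|^2 .
\]
Here $q\in W^{-1,n}+L^{n/2}$ is only a distribution. Equality of the Dirichlet-to-Neumann maps gives the orthogonality identity
\[
\int\bigl(\nabla\phi_1\cdot\nabla\phi_2\bigr)\,\mathrm{d}x+\int\bigl(\tfrac14|\nabla\phi_1|^2-\tfrac14|\nabla\phi_2|^2\bigr)\,\mathrm{d}x\cdot(\text{CGO products})=0,
\qquad \phi_j=\log\gamma_j .
\]
More precisely, it is the identity $\int (q_1-q_2)v_1v_2=0$ for solutions $v_j$ of the respective equations.

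\emph{Step 3: CGO solutions in Bourgain-type spaces.} We construct solutions
\[
v_j=e^{x\cdot\zeta_j}(1+\psi_j),\qquad \zeta_j\cdot\zeta_j=0,\quad |\zeta_j|\sim\tau,
\]
with $\zeta_1+\zeta_2=-ik$ for a fixed frequency $k$. The correction $\psi_j$ solves
\[
\Delta_{\zeta}\psi-q\psi=q \quad\text{in } \dot X^{1/2}_\zeta .
\]
We solve this by a Neumann series, which requires the multiplier estimate
\[
\|q\psi\|_{\dot X^{-1/2}_\zeta}\le \varepsilon\|\psi\|_{\dot X^{1/2}_\zeta}.
\]
Following Haberman--Tataru, this holds only for $\tau$ in a good set of scales and for most orientations $U\in O(n)$ of $\zeta=\tau(Ue_1-iUe_2)$. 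To obtain this, we average the quantity
\[
\|M_q\|_{\dot X^{1/2}_\zeta\to\dot X^{-1/2}_\zeta}^2
\]
over $U$ and over $\tau\in[\lambda,2\lambda]$.

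\emph{Step 4 (main obstacle): the critical averaged estimate.} Earlier work could control the average only for $n\le4$, or with extra smoothness, because the bilinear estimate loses in high dimensions. We split $q=q_{\rm low}+q_{\rm high}$ with $q_{\rm low}$ smooth. The high part has small $W^{-1,n}$ norm; to handle it we view
\[
T=|D_\zeta|^{-1/2}\,\partial\phi\,|D_\zeta|^{-1/2}
\]
as an operator on $L^2$. We bound its Hilbert--Schmidt norm, or a Schatten $S^p$ norm with $p$ adapted to $n$, averaged over orientations. We expect the Schatten norm to be expressible as an integral of $|\hat\phi(\xi)|^2$ against a kernel, and after orientation averaging that kernel should be bounded by $|\xi|^{2}\cdot|\xi|^{n-2}$-type weights, matching the $\dot W^{1,n}$ scaling via Sobolev/Lorentz embedding. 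The operator norm is dominated by the trace-ideal norm, which gives the smallness required in Step 3 on a set of $(U,\tau)$ of positive measure. The $|\nabla\phi|^2$ term lies in $L^{n/2}$ and is handled by the same scheme with a Hölder-type trace bound. This is the crucial step: if Hilbert--Schmidt is too weak when $n\ge5$, I would try interpolating between the $S^2$ and $S^\infty$ bounds.

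\emph{Step 5: conclusion.} With the good CGOs in hand, we insert them into the identity. The remainder terms
\[
\langle q\psi_1,1\rangle,\qquad \langle q\psi_1,\psi_2\rangle
\]
tend to zero along a sequence $\tau_k\to\infty$ in the good set, giving $\widehat{q_1-q_2}(k)=0$. Hence $q_1=q_2$. Setting $w=\log(\gamma_1/\gamma_2)$, this says $w$ solves the elliptic equation
\[
\Delta w+\tfrac12\nabla(\phi_1+\phi_2)\cdot\nabla w=0
\]
and vanishes outside $\Omega$. By the unique continuation/uniqueness for this divergence-form problem (it is equivalent to $\operatorname{div}((\gamma_1\gamma_2)^{1/2}\nabla w)=0$ with $w\in H^1_0$), we get $w=0$, that is, $\gamma_1=\gamma_2$ almost everywhere.
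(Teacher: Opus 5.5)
Your Steps~1, 2 and~5 match the paper's reduction and final step: boundary determination, common extension, the Liouville transform, the identity $\langle q_1-q_2,v_1v_2\rangle=0$, and the energy argument for $\operatorname{div}((\gamma_1\gamma_2)^{1/2}\nabla w)=0$. The first display of Step~2 is not meaningful, but your ``more precisely'' identity is the right one.

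\textbf{The gap is Step~4.} It is not a technicality. It is precisely the averaged multiplier estimate that, for $n\geq5$, was only available with additional smoothness, and the route you sketch cannot give the smallness your Neumann series needs.

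Read $|D_\zeta|$ as $|p_\zeta(D)|$, the $\dot X^{1/2}_\zeta$ weight. Then, up to a frequency translation by $\tau b$ (a unitary conjugation preserving all Schatten norms), your $T$ is the paper's $K_q=AqA$ with $A=|d_a(D)|^{-1/2}$. For $n\geq4$ and $q\neq0$ this operator is not Hilbert--Schmidt at all. The squared high--high kernel is comparable to $|\xi|^{-2}|\eta|^{-2}|\widehat q(\xi-\eta)|^2$, and $\int_{|\xi|>R\tau}|\xi|^{-4}\,\mathrm{d}\xi=\infty$.

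Even the low row fails to be small. Its squared Hilbert--Schmidt norm is $(2\pi)^{-n}\int|\widehat q(v)|^2I_{a,\tau}(v)\,\mathrm{d}v$, with $I_{a,\tau}$ as in Lemma~\ref{lem:convolution}. For $|v|\leq\tau$ the integrand of $I_{a,\tau}(v)$ is at least $c\tau^{-4}$ on $|\xi|\leq R\tau$. Hence $\|L_\tau K_q\|_{\mathfrak S_2}^2\geq c\,\tau^{n-4}\int_{|v|\leq\tau}|\widehat q(v)|^2\,\mathrm{d}v$ for every $a$.

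For fixed nonzero $q$ this does not tend to zero for $n=4$ and grows for $n\geq5$. That includes your $q_{\mathrm{high}}$, however small its $W^{-1,n}$ norm. The bound is uniform in the orientation, so averaging over $U$ or over $\tau\in[\lambda,2\lambda]$ yields no good set by Chebyshev. No scale-invariant weight like $|\xi|^n|\widehat\phi|^2$ appears either, and such a weight would mean $\phi\in\dot H^{n/2}\subsetneq\dot W^{1,n}$, i.e.\ extra smoothness again.

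The $\mathfrak S_p$ fallback does not help either:
\begin{itemize}
\item For $p>2$ the Schatten norm is not an integral of $|\widehat\phi|^2$ against a kernel.
\item Since $\|T\|\leq\|T\|_{\mathfrak S_p}$, any $\mathfrak S_p$ bound uniform in $\tau$ in terms of $\|\nabla\phi\|_{L^n}$ would already imply the missing operator-norm estimate, so it is at least as hard.
\item Interpolating between $\mathfrak S_2$ and $\mathfrak S_\infty$ presupposes that estimate.
\end{itemize}

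\textbf{What the paper does instead.} It never proves smallness of $K_q$.
\begin{itemize}
\item \emph{Rank counting.} The row bound $\|L_\tau K_q\|_{\mathfrak S_2}^2\leq C\tau^{n-2}\Gamma_q(\tau)$ of Proposition~\ref{prop:row}, plus operator-norm smallness of the high--high block alone, is used only to \emph{count} the singular values of $J_a+K_q$ below $1/4$. There are $o(\tau^{n-2})$ of them. This gives a partial inverse with $\|W_q\|\leq4$ (Proposition~\ref{prop:partial}).
\item \emph{Superposed amplitudes with compatibility conditions.} Instead of one CGO pair with $\zeta_1+\zeta_2=-\mathrm{i}k$, it superposes all waves $\exp(\mathrm{i}\tau b\cdot x)$, $b\in S(a^\perp)$, with amplitudes in $L^2(S(a^\perp))$. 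It imposes the finitely many linear conditions $\Pi_1^LC_1g=0$ and $\Pi_2^RC_2h=0$ to obtain exact solutions.
\item \emph{Averaged trace norm of the free response.} The information lost to these $r\leq C\tau^{n-2}D_\tau$ conditions is recovered in trace norm. Lemma~\ref{lem:compression-trace} gives $\|Q_f\|_{\mathfrak S_1}\leq\sqrt{2r}\,\|Q_f\|_{\mathfrak S_2}+\|P(N_1-N_2)P\|_{\mathfrak S_1}$. The average $\mathbb E_a\|Q_f\|_{\mathfrak S_2}^2\leq c_n\tau^{-(n-2)}\|f\|_{\dot H^{-1}}^2$ of Lemma~\ref{lem:response-average} exactly cancels the factor $\tau^{n-2}$ in $r$. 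The correction terms average to at most $CD_\tau\to0$.
\item \emph{Rotation tests.} Since $\operatorname{Tr}Q_f=\langle f,1\rangle$, each $\widehat f(k)$ is extracted by trace duality against bounded rotation tests $M_{w_a}U_{\exp(M_a/\tau)}$ (Lemma~\ref{lem:frequency}).
\end{itemize}
Without ingredients of this kind, your Step~4 restates the previously open case rather than proving it.
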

\end{samepage}

\begin{remark}\label{rem:dimension-four}
Uniqueness for $W^{1,4}$ conductivities when $n=4$ was established in \cite{haberman2015uniqueness}.
Theorem~\ref{thm:main} recovers this result and resolves the previously open case $n\geq5$.
The Fourier multiplier estimates in Proposition~\ref{prop:row} and the spherical averages in Lemmas~\ref{lem:source-average} and \ref{lem:response-average} hold for $n\geq4$, while the Fourier recovery in Lemma~\ref{lem:frequency} requires only $n\geq3$.
The reductions and trace estimates also hold for $n\geq4$.
\end{remark}

\subsection{Historical development}\label{sec:history}
We review earlier uniqueness results with emphasis on the regularity of the conductivity; see also the survey \cite{uhlmann2009electrical}.
For noninteger $s>0$ and $1<p<\infty$, we use $H^{s,p}$ for Bessel potential spaces and $W^{s,p}$ for Sobolev--Slobodeckij spaces.

For the dimension $n=2$, complex exponential solutions gave uniqueness for conductivities close to a constant \cite{sylvester1986uniqueness}.
A constructive $\bar\partial$ scattering method established global uniqueness for $W^{2,p}$ conductivities, $p>1$ \cite{nachman1996global}.
The regularity requirement was reduced to $W^{1,p}$, $p>2$, using a first order $\bar\partial$ system \cite{brown1997uniqueness}.
Beltrami equations and quasiconformal analysis then gave global uniqueness for bounded measurable uniformly elliptic conductivities \cite{astala2006calderon}.
These arguments exploit the complex structure of the plane and do not directly extend to dimensions $n\geq3$.

In dimensions $n\geq3$, early uniqueness results covered piecewise analytic conductivities \cite{kohn1985determining}.
Complex geometrical optics solutions subsequently established global uniqueness for strictly positive $C^2$ conductivities \cite{sylvester1987global}.
Boundary integral equations for these solutions provided a reconstruction method \cite{nachman1988reconstructions}.
This reconstruction method was later extended to uniformly elliptic conductivities in $H^{3/2,2n}(\Omega)$ that equal one near $\partial\Omega$, for $n\geq3$ and smooth bounded domains~\cite{tarikere2027reconstruction}.
For uniqueness, the regularity assumptions were reduced to $C^{3/2+\varepsilon}$, $\varepsilon>0$ \cite{brown1996global}, and to $H^{3/2,p}$ with $p>2n$ \cite{brown2003uniqueness}.
Uniqueness was also established for $W^{3/2,\infty}$ conductivities \cite{paivarinta2003complex}.
Later results covered $C^1$ conductivities and Lipschitz conductivities with sufficiently small $\|\nabla\log\gamma\|_{L^\infty(\Omega)}$ \cite{haberman2013uniqueness}.
The smallness assumption for Lipschitz conductivities was subsequently removed \cite{caro2016global}.
For Sobolev conductivities, uniqueness reached the critical class $W^{1,n}$ in $n=3,4$ \cite{haberman2015uniqueness}.
Using bilinear Fourier restriction estimates, uniqueness was proved for $W^{s,n}$ conductivities with $s>41/40$ in $n=5$ and $s>11/10$ in $n=6$ \cite{ham2021uniqueness}.
A further improvement established uniqueness for conductivities in $W^{1+(n-5)/(2p)+\varepsilon,p}(\Omega)$ when $n\geq5$, $n\leq p<\infty$, and $\varepsilon>0$ \cite{ponce2021bilinear}.
At $p=n$, this range still requires additional differentiability beyond $W^{1,n}$.

For \(W^{1,n}\) conductivities, the boundary values can already be recovered from the Dirichlet-to-Neumann map.
For smooth conductivities, boundary measurements determine the boundary values and derivatives \cite{kohn1984determining}.
Later, continuous dependence of the smooth boundary values on the measurements was established \cite{sylvester1988inverse}.
Solutions singular near a chosen point \cite{alessandrini1990singular} and boundary data whose solutions concentrate energy near that point \cite{brown2001recovering} were used to recover local conductivity information.
For uniformly elliptic \(W^{s,p}(\Omega)\) conductivities, boundary values are recoverable almost everywhere when \(1<p<\infty\) and \(s>1/p\), in particular for \(W^{1,n}\) \cite{ponce2020reconstruction}.
Boundary determination and Sobolev extension allow two conductivities with the same Dirichlet-to-Neumann map to be extended to agree outside \(\Omega\).

These developments have progressively lowered the regularity required for global uniqueness, reaching bounded measurable conductivities for \(n=2\) and the critical class \(W^{1,n}\) for \(n=3,4\).
For \(n\geq5\), uniqueness for the full \(W^{1,n}\) class remained open.
The present paper resolves this problem by establishing global uniqueness for uniformly elliptic \(W^{1,n}\) conductivities in these dimensions.

\subsection{The averaged trace norm method}\label{sec:methods}
To prove Theorem~\ref{thm:main} at the critical regularity $W^{1,n}$, we revisit the special solutions used to recover interior information from boundary data.
After the common exterior extension, the Liouville transformation $u=\gamma^{1/2}v$, where $v$ solves the conductivity equation~\eqref{eq:conductivity}, gives
\begin{equation}\label{eq:schrodinger-intro}
(-\Delta+q)u=0,
\qquad
q=\tfrac12\Delta\log\gamma+\tfrac14|\nabla\log\gamma|^2,
\end{equation}
in the sense of distributions.
\emph{Complex geometrical optics (CGO)} constructions seek exact solutions to \eqref{eq:schrodinger-intro} of the form $u_\zeta(x)=\exp(x\cdot\zeta)(1+\psi_\zeta(x))$, where $\zeta\in\mathbb{C}^n$ satisfies $\zeta\cdot\zeta=0$.
The leading exponentials are harmonic and can be paired to produce Fourier modes.
The correction $\psi_\zeta$ accounts for the potential $q$ and must satisfy
\begin{equation}\label{eq:cgo-correction-intro}
(\Delta+2\zeta\cdot\nabla)\psi_\zeta=q+q\psi_\zeta.
\end{equation}
Perturbative constructions solve this equation by iteration, using the previous correction in $q\psi_\zeta$ before inverting the operator on the left.
A sufficiently small norm for this linear feedback operator ensures convergence~\cite{haberman2013uniqueness,haberman2015uniqueness}.
At $W^{1,n}$ regularity, $\Delta\log\gamma$ is only a distributional derivative of the $L^n$ field $\nabla\log\gamma$.
For $n\geq5$, the available estimates controlling its contribution to this iteration still require additional differentiability beyond $W^{1,n}$~\cite{ham2021uniqueness,ponce2021bilinear}.

To overcome this obstruction, we develop an \emph{averaged trace norm method} based on two ideas.
\begin{enumerate}[label=(\roman*),leftmargin=*,itemsep=6pt]
\item \textbf{Construct exact solutions by imposing finitely many conditions.}
A uniformly bounded \emph{partial inverse} solves the correction equation~\eqref{eq:cgo-correction-intro} up to an error in a finite-dimensional space.
The dimension of this space is controlled using only the $W^{1,n}$ assumption (Proposition~\ref{prop:partial}).
For superpositions of incident waves, finitely many linear conditions on their weights cancel this error for both potentials and produce exact solutions.

\item \textbf{Use averaging to compensate for the resulting information loss.}
These conditions restrict the information obtained from the boundary identity.
We collect the pairings of the potential difference with products of incident waves into a free response operator and measure its size in trace norm.
The key estimate shows that decay from averaging over wave orientations compensates for the information excluded by the conditions, even as their number grows.
A second average controls all correction terms in the exact boundary identity.
When the Dirichlet-to-Neumann maps agree, the combined estimates make the free response on the full incident space tend to zero in averaged trace norm under the $W^{1,n}$ assumption (Proposition~\ref{prop:trace-decay}).
\end{enumerate}

To complete the proof, we use this averaged decay to recover every Fourier frequency of the potential difference (Lemma~\ref{lem:frequency}).
A weighted energy identity then gives equality of the conductivities.


\medskip
\subsection{Organization.}
Section~\ref{sec:reduction} constructs the common exterior extension, derives the weak identity for the associated potentials, and formulates the conjugated equation.
Section~\ref{sec:partial-averaging} constructs a partial inverse, bounds the dimensions of its exceptional spaces, and establishes spherical average estimates for the source and response operators.
Section~\ref{sec:uniqueness} applies the averaged trace norm method to recover the potentials and prove Theorem~\ref{thm:main}.
Appendix~\ref{app:extension} provides the Sobolev extension and gluing results used in Subsection~\ref{sec:boundary}.
Appendix~\ref{app:trace-alternatives} presents two alternative formulations of the trace estimate.

\section{Boundary reduction and the conjugated equation}\label{sec:reduction}
We transfer the measurements to a common compactly supported model and establish the weak potential identity.
We then formulate the conjugated equation on local energy spaces and realize it as a bounded operator on $L^2$.

\medskip
\noindent\textbf{Conventions and notation.}
All function spaces are complexified unless real values are specified.
Hilbert inner products are linear in the first variable, while distributional brackets are bilinear and contain no implicit conjugation.
We write $\operatorname{tr}$ for a boundary trace and $\operatorname{Tr}$ for an operator trace.
The Fourier transform is unitary:
\[
\widehat u(\xi)=(2\pi)^{-n/2}\int_{\mathbb{R}^n}\exp(-\mathrm{i}x\cdot\xi)u(x)\,\mathrm{d}x,
\qquad D=-\mathrm{i}\nabla.
\]
For compact $K$, the subscript $K$ denotes distributional support in $K$.
An integral with no indicated domain is over $\mathbb{R}^n$.
The constants in parameter estimates may depend on the fixed dimension, supports, cutoffs, and coefficients, but not on $a$ or sufficiently large $\tau$ unless stated otherwise.
All averages on spheres use probability surface measure.
The symbols $\mathfrak S_1$ and $\mathfrak S_2$ denote the trace and Hilbert--Schmidt classes, respectively.

\subsection{Boundary reduction and distributional potentials}\label{sec:boundary}
To apply the averaged trace norm method, we extend the conductivities so that they agree outside a compact set and have identical data on an outer sphere.
The construction uses boundary values recovered from concentrating harmonic functions, a common extension of the logarithms, and a variational transfer of the Dirichlet-to-Neumann form.

For the present \(W^{1,n}\) problem, the Dirichlet-to-Neumann map is defined through weak solutions \cite{mclean2000strongly,ern2021finite2}.
The trace theorem gives a bounded lifting of $H^{1/2}(\partial\Omega)$ to $H^1(\Omega)$.
Uniform ellipticity and Lax--Milgram then give a unique weak solution $u$ for each boundary value $\varphi$, with $\|u\|_{H^1(\Omega)}\leq C_{\Omega,c,C}\|\varphi\|_{H^{1/2}(\partial\Omega)}$.
The weak equation makes \eqref{eq:dn-intro} independent of the second extension; taking that extension to be a solution shows that the resulting bounded bilinear form is symmetric.

The following boundary recovery result is the case $s=1$, $p=n$ of \cite{ponce2020reconstruction}.
The boundary tests can be chosen real, and both the tests and their normalization depend only on the domain and the local coordinates.

\begin{samepage}
\begin{lemma}\label{lem:boundary}
Suppose $n\geq3$ and $\gamma\in W^{1,n}(\Omega;\mathbb{R})$ satisfies \eqref{eq:ellipticity}.
For almost every $y\in\partial\Omega$ there exist real boundary values $f_{y,t}\in H^{1/2}(\partial\Omega)$ and positive numbers $c_{y,t}$, depending only on $\Omega,y,t$ and the chosen local coordinates, such that
\begin{equation}\label{eq:boundary-recovery}
\langle\Lambda_{\gamma,\Omega}f_{y,t},f_{y,t}\rangle
=c_{y,t}\operatorname{tr}\gamma(y)+o(1),
\qquad 0<c_y\leq c_{y,t}\leq C_y<\infty
\end{equation}
as $t\downarrow0$.
Consequently, equal Dirichlet-to-Neumann maps imply equal Sobolev boundary traces for two such conductivities.
\end{lemma}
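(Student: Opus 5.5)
The plan follows the Alessandrini/Brown concentration method, as adapted to $W^{s,p}$ conductivities in \cite{ponce2020reconstruction}.

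\textbf{Setup near a boundary point.} Fix local Lipschitz coordinates near $y\in\partial\Omega$, in which $\Omega$ lies above a Lipschitz graph. Choose a real function $\eta_{y,t}$ oscillating at frequency $t^{-1}$ in a direction transversal to the boundary. A concrete choice is
\[
\eta_{y,t}(x)=\chi\bigl((x-y)/t\bigr)\,t^{-(n-2)/2}\cos\bigl(N x'\cdot e/t\bigr),
\]
with $N$ large and $\chi$ a bump. Set $f_{y,t}=\operatorname{tr}\eta_{y,t}$, and let $c_{y,t}=\int_\Omega|\nabla\eta_{y,t}|^2\,\mathrm{d}x$ after normalization. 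These depend only on $\Omega$, $y$, $t$ and the coordinates. Scaling shows $c_{y,t}$ is bounded above and below, uniformly in $t$, for points where the boundary graph is differentiable; this includes a.e.\ $y$ by Rademacher's theorem.

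\textbf{Energy comparison.} Let $u_t$ be the weak solution with data $f_{y,t}$. The Dirichlet principle gives
\[
\langle\Lambda_\gamma f,f\rangle=\min_{\operatorname{tr}v=f}\int_\Omega\gamma|\nabla v|^2\,\mathrm{d}x.
\]
The standard argument then splits into two steps.

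First, $\|\nabla(u_t-\eta_{y,t})\|_{L^2}\to0$, or at least it contributes $o(1)$ to the energy. The point is that $w=u_t-\eta_{y,t}\in H^1_0$ solves $\operatorname{div}(\gamma\nabla w)=-\operatorname{div}(\gamma\nabla\eta)$. Because $\eta$ oscillates rapidly and is concentrated, its $H^{-1}$-norm (equivalently the $L^2$ norm of $\eta$) is $O(t)$ relative to the $H^1$ normalization. This leaves
\[
\int\gamma|\nabla\eta_{y,t}|^2\,\mathrm{d}x+o(1).
\]

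Second, replace $\gamma$ by its boundary trace at $y$:
\[
\Bigl|\int(\gamma-\operatorname{tr}\gamma(y))|\nabla\eta_{y,t}|^2\,\mathrm{d}x\Bigr|\lesssim t^{-n}\int_{B_{Ct}(y)\cap\Omega}|\gamma-\operatorname{tr}\gamma(y)|\,\mathrm{d}x.
\]

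\textbf{Main obstacle.} The hardest step is showing this last average tends to zero for a.e.\ $y$. Here the bound $|\nabla\eta|^2\lesssim t^{-n}$ on $B_{Ct}$ is used, and $\gamma$ is not continuous. The approach is a Lebesgue-point property of $W^{1,n}$ functions at the boundary, relative to their trace:
\[
\lim_{t\downarrow0}\ t^{-n}\int_{B_t(y)\cap\Omega}|\gamma-\operatorname{tr}\gamma(y)|\,\mathrm{d}x=0\quad\text{for a.e. }y\in\partial\Omega.
\]
To prove it, first flatten the boundary by a bi-Lipschitz map, which preserves $W^{1,n}$. Then write
\[
\gamma(x',x_n)-\operatorname{tr}\gamma(x')=\int_0^{x_n}\partial_n\gamma\,\mathrm{d}s.
\]
This bounds the average by
\[
t^{-(n-1)}\int_{B'_t(y')}\int_0^{t}|\nabla\gamma|\,\mathrm{d}s\,\mathrm{d}x' + \text{(Lebesgue-point error of }\operatorname{tr}\gamma\in L^1(\partial\Omega)).
\]
The first term is at most
\[
t\cdot M\Bigl(\int_0^{\delta}|\nabla\gamma|\,\mathrm{d}s\Bigr)(y')\cdot o(1)
\]
in the sense of a maximal function argument. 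A density argument then gives a.e.\ convergence: approximate $\gamma$ by smooth functions and use the weak $(1,1)$ bound for the boundary maximal function of $\int_0^t t^{-1}|\nabla(\gamma-\gamma_k)|$.

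\textbf{Conclusion.} Combining the two steps gives \eqref{eq:boundary-recovery}. If $\Lambda_{\gamma_1}=\Lambda_{\gamma_2}$, the left sides coincide. Dividing by $c_{y,t}\ge c_y>0$ and letting $t\downarrow0$ yields $\operatorname{tr}\gamma_1(y)=\operatorname{tr}\gamma_2(y)$ for a.e.\ $y$. Since the tests are real and independent of $\gamma$, the same $f_{y,t}$ serve both conductivities.
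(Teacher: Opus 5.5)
Your overall architecture (real concentrating boundary tests, the Dirichlet principle, then a boundary Lebesgue-point property of $\gamma$ relative to its trace) is the Brown-type scheme behind \cite{ponce2020reconstruction}, which the paper simply cites for this lemma. However, your first step fails for the test function you chose. With $w=\eta_{y,t}-u_t\in H^1_0(\Omega)$, the weak equation gives
\[
\int_\Omega\gamma|\nabla\eta_{y,t}|^2\,\mathrm{d}x-\langle\Lambda_{\gamma,\Omega}f_{y,t},f_{y,t}\rangle=\int_\Omega\gamma|\nabla w|^2\,\mathrm{d}x .
\]
The right side is comparable, with constants depending on $c,C$, to $\|\operatorname{div}(\gamma\nabla\eta_{y,t})\|_{H^{-1}(\Omega)}^2$. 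It is this residual that must be small; the size of $\eta_{y,t}$ itself in $L^2$ or $H^{-1}$ plays no role.

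For your $\eta_{y,t}$, $\Delta\eta_{y,t}=-(N/t)^2\eta_{y,t}$ plus cutoff terms. So even for $\gamma\equiv1$ and a flat boundary, the residual has the same size as $\|\nabla\eta_{y,t}\|_2$. The factor $t^{-(n-2)/2}$ makes the Dirichlet energy scale invariant, so the defect is bounded below independently of $t$. For large $N$ it is most of the energy: the harmonic extension of $\chi\cos(Nx_1)$ has energy of order $N$, while $\chi\cos(Nx_1)$ itself has energy of order $N^2$. Thus \eqref{eq:boundary-recovery} with $c_{y,t}=\int_\Omega|\nabla\eta_{y,t}|^2$ is false already for constant conductivities.

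The missing idea is that the tests must be approximate solutions up to the boundary. The tangential oscillation therefore has to be paired with exponential decay in the normal direction. At a point $y$ where $\partial\Omega$ is differentiable, with outward normal $\nu$ and unit tangent $\alpha$, take the real part of the harmonic function $\exp(N(\mathrm{i}\alpha+\nu)\cdot(x-y)/t)$, cut off at scale $t$:
\[
\eta_{y,t}(x)=t^{-(n-2)/2}N^{-1/2}\chi\bigl((x-y)/t\bigr)\exp\bigl(N\nu\cdot(x-y)/t\bigr)\cos\bigl(N\alpha\cdot(x-y)/t\bigr).
\]
Let $N=N(t)\to\infty$ slowly enough that $\partial\Omega\cap B_{Ct}(y)$ stays within $O(t/N)$ of the tangent plane, so the exponential stays bounded on $\Omega\cap B_{Ct}(y)$. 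Alessandrini's singular solutions are an alternative.

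With this choice, $\Delta\eta_{y,t}$ consists only of cutoff terms, which tend to zero in $H^{-1}(\Omega)$. Write $\operatorname{div}(\gamma\nabla\eta)=\operatorname{div}((\gamma-\operatorname{tr}\gamma(y))\nabla\eta)+\operatorname{tr}\gamma(y)\Delta\eta$. The first term has $H^{-1}$ norm at most $\|(\gamma-\operatorname{tr}\gamma(y))\nabla\eta\|_{L^2}$, and boundedness of $\gamma$ reduces its square to $\int_\Omega|\gamma-\operatorname{tr}\gamma(y)|\,|\nabla\eta_{y,t}|^2$. The same quantity controls the replacement of $\gamma$ by $\operatorname{tr}\gamma(y)$ in the energy. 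With $c_{y,t}=\int_\Omega|\nabla\eta_{y,t}|^2$ for these tests (now bounded above and below), the formula follows.

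What remains is your second step, with the weight $Nt^{-n}\exp(2N\nu\cdot(x-y)/t)$ in place of $t^{-n}$. That step is sound, and simpler than your maximal-function sketch. After flattening, $|\gamma(x',x_n)-\operatorname{tr}\gamma(x')|\le\int_0^{x_n}|\nabla\gamma(x',s)|\,\mathrm{d}s$ bounds the normal part by $Ct^{-(n-1)}\int_{B_{Ct}(y)\cap\Omega}|\nabla\gamma|\le C\|\nabla\gamma\|_{L^n(B_{Ct}(y)\cap\Omega)}$. By H\"older this tends to zero at every $y$. Only the tangential part requires a Lebesgue point of $\operatorname{tr}\gamma$ on $\partial\Omega$.
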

\end{samepage}

We now construct a common exterior extension using standard Sobolev trace and extension results \cite{ern2021finite,rychkov1999restrictions,adams2003sobolev}.
Set $h_j=\tfrac12\log\gamma_j$ in $\Omega$.
The Sobolev chain rule and uniform ellipticity give $h_j\in W^{1,n}(\Omega)\cap L^\infty(\Omega)$.
By Lemma~\ref{lem:boundary}, their traces agree, so $h_1-h_2\in W^{1,n}_0(\Omega)$.
Extend $h_2$ to all of $\mathbb{R}^n$, truncate its values to the interval
\[
\left[\min\{0,\tfrac12\log c\},\ \max\{0,\tfrac12\log C\}\right],
\]
and multiply by a smooth cutoff equal to one on a neighborhood of $\overline\Omega$.
Call the result $\widetilde h_2$.
The zero extension $(h_1-h_2)_0$ belongs to $W^{1,n}(\mathbb{R}^n)$, and
\begin{equation}\label{eq:common-extension}
\widetilde h_1=\widetilde h_2+(h_1-h_2)_0
\end{equation}
has the required value on $\Omega$ and agrees with $\widetilde h_2$ outside it.
Both functions are bounded, real, and compactly supported.
Checking separately inside and outside $\Omega$ gives uniform positive lower and upper bounds for $\exp(2\widetilde h_j)$.
Subsections~\ref{app:extension-bounded} and \ref{app:extension-zero} record the extension and zero-extension facts used in \eqref{eq:common-extension}.
We also use the following interface property: if $\Omega\Subset B$ with $B$ a ball, then $H^1$ functions on $\Omega$ and $B\setminus\overline\Omega$ with matching traces on $\partial\Omega$ define an $H^1(B)$ function.
Subsection~\ref{app:extension-gluing} proves this property.

For $\xi\in H^{1/2}(\partial G;\mathbb{R})$ on a bounded Lipschitz domain $G$, write
\[
\mathcal A_G(\xi)=\{v\in H^1(G;\mathbb{R}):\operatorname{tr}v=\xi\}.
\]
The minimum of $\int_G\gamma|\nabla v|^2$ over this class is attained uniquely by the weak solution, and equals $\langle\Lambda_{\gamma,G}\xi,\xi\rangle$.
Indeed, writing $v=u+\phi$ with $\phi\in H^1_0(G)$, the weak equation cancels the cross term and the energy difference is $\int_G\gamma|\nabla\phi|^2$.

\begin{lemma}\label{lem:gluing}
Let $\Omega\Subset B$, where $B$ is a ball and $\Omega$ is a bounded Lipschitz domain.
Suppose two real uniformly elliptic conductivities on $B$ coincide on $D=B\setminus\overline\Omega$ and have equal Dirichlet-to-Neumann maps on $\Omega$.
Then their Dirichlet-to-Neumann maps on $B$ coincide.
\end{lemma}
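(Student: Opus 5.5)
The plan is to use the variational characterization recorded just before the statement: for real $\xi\in H^{1/2}(\partial B)$, $\langle\Lambda_{\gamma,B}\xi,\xi\rangle$ is the minimum of the energy $\int_B\gamma|\nabla v|^2$ over $\mathcal A_B(\xi)$. We split this minimization into an interior and an exterior part. Any $v\in\mathcal A_B(\xi)$ restricts to $v|_\Omega\in H^1(\Omega)$ and $v|_D\in H^1(D)$, with a common trace $\eta=\operatorname{tr}_{\partial\Omega}v\in H^{1/2}(\partial\Omega;\mathbb{R})$. Conversely, the interface property (Subsection~\ref{app:extension-gluing}) glues any $w\in\mathcal A_\Omega(\eta)$ with any $z\in H^1(D)$ having trace $\eta$ on $\partial\Omega$ and $\xi$ on $\partial B$ into an element of $\mathcal A_B(\xi)$. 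Hence
\[
\langle\Lambda_{\gamma,B}\xi,\xi\rangle=\inf_{z}\Bigl(\int_D\gamma|\nabla z|^2+\langle\Lambda_{\gamma,\Omega}\eta_z,\eta_z\rangle\Bigr),
\]
where the infimum is over real $z\in H^1(D)$ with outer trace $\xi$, and $\eta_z$ denotes the inner trace of $z$. The inner minimization over $w$ with $z$ fixed is exactly the energy characterization on $\Omega$.

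For $\gamma_1,\gamma_2$, the exterior term is identical because $\gamma_1=\gamma_2$ on $D$. The interior term is identical because $\Lambda_{\gamma_1,\Omega}=\Lambda_{\gamma_2,\Omega}$. So the two quadratic forms $\langle\Lambda_{\gamma_j,B}\xi,\xi\rangle$ agree for all real $\xi$. Both maps are symmetric bounded bilinear forms, as noted earlier in the paper. Polarization, $\langle\Lambda\varphi,\psi\rangle=\tfrac14(\langle\Lambda(\varphi+\psi),\varphi+\psi\rangle-\langle\Lambda(\varphi-\psi),\varphi-\psi\rangle)$, then gives equality on real pairs. Complex-bilinear extension gives equality on the complexified spaces.

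The main technical point is the decomposition step. We must check that the trace on $\partial\Omega$ of $v|_D$ coincides with that of $v|_\Omega$ for $v\in H^1(B)$. This holds because both equal the trace of $v$ on the Lipschitz hypersurface $\partial\Omega$, which can be seen by approximating $v$ by smooth functions in $H^1(B)$. We must also check that $D$ is Lipschitz, so that the traces of $z$ are well defined. This is true since $\partial\Omega$ and $\partial B$ are disjoint Lipschitz boundaries.

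With these two facts, the infimum identity above holds for each conductivity, and the argument is complete. Minimizers exist but are not even needed, since only the equality of infima is used.
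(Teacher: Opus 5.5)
Your proposal is correct and follows essentially the same route as the paper's proof. The paper also writes $\langle\Lambda_{\gamma_j,B}\eta,\eta\rangle$ as an infimum over $w\in H^1(D;\mathbb{R})$ with $\operatorname{tr}_{\partial B}w=\eta$ of the common exterior energy plus $\langle\Lambda_{\gamma_j,\Omega}t,t\rangle$, proving one inequality by restriction and the other by gluing to the interior minimizer, and concludes by polarization and complex-bilinear extension; your checks on matching traces across $\partial\Omega$ correspond to the converse half of the gluing property in Subsection~\ref{app:extension-gluing}.
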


\begin{proof}
Write $\gamma_{\mathrm{ext}}$ for the common conductivity on $D$.
Energy minimization and the interface gluing property in Subsection~\ref{app:extension-gluing} give, for real $\eta\in H^{1/2}(\partial B)$,
\begin{equation}\label{eq:outer-energy}
\langle\Lambda_{\gamma_j,B}\eta,\eta\rangle
=\inf_{\substack{w\in H^1(D;\mathbb{R})\\\operatorname{tr}_{\partial B}w=\eta}}
\left\{\int_D\gamma_{\mathrm{ext}}|\nabla w|^2\,\mathrm{d}x
+\langle\Lambda_{\gamma_j,\Omega}t,t\rangle\right\},
\end{equation}
where $t=\operatorname{tr}_{\partial\Omega}w$.
Indeed, restriction of an admissible function on $B$ gives one inequality, and gluing $w$ to the interior energy minimizer with trace $t$ gives the reverse inequality.
The right side is independent of $j$; polarization and complex bilinear extension prove the assertion.
\end{proof}

Choose $B$ large enough that the logarithmic extensions vanish near $\partial B$.
Dropping the tildes from now on, the problem has been reduced to
\begin{equation}\label{eq:compact-model}
\begin{gathered}
h_j\in W^{1,n}(\mathbb{R}^n)\cap L^\infty(\mathbb{R}^n),\quad h_j\text{ real},\\
\operatorname{supp}h_j\subset K\Subset B,\qquad
\Lambda_{\exp(2h_1),B}=\Lambda_{\exp(2h_2),B}.
\end{gathered}
\end{equation}
In particular, $\exp(2h_j)-1\in W^{1,n}(\mathbb{R}^n)$.

The critical integrability scales $L^n$ for first-order terms and $L^{n/2}$ for zeroth-order terms also arise in unique continuation estimates for rough elliptic equations~\cite{jerison1985unique,koch2001carleman}.

We interpret the potentials associated with the logarithms in \eqref{eq:compact-model} by bounded bilinear forms on local energy functions.

Let $F\in L^n_K(\mathbb{R}^n;\mathbb{R}^n)$, $V\in L^{n/2}_K(\mathbb{R}^n;\mathbb{R})$, and $q=\operatorname{div}F+V$.
For $u,v\in H^1_{\mathrm{loc}}$, define
\begin{equation}\label{eq:q-form}
\langle q,uv\rangle
=-\int F\cdot(u\nabla v+v\nabla u)\,\mathrm{d}x
+\int Vuv\,\mathrm{d}x.
\end{equation}
If $K\Subset B_1$, localization and the Sobolev inequality give
\begin{equation}\label{eq:form-bound}
|\langle q,uv\rangle|
\leq C_{K,B_1}(\|F\|_n+\|V\|_{n/2})
\|u\|_{H^1(B_1)}\|v\|_{H^1(B_1)}.
\end{equation}
To obtain this estimate, choose $\chi\in C_c^\infty(B_1)$ with $\chi=1$ near $K$ and apply the Sobolev inequality to $\chi u$ and $\chi v$.
With $2^*=2n/(n-2)$, it gives $\|u\|_{L^{2^*}(K)}\leq C\|u\|_{H^1(B_1)}$ and the same bound for $v$.
The three terms in \eqref{eq:q-form} then satisfy
\[
\begin{aligned}
\left|\int F\cdot u\nabla v\right|
&\leq\|F\|_n\|u\|_{L^{2^*}(K)}\|\nabla v\|_{L^2(K)},\\
\left|\int F\cdot v\nabla u\right|
&\leq\|F\|_n\|v\|_{L^{2^*}(K)}\|\nabla u\|_{L^2(K)},\\
\left|\int Vuv\right|
&\leq\|V\|_{n/2}\|u\|_{L^{2^*}(K)}\|v\|_{L^{2^*}(K)}.
\end{aligned}
\]
No boundary values of $u$ or $v$ are used, because the cutoff provides the global Sobolev functions to which the inequality applies.

The definition agrees with the usual distributional pairing for smooth functions and extends continuously by density.
To verify that the form depends only on $q$, suppose two such decompositions represent the same distribution and denote their forms by $B_1$ and $B_2$.
For smooth $u,v$, distributional equality gives $B_1(u,v)=B_2(u,v)$.
Choose smooth approximations $u_m\to u$ and $v_m\to v$ in $H^1$ on a neighborhood of $K$, using a support cutoff before mollification.
For either form, bilinearity and \eqref{eq:form-bound} give
\[
\begin{aligned}
|B_j(u_m,v_m)-B_j(u,v)|
&\leq |B_j(u_m-u,v_m)|+|B_j(u,v_m-v)|\\
&\leq C_j\big(\|u_m-u\|_{H^1}\|v_m\|_{H^1}
+\|u\|_{H^1}\|v_m-v\|_{H^1}\big)\longrightarrow0.
\end{aligned}
\]
The approximating sequences are bounded in $H^1$, so both terms vanish.
Passing to the limit in $B_1(u_m,v_m)=B_2(u_m,v_m)$ proves equality for arbitrary local energy functions.
In particular, for local $H^1$ functions $u$, the product $qu$ is a well-defined element of $H^{-1}_K$.
To verify this assertion, insert a fixed cutoff equal to one near $K$ in the test function and apply \eqref{eq:form-bound}; tests supported away from $K$ give zero.

The potentials also have a finite homogeneous negative Sobolev norm.
For $n\geq4$, compact support gives $F\in L^2$ and $V\in L^1\cap L^2$, so $q\in H^{-1}$.
The high frequency part of
\begin{equation}\label{eq:homogeneous-norm}
\|q\|_{\dot H^{-1}}^2
=\int\frac{|\widehat q(\xi)|^2}{|\xi|^2}\,\mathrm{d}\xi
\end{equation}
is controlled by the square of the inhomogeneous norm $\|q\|_{H^{-1}}$.
At low frequencies, a cutoff $\chi=1$ near $K$ gives
\[
|\widehat q(\xi)|
\leq (2\pi)^{-n/2}\|q\|_{H^{-1}}
\|\chi\exp(-\mathrm{i}x\cdot\xi)\|_{H^1}
\leq C_K\|q\|_{H^{-1}}\quad(|\xi|\leq1).
\]
Since $|\xi|^{-2}$ is locally integrable in dimension greater than two, \eqref{eq:homogeneous-norm} is finite.
The same argument applies to every compactly supported $H^{-1}$ distribution.
The operator and spherical average estimates below use the following weighted Fourier integral, for $\tau\geq2$:
\begin{equation}\label{eq:gamma-intro}
\Gamma_q(\tau)=\int_{\mathbb{R}^n}
\frac{|\widehat q(\xi)|^2}{|\xi|^2+\tau|\xi|}\,\mathrm{d}\xi.
\end{equation}
The quantity $\Gamma_q(\tau)$ is finite and tends to zero as $\tau\to\infty$ by dominated convergence.

For a logarithm $h$ as in \eqref{eq:compact-model}, let $q_h=\Delta h+|\nabla h|^2$.
Multiplication by $\exp(\pm h)$ is a bounded automorphism of $H^1(B)$.
Indeed,
\[
\nabla(\exp(\pm h)u)
=\exp(\pm h)\nabla u\pm\exp(\pm h)u\nabla h,
\]
and the second term is in $L^2$ by the exponents $2^*,n$.
The multipliers are inverse to one another, equal one near the outer boundary, and preserve $H^1_0(B)$.
Expanding the gradients in the following identity is therefore legitimate:
\begin{equation}\label{eq:liouville}
\int_B\nabla u\cdot\nabla v\,\mathrm{d}x+\langle q_h,uv\rangle
=\int_B\exp(2h)\nabla(\exp(-h)u)\cdot\nabla(\exp(-h)v)\,\mathrm{d}x.
\end{equation}
Indeed, the Sobolev chain and product rules give
\[
\exp(h)\nabla(\exp(-h)u)=\nabla u-u\nabla h,
\qquad
\exp(h)\nabla(\exp(-h)v)=\nabla v-v\nabla h.
\]
Their dot product expands to
\[
\nabla u\cdot\nabla v
-\nabla h\cdot(u\nabla v+v\nabla u)
+|\nabla h|^2uv.
\]
The last two terms are precisely the form of $q_h=\operatorname{div}(\nabla h)+|\nabla h|^2$ in \eqref{eq:q-form}.
Both are integrable by \eqref{eq:form-bound}, so the identity remains meaningful without assigning a pointwise value to $\Delta h$.
Identity \eqref{eq:liouville} first holds for smooth arguments and extends to $H^1$ by \eqref{eq:form-bound}.
To verify the correspondence between weak solutions, let $u=\exp(h)v$ and test the Schr\"odinger equation with $\exp(h)\phi$, where $\phi\in H^1_0(B)$.
The right side of \eqref{eq:liouville} becomes $\int_B\exp(2h)\nabla v\cdot\nabla\phi$.
Conversely, every Schr\"odinger test function has this form because multiplication by $\exp(h)$ is an automorphism of $H^1_0(B)$.

Changing both the unknown and the test function proves that $u$ solves $(-\Delta+q_h)u=0$ weakly precisely when $\exp(-h)u$ solves the conductivity equation.
This equivalence also establishes unique solvability of the Schr\"odinger Dirichlet problem in the present class.

Define its boundary form using the left side of \eqref{eq:liouville} and any extension of the second boundary value.
Because $h$ vanishes near $\partial B$, the traces are unchanged and the form equals $\Lambda_{\exp(2h),B}$.

Let $q_j=q_{h_j}$ and $f=q_1-q_2$.
The equal boundary forms imply the integral identity
\begin{equation}\label{eq:alessandrini}
\langle f,u_1u_2\rangle=0
\end{equation}
for every pair $u_j\in H^1(B)$ solving $(-\Delta+q_j)u_j=0$ in $B$.
To see this, write the first form using $u_1$ as solution and $u_2$ as extension, then write the second using $u_2$ as solution and $u_1$ as extension.
Symmetry of the boundary forms cancels their boundary pairings, while the gradient terms cancel in the difference.
All products in \eqref{eq:alessandrini} are interpreted by \eqref{eq:q-form}.
The following analysis develops the conjugated operators and estimates needed to construct exact solutions for this identity.

\subsection{Conjugated operators on local energy spaces}\label{sec:operators}
The potential form in Subsection~\ref{sec:boundary} acts on local $H^1$ functions, whereas spectral counting requires a bounded operator on a single Hilbert space.
The Fourier multipliers on either side of the potential realize this form as a bounded operator.

Uniform Sobolev estimates give bounds for the free conjugated equation~\cite{kenig1987uniform}, while Carleman estimates provide a construction for Lipschitz conductivities without a smallness assumption~\cite{caro2016global}.
Bilinear and multilinear Fourier restriction estimates exploit transversality to control products of waves~\cite{tao2003sharp,bennett2006multilinear}.

Fix $\tau\geq2$ and $a\in S^{n-1}$, and put
\begin{equation}\label{eq:symbol-intro}
d_a(\xi)=|\xi|^2-\tau^2-2\mathrm{i}\tau a\cdot\xi,
\qquad A=|d_a(D)|^{-1/2},\qquad J_a=d_a(D)/|d_a(D)|.
\end{equation}
The differential expression associated with $d_a$ is
\[
d_a(D)=-\Delta-2\tau a\cdot\nabla-\tau^2
=\exp(-\tau a\cdot x)(-\Delta)\exp(\tau a\cdot x).
\]
Thus restoring the factor $\exp(\tau a\cdot x)$ converts $(d_a(D)+q)v=0$ into the Schr\"odinger equation.
For a unit vector $b\perp a$, the incident oscillation $\exp(\mathrm{i}\tau b\cdot x)$ corresponds to the complex phase $\zeta=\tau(a+\mathrm{i}b)$, with $\zeta\cdot\zeta=0$.
The coefficients of $d_a(D)$ are real, although its Fourier symbol is complex.
Its zero set $\Sigma_{a,\tau}=\{a\cdot\xi=0,\ |\xi|=\tau\}$ has Lebesgue measure zero.
Define $d_a/|d_a|$ arbitrarily with modulus one there.
Then $J_a$ is unitary, $J_a^*=J_{-a}$, and $|d_{-a}|=|d_a|$.
The symbol of $A$ is real and even.

At unit scale write $\xi=xa+\rho\theta$, with $\theta\in S(a^\perp)$, and set $u=\rho-1$.
The distance to $\Sigma_{a,1}$ is $\delta=(x^2+u^2)^{1/2}$, while
\[
d_{a,1}(\xi)=2u+u^2+x^2-2\mathrm{i}x.
\]
For $\delta<s_0<1$, the triangle inequality gives
\begin{equation}\label{eq:normal-symbol}
(2-s_0)\delta\leq|d_{a,1}(\xi)|\leq(2+s_0)\delta.
\end{equation}
The volume element is $\rho^{n-2}\,\mathrm{d}x\,\mathrm{d}\rho\,\mathrm{d}S(\theta)$.
Consequently $|d_{a,1}|^{-1}$ is integrable near the characteristic sphere, because its integral in the two normal coordinates is bounded by a constant times $\int_0^{s_0}\mathrm{d}r$.
Away from that sphere the symbol has a positive lower bound on each fixed compact set.

\begin{lemma}\label{lem:A-maps}
For fixed bounded $B_1$ and compact $K$, the Fourier multiplier $A$ has continuous realizations
\begin{equation}\label{eq:A-maps}
A:L^2(\mathbb{R}^n)\longrightarrow H^1(B_1),
\qquad
A:H^{-1}_K(\mathbb{R}^n)\longrightarrow L^2(\mathbb{R}^n).
\end{equation}
Their norms are bounded by $C_{n,K,B_1}(1+\tau^{n/2})$, uniformly in $a$.
\end{lemma}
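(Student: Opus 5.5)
The plan is to prove the two mapping properties in \eqref{eq:A-maps} by duality, with a single weighted Plancherel bound doing the work. The key input is an integrability estimate for the symbol: with $w(\xi)=(1+|\xi|^2)/|d_a(\xi)|$, I claim
\[
\sup_{|\xi|\leq R}\ \text{or more precisely}\ \int_{\mathbb{R}^n}\frac{\mathrm{d}\xi}{|d_a(\xi)|\,(1+|\xi|^2)^{s}}<\infty
\]
for suitable $s$, together with the pointwise bound $|d_a(\xi)|\geq \tfrac12|\xi|^2$ for $|\xi|\geq 2\tau$. Rather than work globally, I will localize. For the first map, fix $\chi\in C_c^\infty$ with $\chi=1$ on $B_1$. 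It suffices to bound $\|\chi Ag\|_{H^1}$ for $g\in L^2$. For the second map, let $\chi=1$ near $K$. An element $f\in H^{-1}_K$ satisfies $f=\chi f$, and by duality $\|Af\|_{L^2}=\sup\langle f,\chi A\phi\rangle$ over $\|\phi\|_{L^2}\leq1$, since $A$ has a real even symbol and is therefore symmetric under the bilinear pairing. So the second statement follows from the first, applied to the operator $\phi\mapsto\chi A\phi$ from $L^2$ to $H^1$. This reduces everything to the single estimate $\|\chi A g\|_{H^1}\leq C(1+\tau^{n/2})\|g\|_2$.

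To prove that estimate, split $g$ in frequency with a smooth cutoff $\eta$ equal to one on $|\xi|\leq 4\tau$.

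\emph{High frequencies.} On $|\xi|\geq 4\tau$ we have $|d_a(\xi)|\geq |\xi|^2-\tau^2\geq c|\xi|^2$. Hence $(1-\eta(D))A$ maps $L^2\to H^1(\mathbb{R}^n)$ with norm $O(1)$, and multiplying by $\chi$ preserves this.

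\emph{Low frequencies.} The piece $A\eta(D)g$ has Fourier transform supported in $|\xi|\leq4\tau$, so $\|\chi A\eta(D) g\|_{H^1}\leq C\|\langle D\rangle(\chi\,\cdot)\|$. I will bound it through $\|\chi A\eta g\|_{H^1}\leq C_\chi\|A\eta g\|_{L^\infty}$-type estimates. Concretely, by Cauchy--Schwarz,
\[
|A\eta(D)g(x)|\leq (2\pi)^{-n/2}\Big(\int_{|\xi|\leq4\tau}|d_a(\xi)|^{-1}\mathrm{d}\xi\Big)^{1/2}\|g\|_2 ,
\]
and the same bound holds for the gradient with an extra factor of $4\tau$. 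Since $\chi$ has compact support, this controls the $H^1$ norm by $C(1+\tau)\big(\int_{|\xi|\leq4\tau}|d_a|^{-1}\big)^{1/2}\|g\|_2$.

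\emph{Scaling the integral.} Substituting $\xi=\tau\eta'$ gives $d_a(\tau\eta')=\tau^2 d_{a,1}(\eta')$, so
\[
\int_{|\xi|\leq4\tau}|d_a|^{-1}=\tau^{n-2}\int_{|\eta'|\leq4}|d_{a,1}|^{-1}.
\]
The last integral is finite, and by rotation invariance it is independent of $a$. Finiteness comes from \eqref{eq:normal-symbol} and the computation after it: near $\Sigma_{a,1}$ the integrand is comparable to $\delta^{-1}$, which is integrable in the two normal coordinates. Away from $\Sigma_{a,1}$, the symbol $|d_{a,1}|$ is bounded below on the compact set $|\eta'|\leq 4$. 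This yields the bound $C(1+\tau)\tau^{(n-2)/2}=C(1+\tau^{n/2})$, as stated.

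The main obstacle is exactly this low-frequency integral near the characteristic sphere. Everything else is routine, so the plan relies on the normal-coordinate estimate \eqref{eq:normal-symbol} to show that $|d_{a,1}|^{-1}\in L^1_{\mathrm{loc}}$ uniformly in $a$. A secondary point is making the duality rigorous: I will check that $\chi A$ is defined on $H^{-1}_K$ by density of $C_c^\infty$, and that the realizations agree with the Fourier multiplier on Schwartz functions.
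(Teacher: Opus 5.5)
Your treatment of the first mapping is essentially the paper's. You split into low and high frequencies, bound the low part pointwise by Cauchy--Schwarz using $\int_{|\xi|\lesssim\tau}|d_a|^{-1}\,\mathrm{d}\xi\leq C\tau^{n-2}$ and the scaled normal-coordinate estimate, and apply Plancherel with $|d_a(\xi)|\gtrsim|\xi|^2$ to the high part. The paper packages the gradient factor as $\int(1+|\xi|^2)/|d_a|\leq C\tau^n$. One small slip: a smooth $\eta$ equal to one on $|\xi|\leq4\tau$ is not supported in that ball. The low-frequency integral must therefore run over $\operatorname{supp}\eta$ (say $|\xi|\leq8\tau$). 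Since nothing is differentiated on the Fourier side, a sharp cutoff as in the paper works just as well.

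The genuine difference is the second mapping. The paper proves it directly. A support cutoff gives $|\widehat s(\xi)|\leq C_K(1+|\xi|)\|s\|_{H^{-1}}$, and the same symbol integral then shows that $|d_a|^{-1/2}\widehat s\in L^2$ outright. So $As$ is an $L^2$ function with no identification step, and that Fourier bound is reused elsewhere, e.g.\ for the $\dot H^{-1}$ finiteness. You instead realize it as the adjoint of $\chi A:L^2\to H^1(\mathbb{R}^n)$, using $s=\chi s$ and the real even symbol. This is more economical: one estimate gives both bounds with the same constant. It also produces the duality identity \eqref{eq:A-duality} along the way, whereas the paper derives that identity afterwards from the lemma.

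The price is the identification you defer. For a merely Schwartz $\phi$, $A\phi$ is not Schwartz, because the symbol is singular on $\Sigma_{a,\tau}$. So $\int(As)\phi=\langle s,A\phi\rangle$ is not automatic for distributional $s$. It does hold by Fubini when $s\in C_c^\infty$, since then $s\in L^1$ and $|d_a|^{-1/2}\widehat\phi\in L^1$. For general $s$, mollify $s\in H^{-1}_K$ inside a slightly larger fixed compact set on which $\chi=1$. Use the resulting uniform $L^2$ bound to get an $L^2$ limit, and identify it with $\mathcal F^{-1}(|d_a|^{-1/2}\widehat s)$ through an a.e.\ limit on the Fourier side. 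Alternatively, test against $\phi$ with $\widehat\phi\in C_c^\infty$ disjoint from $\Sigma_{a,\tau}$, as the paper does after the lemma. With that step supplied, your argument is complete.
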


\begin{proof}
Fix $R>2$.
Since $d_a(\tau\eta)=\tau^2d_{a,1}(\eta)$, the preceding integrability gives
\begin{equation}\label{eq:local-symbol-integral}
\int_{|\xi|\leq R\tau}\frac{1+|\xi|^2}{|d_a(\xi)|}\,\mathrm{d}\xi
\leq C_{n,R}\tau^n.
\end{equation}
For $z\in L^2$, write the low frequency part as
\[
u_{\mathrm{lo}}(x)=(2\pi)^{-n/2}\int_{|\xi|\leq R\tau}
\exp(\mathrm{i}x\cdot\xi)|d_a(\xi)|^{-1/2}\widehat z(\xi)\,\mathrm{d}\xi.
\]
For a multi-index $\beta$ with $|\beta|\leq1$, Cauchy--Schwarz gives
\[
|\partial^\beta u_{\mathrm{lo}}(x)|
\leq (2\pi)^{-n/2}
\left(\int_{|\xi|\leq R\tau}\frac{|\xi|^{2|\beta|}}{|d_a(\xi)|}\,\mathrm{d}\xi\right)^{1/2}\|z\|_2
\leq C\tau^{n/2}\|z\|_2.
\]
The integrands defining $u_{\mathrm{lo}}$ and its first derivatives are absolutely integrable.
Differentiation under the integral is thus justified by the same majorants, and the displayed derivatives are also its distributional derivatives.
Integrating this pointwise bound over $B_1$ gives the required local estimate.
For $|\xi|>R\tau$,
\begin{equation}\label{eq:elliptic-symbol}
|d_a(\xi)|\geq |\xi|^2-\tau^2
\geq(1-R^{-2})|\xi|^2,
\end{equation}
so Plancherel gives, for $u_{\mathrm{hi}}=\mathcal F^{-1}(\mathbf1_{\{|\xi|>R\tau\}}|d_a|^{-1/2}\widehat z)$,
\[
\|u_{\mathrm{hi}}\|_{H^1(\mathbb{R}^n)}^2
=\int_{|\xi|>R\tau}\frac{1+|\xi|^2}{|d_a(\xi)|}|\widehat z(\xi)|^2\,\mathrm{d}\xi
\leq C_R\|z\|_2^2.
\]
Here $\tau\geq2$ bounds $(1+|\xi|^2)/|\xi|^2$ uniformly on the integration region.
Adding the low and high frequency estimates proves the first map in \eqref{eq:A-maps}.
This construction defines $Az$ as a distribution without asserting its global $L^2$ integrability.

If $s\in H^{-1}_K$, inserting a smooth cutoff on its support gives
\[
|\widehat s(\xi)|\leq C_K(1+|\xi|)\|s\|_{H^{-1}}.
\]
This bound and \eqref{eq:elliptic-symbol} give
\[
\begin{aligned}
\int_{|\xi|\leq R\tau}\frac{|\widehat s(\xi)|^2}{|d_a(\xi)|}\,\mathrm{d}\xi
&\leq C_K\|s\|_{H^{-1}}^2
\int_{|\xi|\leq R\tau}\frac{1+|\xi|^2}{|d_a(\xi)|}\,\mathrm{d}\xi,\\
\int_{|\xi|>R\tau}\frac{|\widehat s(\xi)|^2}{|d_a(\xi)|}\,\mathrm{d}\xi
&\leq C_R\int\frac{|\widehat s(\xi)|^2}{1+|\xi|^2}\,\mathrm{d}\xi.
\end{aligned}
\]
By \eqref{eq:local-symbol-integral}, the first integral is finite, so $|d_a|^{-1/2}\widehat s$ belongs to $L^2$, and its inverse Fourier transform defines $As$ without an extension choice.
The support cutoff is used only to estimate $\widehat s$; two cutoffs equal to one near $K$ yield the same distribution $s$ and hence the same multiplier realization.
This proves the second mapping property and its norm estimate.

The proof is complete.
\end{proof}

For a real potential $q=\operatorname{div}F+V$ with $F\in L^n_K(\mathbb{R}^n;\mathbb{R}^n)$ and $V\in L^{n/2}_K(\mathbb{R}^n;\mathbb{R})$, interpreted by \eqref{eq:q-form}, define $K_q$ on $L^2$ by
\begin{equation}\label{eq:K-form}
(K_qz,w)_{L^2}=\langle q,(Az)\overline{Aw}\rangle.
\end{equation}
Equations~\eqref{eq:form-bound} and \eqref{eq:A-maps} show that this form is bounded.
Riesz representation therefore supplies a unique bounded operator.
Because $q$ is real, $K_q=K_q^*$.
Its composition interpretation is
\[
z\in L^2\ \longmapsto\ Az\in H^1_{\mathrm{loc}}
\ \longmapsto\ qAz\in H^{-1}_K
\ \longmapsto\ A(qAz)\in L^2.
\]
To identify this composition with \eqref{eq:K-form}, begin with vectors whose Fourier transforms are smooth, compactly supported, and disjoint from $\Sigma_{a,\tau}$.
Such vectors form a dense subspace of $L^2$, and their images under $A$ are Schwartz functions.
The Fourier pairing is therefore valid directly on this subspace; the established continuity then extends the equality to all of $L^2$.
The same argument proves the duality identity
\begin{equation}\label{eq:A-duality}
\langle s,\overline{Az}\rangle=(As,z)_{L^2},
\qquad s\in H^{-1}_K, z\in L^2.
\end{equation}
For $z_m\to z$ in $L^2$ and a cutoff $\chi=1$ near $K$, Lemma~\ref{lem:A-maps} gives the local estimate
\[
|\langle s,\overline{A(z_m-z)}\rangle|
\leq\|s\|_{H^{-1}}\|\chi A(z_m-z)\|_{H^1}
\leq C_{K,\chi,\tau}\|s\|_{H^{-1}}\|z_m-z\|_2\longrightarrow0.
\]
The other side converges by $As\in L^2$.
This proves \eqref{eq:A-duality} on its full stated domains even when $Az$ itself is not globally square integrable.
The same local convergence, used twice in \eqref{eq:form-bound}, justifies the extension of \eqref{eq:K-form} from the Fourier test class.

Introduce the auxiliary multiplier $A^\sharp=|d_a(D)|^{1/2}$.
The bound $|d_a(\xi)|^{1/2}\leq|\xi|+\tau$ shows that $A^\sharp:L^2\to H^{-1}$ is continuous for fixed parameters.
The symbol $\sharp$ distinguishes this multiplier from a Hilbert adjoint.
For $z\in L^2(\mathbb R^n)$ and $s\in H^{-1}_K(\mathbb R^n)$,
\begin{equation}\label{eq:A-identities}
d_a(D)Az=A^\sharp J_az,
\qquad A^\sharp As=s.
\end{equation}
The identities \eqref{eq:A-identities} hold as distributions because the corresponding Fourier multiplier expressions agree almost everywhere.
In particular, $G_a=AJ_a^{-1}A:H^{-1}_K\to H^1_{\mathrm{loc}}$ is a right inverse for $d_a(D)$.
This right inverse chooses one solution and does not assert uniqueness among all local $H^1$ solutions.
Indeed, $\exp(\mathrm{i}\tau b\cdot x)$ solves the homogeneous equation whenever $b\in S(a^\perp)$.

Set
\begin{equation}\label{eq:T-definition}
T_q=J_a+K_q.
\end{equation}
If $T_qz=As$ for $s\in H^{-1}_K$, then
\[
J_az=A(s-qAz).
\]
The distribution $s-qAz$ belongs to $H^{-1}_K$.
Applying $A^\sharp$ and using \eqref{eq:A-identities} proves
\begin{equation}\label{eq:exact-from-T}
(d_a(D)+q)Az=s.
\end{equation}

\section{Partial inversion and spherical averaging}\label{sec:partial-averaging}
We establish two ingredients of the averaged trace norm method: a partial inverse with controlled exceptional dimension and spherical average estimates for incident sources and responses.

\subsection{Characteristic estimates and a partial inverse}\label{sec:low-row}
The Fourier kernel of $K_q$ contains two singular half weights and the Fourier transform of the potential at their frequency difference.
Taking its squared modulus converts the operator estimate into a convolution of two inverse symbols.
A geometric bound on translated characteristic tubes controls that convolution, including their tangential intersections.

At unit scale let $\Sigma_a=\{\xi:a\cdot\xi=0,\ |\xi|=1\}$, and let $N_s(\Sigma_a)$ be its open Euclidean $s$-neighborhood.
The two normal directions supply a factor $s^2$ in its volume.
Translation creates a further restriction on one coordinate of the characteristic sphere.

\begin{lemma}\label{lem:tubes}
For $n\geq4$, $0<s\leq t\leq s_0<1/8$, and $v\neq0$,
\begin{equation}\label{eq:tubes}
|N_s(\Sigma_a)\cap(v+N_t(\Sigma_a))|
\leq C_n s^2\min\{1,t/|v|\}.
\end{equation}
The bound is uniform in $a$.
\end{lemma}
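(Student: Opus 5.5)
The bound $C_ns^2$ holds trivially, because the intersection lies inside $N_s(\Sigma_a)$. Its volume is comparable to $s^2$ times the $(n-2)$-dimensional measure of $\Sigma_a$: in the coordinates $\xi=xa+\rho\theta$, the condition $\delta<s$ is a disc of radius $s$ in the normal variables $(x,u)$, and the factor $\rho^{n-2}$ is bounded. So we may assume $|v|\geq t$, and it remains to gain the factor $t/|v|$.

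The plan is to slice $N_s(\Sigma_a)$ over the sphere $\Sigma_a\cong S^{n-2}$ and show that only a set of $\theta\in S(a^\perp)$ of measure $O(t/|v|)$ contributes. Decompose $v=v_aa+v'$ with $v'\in a^\perp$. If $\xi=xa+\rho\theta\in N_s(\Sigma_a)$ and $\xi-v\in N_t(\Sigma_a)$, then
\[
|x|<s,\qquad |\rho-1|<s,\qquad |x-v_a|<t,\qquad \bigl||\rho\theta-v'|-1\bigr|<t.
\]
This produces two cases. If $|v_a|\geq|v|/2$, the first and third conditions force $|v_a|<s+t\leq 2t$. Then $|v|\leq 4t$, so $\min\{1,t/|v|\}\geq1/4$ and the trivial bound suffices. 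Otherwise $|v'|\geq|v|/2$. Expanding $|\rho\theta-v'|^2=\rho^2-2\rho\,\theta\cdot v'+|v'|^2$ and using $|\rho-1|<s$, we get
\[
\bigl|\,|v'|^2-2\rho\,\theta\cdot v'\,\bigr|\leq Ct(1+|v'|).
\]
We may further assume $|v'|\leq 3$, since otherwise the intersection is empty; both tubes lie in balls of radius $1+s_0$. Writing $\theta\cdot v'=|v'|\cos\phi$, the constraint reads
\[
\Bigl|\cos\phi-\frac{|v'|}{2\rho}\Bigr|\leq \frac{Ct}{|v'|}.
\]

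The main step is to bound the $S^{n-2}$-measure of the set of $\theta$ for which $\cos\phi$ lies in a fixed interval of length $\varepsilon=Ct/|v'|$. This is where $n\geq4$ enters. The pushforward of surface measure on $S^{n-2}$ under $\theta\mapsto\cos\phi$ has density $c_n(1-\sigma^2)^{(n-4)/2}$. For $n\geq4$ this density is bounded, so any interval of length $\varepsilon$ carries measure at most $C_n\varepsilon$. For $n=3$ the density would blow up at $\sigma=\pm1$, corresponding to tangential intersections; this is exactly why the lemma excludes $n=3$. The center $|v'|/(2\rho)$ depends on $\rho$, but since $\rho$ varies in an interval of length $2s$, the center moves by at most $O(s|v'|)$. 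Because $s\leq t$, this shift can be absorbed by enlarging $\varepsilon$ to $C(t/|v'|+s|v'|)\leq C't/|v'|$, using $|v'|\leq3$ in the form $s|v'|\leq 9s/|v'|$.

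It then remains to integrate. For each fixed $(x,\rho)$ in the normal disc of area $O(s^2)$, the admissible $\theta$ form a set of measure $\leq C t/|v'|\leq 2Ct/|v|$. Fubini with the bounded weight $\rho^{n-2}$ gives $C_ns^2\,t/|v|$. Combining this with the trivial bound yields \eqref{eq:tubes}. Every constant depends only on $n$ and $s_0$, and the whole argument is rotation invariant, so the bound is uniform in $a$.

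The step I expect to need the most care is the uniform sublevel (slab) estimate on $S^{n-2}$ near the tangential configurations $\cos\phi\approx\pm1$. There one should check the density bound rather than rely on a transversality heuristic. The expected difficulty is in confirming the claimed behaviour near $\cos\phi\approx\pm1$ at the endpoint $n=4$, where the density is constant.
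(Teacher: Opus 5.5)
Your proof is correct and follows essentially the same route as the paper. You use the same split according to whether the $a$-component or the $a^\perp$-component of $v$ dominates, the single-tube bound $Cs^2$ when $|v|\le 4t$, and otherwise a slab condition of width $O(t/|v|)$ on $\theta\cdot v'/|v'|$, controlled by the bounded coordinate density $\kappa_n(1-\sigma^2)^{(n-4)/2}$ for $n\geq4$ and then integrated over the normal disc. The only differences are cosmetic: you read the second tube condition directly off the $a^\perp$-component $\rho\theta-v'$ rather than comparing $\xi\cdot v$ with $\theta\cdot w$, and because you slice at fixed $(x,\rho)$, your discussion of the $\rho$-dependent center is not actually needed.
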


\begin{proof}
Write $v=\lambda a+w$ with $w\perp a$, and set $r=|v|$.
If the two tubes meet, then $r\leq2+s+t$.
Indeed, choose $\xi$ in the intersection and points $\eta,\eta'\in\Sigma_a$ with $|\xi-\eta|<s$ and $|\xi-v-\eta'|<t$.
Then $v=(\eta-\eta')+(\xi-\eta)-(\xi-v-\eta')$, and $|\eta-\eta'|\leq2$.
For a point $\xi=xa+\rho\theta$, the exact distance to the sphere is
\[
\operatorname{dist}(\xi,\Sigma_a)^2=x^2+(\rho-1)^2.
\]
Since $1-s_0\leq\rho\leq1+s_0$ in the tube, integration over the disk in $(x,\rho-1)$ gives
\[
|N_s(\Sigma_a)|
\leq |S^{n-2}|(1+s_0)^{n-2}\int_{x^2+u^2<s^2}\mathrm{d}x\,\mathrm{d}u
\leq C_ns^2.
\]
If $|\lambda|\geq r/2$, their normal coordinates require $|x|<s$ and $|x-\lambda|<t$.
Hence $r\leq4t$, and the single-tube estimate $|N_s(\Sigma_a)|\leq Cs^2$ proves \eqref{eq:tubes}.

In the other case, $|w|\geq\sqrt3r/2$.
Write a point of the first tube as $\xi=xa+\rho\theta$, where $|x|<s$, $|\rho-1|<s$, and $\theta\in S(a^\perp)$.
The two tube conditions imply
\[
\big||\xi|^2-1\big|\leq Cs,
\qquad\big||\xi-v|^2-1\big|\leq Ct.
\]
Subtracting the two squared norms gives the exact identity
\[
|\xi-v|^2-|\xi|^2=r^2-2\xi\cdot v.
\]
Hence $|r^2-2\xi\cdot v|\leq C(s+t)\leq Ct$.
Also $\theta\cdot v=\theta\cdot w$ and
\[
|2\xi\cdot v-2\theta\cdot w|
\leq2|\xi-\theta|r\leq Csr\leq Ct,
\]
where the last inequality uses $s\leq t$ and the already established bound on $r$.
Combining these two estimates gives
\[
|2\theta\cdot w-r^2|\leq Ct.
\]
Thus $\theta\cdot w/|w|$ lies in an interval of length at most $Ct/r$.
For a uniform point on $S^{n-2}$, a fixed coordinate has probability density
\begin{equation}\label{eq:coordinate-density}
\kappa_n(1-u^2)^{(n-4)/2}\mathbf{1}_{(-1,1)}(u),
\qquad
\kappa_n=\frac{\Gamma((n-1)/2)}{\sqrt\pi\,\Gamma((n-2)/2)}.
\end{equation}
To compute this density, choose the polar axis $w/|w|$ and use the area element
\[
\sin^{n-3}\theta_0\,\mathrm{d}\theta_0\,\mathrm{d}S_{n-3}.
\]
The substitution $u=\cos\theta_0$ changes the first factor to $(1-u^2)^{(n-4)/2}\,\mathrm{d}u$.
Division by $|S^{n-2}|$ gives \eqref{eq:coordinate-density}, since $|S^{n-3}|/|S^{n-2}|=\kappa_n$.
For $n\geq4$ it is bounded, including at $u=\pm1$.
The permissible spherical directions consequently have area at most $C\min\{1,t/r\}$.
Integrating the two normal coordinates, whose Jacobian $\rho^{n-2}$ is bounded, supplies $Cs^2$ and proves the result.
The same coordinate density includes tangential intersections, so no transversality assumption has been imposed.

The proof is complete.
\end{proof}

We use \eqref{eq:tubes} first at unit scale and then restore the large parameter.
The following convolution bound gives the weight in $\Gamma_q(\tau)$ from \eqref{eq:gamma-intro}.

\begin{lemma}\label{lem:convolution}
Fix $R>2$ and $n\geq4$.
For $v\neq0$,
\begin{equation}\label{eq:convolution}
I_{a,\tau}(v):=
\int_{|\xi|\leq R\tau}
\frac{\mathrm{d}\xi}{|d_a(\xi)|\,|d_a(\xi-v)|}
\leq\frac{C_{n,R}\tau^{n-2}}{|v|^2+\tau|v|}.
\end{equation}
\end{lemma}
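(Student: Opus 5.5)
First rescale to unit scale. The substitution $\xi=\tau\eta$, $v=\tau w$ and the identity $d_a(\tau\eta)=\tau^2d_{a,1}(\eta)$ give
\[
I_{a,\tau}(v)=\tau^{n-4}\int_{|\eta|\leq R}\frac{\mathrm{d}\eta}{|d_{a,1}(\eta)|\,|d_{a,1}(\eta-w)|}.
\]
The target $\tau^{n-2}/(|v|^2+\tau|v|)$ equals $\tau^{n-4}/(|w|^2+|w|)$. It therefore suffices to show that the unit-scale integral $I_1(w)$ is at most $C/(|w|^2+|w|)$, uniformly in $a$. This amounts to two bounds: $I_1(w)\leq C/|w|$ for $|w|\leq 1$, and $I_1(w)\leq C/|w|^2$ for $|w|\geq1$.

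\textbf{Case $|w|$ large.} Suppose $|w|\geq 4R$. On $|\eta|\leq R$ we have $|\eta-w|\geq|w|/2\geq 2R$, so \eqref{eq:elliptic-symbol} at unit scale gives $|d_{a,1}(\eta-w)|\geq c|w|^2$. The remaining factor $\int_{|\eta|\leq R}|d_{a,1}|^{-1}$ is finite by the integrability near $\Sigma_a$ noted before Lemma~\ref{lem:A-maps}. This yields $C/|w|^2$.

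\textbf{Case $|w|\leq 4R$.} Here the claim reduces to $I_1(w)\leq C/|w|$, since $1/|w|\sim 1/(|w|+|w|^2)$ in this range. Away from the tube $N_{s_0}(\Sigma_a)$ the symbol $|d_{a,1}|$ is bounded below on the compact ball. Likewise, away from $w+N_{s_0}(\Sigma_a)$ the shifted symbol is bounded below on the shifted compact set. Wherever at least one factor is bounded below, the integral is dominated by the single-singularity integral, which is $O(1)\leq C/|w|$.

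The genuine contribution comes from points near both characteristic sets. There \eqref{eq:normal-symbol} lets us replace each symbol by the distance to $\Sigma_a$ or to $w+\Sigma_a$. We then decompose dyadically: $\delta_1=\operatorname{dist}(\eta,\Sigma_a)\sim 2^{-j}$ and $\delta_2=\operatorname{dist}(\eta-w,\Sigma_a)\sim2^{-k}$, with $2^{-j},2^{-k}<s_0$. The shell pair with $2^{-j}=s\leq t=2^{-k}$ has measure at most $Cs^2\min\{1,t/|w|\}$ by Lemma~\ref{lem:tubes}, and the symmetric bound holds when $s>t$, after translating by $-w$. The integrand there is $\sim 1/(st)$. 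Summing over $s\leq t$ gives
\[
\sum_{s\leq t}\frac{s^2}{st}\min\{1,t/|w|\}\lesssim\sum_t\min\{1,t/|w|\}.
\]
Over dyadic $t\leq s_0$ this sum is at most $C(1+\log_+(1/|w|))$. When $|w|\gtrsim 1$ it is also at most $C/|w|$, since only $t\gtrsim$ const contribute and each term is then $\leq C/|w|$.

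The main obstacle is the small-$|w|$ regime. The summation above gives only a logarithm, not the weight $1/|w|$, so I must check that the bookkeeping really yields $\lesssim 1/|w|$ and repair any loss. The honest dyadic sum is
\[
\sum_{s\leq t}\frac{s}{t}\min\{1,t/|w|\}.
\]
For fixed $t$, the inner sum over dyadic $s\leq t$ of $s/t$ is $O(1)$. The total is therefore $\sum_{t\leq s_0}\min\{1,t/|w|\}$. For $t\geq|w|$ each term equals $1$, and there are about $\log(1/|w|)$ such dyadic $t$. For $t<|w|$ the terms form a geometric series bounded by a constant. The total is thus $O(1+\log_+(1/|w|))$, which is $\leq C/|w|$ for $|w|\leq 4R$. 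The logarithm is harmless, and the bound $C/|w|$ holds throughout.

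Finally, I will verify that all constants depend only on $n$ and $R$. They do, because Lemma~\ref{lem:tubes} and \eqref{eq:normal-symbol} are uniform in $a$. Combining the two cases proves \eqref{eq:convolution}.
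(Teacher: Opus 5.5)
Your proposal is correct and follows essentially the paper's route. You rescale to unit scale, bound $|d_a(\xi-v)|\gtrsim|v|^2$ below for large $|v|/\tau$, and for bounded $|w|=|v|/\tau$ apply Lemma~\ref{lem:tubes} to pairs of tube widths to get $C(1+\log^+(1/|w|))$, which you then absorb into $C/(|w|+|w|^2)$. The only difference is cosmetic: you sum over dyadic shells in the two distances, whereas the paper integrates the continuous layer representation $H_a=s_0^{-1}\mathbf{1}_{N_{s_0}(\Sigma_a)}+\int_0^{s_0}\mathbf{1}_{N_s(\Sigma_a)}\,s^{-2}\,\mathrm{d}s$.
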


\begin{proof}
At unit scale, \eqref{eq:normal-symbol} bounds the inverse symbol on bounded sets by a bounded function plus
\[
H_a(\xi)=\delta(\xi)^{-1}\mathbf{1}_{\{\delta(\xi)<s_0\}},
\qquad \delta(\xi)=\operatorname{dist}(\xi,\Sigma_a).
\]
This function has a uniform $L^1$ norm and the exact layer representation
\[
H_a=s_0^{-1}\mathbf{1}_{N_{s_0}(\Sigma_a)}
+\int_0^{s_0}\mathbf{1}_{N_s(\Sigma_a)}\frac{\mathrm{d}s}{s^2}.
\]
Products involving at most one singular factor are uniformly integrable on the relevant bounded sets.
The layer representation is pointwise away from $\Sigma_a$: for $0<\delta<s_0$,
\[
s_0^{-1}+\int_\delta^{s_0}s^{-2}\,\mathrm{d}s
=s_0^{-1}+\delta^{-1}-s_0^{-1}=\delta^{-1}.
\]
The bound $|N_s(\Sigma_a)|\leq C_ns^2$ proved in Lemma~\ref{lem:tubes} gives
\[
\|H_a\|_1
\leq Cs_0+C\int_0^{s_0}s^2\frac{\mathrm{d}s}{s^2}
\leq C s_0.
\]
In multiplying the two layer representations, the term with both widths fixed is bounded by $Cs_0^{-2}|N_{s_0}(\Sigma_a)|$.
Each term with just one fixed width is bounded by $s_0^{-1}\|H_a\|_1$.
Only the term with two variable widths requires the intersection estimate.
For the product of two singular factors, Tonelli and Lemma~\ref{lem:tubes}, with the regions $s\leq t$ and $t\leq s$ treated symmetrically, give
\begin{equation}\label{eq:log-convolution}
\begin{split}
\int H_a(\xi)H_a(\xi-v)\,\mathrm{d}\xi
&\leq C+C\int_0^{s_0}\int_0^t
\frac{s^2\min\{1,t/r\}}{s^2t^2}\,\mathrm{d}s\,\mathrm{d}t\\
&\leq C\big(1+\log^+(1/r)\big),\qquad r=|v|.
\end{split}
\end{equation}
Indeed, integration first in $s$ leaves
\[
\int_0^{s_0}\min\{1,t/r\}\frac{\mathrm{d}t}{t}
=\begin{cases}
1+\log(s_0/r),&0<r<s_0,\\
s_0/r,&r\geq s_0.
\end{cases}
\]
For the first case one splits at $t=r$: the integral over $(0,r)$ is $r^{-1}\int_0^r\mathrm{d}t=1$, and the remaining part is logarithmic.
For the second case the minimum equals $t/r$ throughout the interval.
This proves \eqref{eq:log-convolution} with a constant independent of $v$ and $a$.

Scaling $\xi=\tau\eta$ gives $I_{a,\tau}(v)=\tau^{n-4}I_{a,1}(v/\tau)$.
When $|v|\leq2(R+1)\tau$, both arguments of the unit-scale symbols lie in fixed bounded sets, and \eqref{eq:log-convolution} applies.
The function $(t+t^2)(1+\log^+(1/t))$ is bounded on every bounded interval of positive $t$.
In detail, writing $r=|v|/\tau$ here to avoid confusion with the tube width,
\[
I_{a,\tau}(v)
\leq C\tau^{n-4}(1+\log^+(1/r))
\leq \frac{C\tau^{n-4}}{r^2+r}
=\frac{C\tau^{n-2}}{|v|^2+\tau|v|}.
\]
The constant in the middle inequality depends only on the fixed upper bound $2(R+1)$ for $r$.
If $|v|>2(R+1)\tau$, then $|d_a(\xi-v)|\geq c_R|v|^2$ on $|\xi|\leq R\tau$.
Indeed, on this region,
\[
|\xi-v|\geq|v|-R\tau\geq\frac{R+2}{2(R+1)}|v|,
\qquad \tau<\frac{|v|}{2(R+1)}.
\]
Using the real part of the symbol consequently gives
\[
|d_a(\xi-v)|\geq|\xi-v|^2-\tau^2
\geq\frac{(R+2)^2-1}{4(R+1)^2}|v|^2.
\]
The integral of the remaining inverse symbol is obtained by scaling:
\[
\int_{|\xi|\leq R\tau}|d_a(\xi)|^{-1}\,\mathrm{d}\xi
=\tau^{n-2}\int_{|\eta|\leq R}|d_{a,1}(\eta)|^{-1}\,\mathrm{d}\eta
\leq C_{n,R}\tau^{n-2}.
\]
The unit-scale integral is finite by the estimate \eqref{eq:normal-symbol} in the two normal coordinates, and uniform in $a$ by rotation.
Since $\tau|v|<|v|^2/(2(R+1))$, the denominators $|v|^2$ and $|v|^2+\tau|v|$ are comparable here.
Multiplying the bound $|d_a(\xi-v)|^{-1}\leq C_R|v|^{-2}$ by the integral bound $C_{n,R}\tau^{n-2}$ therefore proves \eqref{eq:convolution} in this case.

The convolution can be infinite at $v=0$, but that single point has no effect on the subsequent frequency integral.

The proof is complete.
\end{proof}

Let $L_\tau=\mathbf{1}_{\{|D|\leq R\tau\}}$ and $H_\tau=I-L_\tau$ be the low and high frequency projections on $L^2(\mathbb{R}^n)$.
Only the output frequency is restricted in the next estimate.
This point is essential because the cross block $L_\tau K_qH_\tau$ contributes to the singular subspaces of the full operator.

\begin{proposition}\label{prop:row}
For a real potential $q=\operatorname{div}F+V$ with $F\in L^n_K$, $V\in L^{n/2}_K$, and $n\geq4$,
\begin{equation}\label{eq:row}
\sup_{a\in S^{n-1}}\|L_\tau K_q\|_{\mathfrak S_2}^2
\leq C_{n,R}\tau^{n-2}\Gamma_q(\tau).
\end{equation}
\end{proposition}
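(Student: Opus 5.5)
The plan is to compute the Hilbert--Schmidt norm of $L_\tau K_q$ through its Fourier kernel. We then reduce the resulting double integral to the convolution bound of Lemma~\ref{lem:convolution}.

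First, work on the dense Fourier test class from Subsection~\ref{sec:operators}, where the pairing \eqref{eq:K-form} is a genuine Fourier integral. Since $q$ is a compactly supported distribution in $H^{-1}$, its Fourier transform is a function. For $z,w$ in that class the form is
\[
(K_qz,w)_{L^2}=(2\pi)^{-n/2}\iint \widehat q(\xi-\eta)\,|d_a(\eta)|^{-1/2}\widehat z(\eta)\,|d_a(\xi)|^{-1/2}\overline{\widehat w(\xi)}\,\mathrm d\eta\,\mathrm d\xi .
\]
The symbol of $A$ is real and even, so no conjugation issues arise. Thus, after conjugation by the unitary Fourier transform, $K_q$ has kernel
\[
k(\xi,\eta)=(2\pi)^{-n/2}|d_a(\xi)|^{-1/2}\,\widehat q(\xi-\eta)\,|d_a(\eta)|^{-1/2},
\]
and $L_\tau K_q$ has kernel $\mathbf 1_{\{|\xi|\le R\tau\}}k(\xi,\eta)$. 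If this truncated kernel lies in $L^2(\mathbb R^{2n})$, it defines a Hilbert--Schmidt operator. That operator agrees with $L_\tau K_q$ on a dense set, and both are bounded, so the Hilbert--Schmidt norm equals the $L^2$ norm of the kernel.

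Second, compute this $L^2$ norm. Substitute $v=\xi-\eta$ and apply Tonelli:
\[
\|L_\tau K_q\|_{\mathfrak S_2}^2=(2\pi)^{-n}\int |\widehat q(v)|^2\int_{|\xi|\le R\tau}\frac{\mathrm d\xi}{|d_a(\xi)|\,|d_a(\xi-v)|}\,\mathrm dv=(2\pi)^{-n}\int|\widehat q(v)|^2 I_{a,\tau}(v)\,\mathrm dv .
\]
Lemma~\ref{lem:convolution} gives $I_{a,\tau}(v)\le C_{n,R}\tau^{n-2}(|v|^2+\tau|v|)^{-1}$ for $v\neq0$. The point $v=0$ is a null set, so it does not affect the integral. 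Inserting this bound yields exactly $C_{n,R}\tau^{n-2}\Gamma_q(\tau)$, with $\Gamma_q(\tau)$ as in \eqref{eq:gamma-intro}. The estimate is finite by the discussion after \eqref{eq:homogeneous-norm}. Since the constant in Lemma~\ref{lem:convolution} is uniform in $a$, taking the supremum over $a\in S^{n-1}$ is immediate.

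The main obstacle is the identification step: showing that the abstract operator $K_q$, defined via \eqref{eq:q-form} and Riesz representation, has the displayed Fourier kernel even though $q$ is only $\operatorname{div}F+V$. I would first handle smooth $z,w$ whose Fourier supports are compact and avoid $\Sigma_{a,\tau}$. For these, $Az$ and $Aw$ are Schwartz, and $\langle q,Az\,\overline{Aw}\rangle$ is the distributional pairing of $q$ with a Schwartz function. By Parseval this becomes the convolution integral above, using $\widehat{\overline{Aw}}(\zeta)=\overline{\widehat{Aw}(-\zeta)}$. The finiteness of $\int|k|^2$ obtained in the second step then shows that the kernel operator is bounded. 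It agrees with $K_q$ on this dense class, and both sides are continuous, so they agree everywhere. Since only the output frequency is cut off, no truncation of $\eta$ is needed, which matches the remark preceding the proposition.
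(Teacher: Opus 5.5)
Your proposal is correct and follows the paper's route. You identify the Fourier kernel of $L_\tau K_q$ on the dense class of vectors with smooth, compactly supported Fourier transforms avoiding $\Sigma_{a,\tau}$, then use Tonelli with $v=\xi-\eta$ to obtain $(2\pi)^{-n}\int|\widehat q(v)|^2I_{a,\tau}(v)\,\mathrm dv$, apply Lemma~\ref{lem:convolution}, and extend by density via the $L^2$ kernel criterion. One slip: in your last paragraph the density argument should compare the truncated kernel with $L_\tau K_q$, as you state correctly earlier, because the untruncated kernel of $K_q$ need not be square integrable.
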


\begin{proof}
Let $z,w$ have Fourier transforms that are smooth, compactly supported, and disjoint from $\Sigma_{a,\tau}$.
Then $Az$ and $Aw$ are Schwartz functions, and the distributional product with $q$ can be evaluated through its Fourier transform.
With the unitary convention, the convolution formula is
\[
\widehat{qAz}(\xi)=(2\pi)^{-n/2}\int
\widehat q(\xi-\eta)|d_a(\eta)|^{-1/2}\widehat z(\eta)\,\mathrm{d}\eta.
\]
It follows first in pairing with $w$ that the frequency kernel of $L_\tau K_q$ is
\[
k(\xi,\eta)=(2\pi)^{-n/2}
\frac{\mathbf{1}_{\{|\xi|\leq R\tau\}}\widehat q(\xi-\eta)}
{|d_a(\xi)|^{1/2}|d_a(\eta)|^{1/2}}.
\]
Tonelli and the change of variables $v=\xi-\eta$ give
\[
\iint|k(\xi,\eta)|^2\,\mathrm{d}\eta\,\mathrm{d}\xi
=(2\pi)^{-n}\int|\widehat q(v)|^2I_{a,\tau}(v)\,\mathrm{d}v
\leq C\tau^{n-2}\Gamma_q(\tau).
\]
By the $L^2$ kernel criterion, $k$ defines a Hilbert--Schmidt operator with squared norm equal to this integral~\cite[Theorem~2.11]{simon2005trace}.
It agrees with $L_\tau K_q$ on the Fourier test class used above.
Since $\Sigma_{a,\tau}$ has measure zero, this class is dense in $L^2$; boundedness extends the equality to all of $L^2$.
The variable $\eta$ was never truncated, which proves the claimed complete row estimate.
\end{proof}

The low-frequency row bound controls the exceptional dimension; the high-frequency block will be small in operator norm.
Together these estimates construct a uniformly bounded inverse between domain and range subspaces of finite codimension determined by the actual conjugated operator.

Equation~\eqref{eq:elliptic-symbol} and the Sobolev inequality imply
\begin{equation}\label{eq:high-maps}
\|AH_\tau\|_{2\to2}\leq C_R\tau^{-1},
\qquad \|\nabla AH_\tau\|_{2\to2}\leq C_R,
\qquad \|AH_\tau\|_{2\to2^*}\leq C_{n,R}.
\end{equation}
These are global estimates because the multiplier is away from its characteristic set.
They involve no differentiation of the sharp frequency cutoff.

For a height $b>0$, set $F_b=F\mathbf{1}_{\{|F|\leq b\}}$ and $V_b=V\mathbf{1}_{\{|V|\leq b\}}$.
Their bounded parts satisfy $\|F_b\|_\infty,\|V_b\|_\infty\leq b$, and the critical norms of the tails tend to zero as $b\to\infty$.
Apply \eqref{eq:q-form} to $u\overline v$, where $u=AH_\tau z$ and $v=AH_\tau w$.
It follows that
\begin{equation}\label{eq:high-small}
\begin{split}
\|H_\tau K_qH_\tau\|
\leq C\big(&\|F-F_b\|_n+\|V-V_b\|_{n/2}\\
&+\tau^{-1}\|F_b\|_\infty+\tau^{-2}\|V_b\|_\infty\big).
\end{split}
\end{equation}
To prove \eqref{eq:high-small}, write $F^{>b}=F-F_b$ and $V^{>b}=V-V_b$ within the estimate, and take $\|z\|_2=\|w\|_2=1$.
With $u=AH_\tau z$ and $v=AH_\tau w$, the first order terms obey
\[
\begin{aligned}
\left|\int F^{>b}\cdot(u\nabla\overline v+\overline v\nabla u)\right|
&\leq\|F^{>b}\|_n
\big(\|u\|_{2^*}\|\nabla v\|_2+\|v\|_{2^*}\|\nabla u\|_2\big)
\leq C\|F^{>b}\|_n,\\
\left|\int F_b\cdot(u\nabla\overline v+\overline v\nabla u)\right|
&\leq\|F_b\|_\infty
\big(\|u\|_2\|\nabla v\|_2+\|v\|_2\|\nabla u\|_2\big)
\leq C\tau^{-1}\|F_b\|_\infty.
\end{aligned}
\]
Similarly, the zeroth order terms satisfy
\[
\left|\int V^{>b}u\overline v\right|
\leq C\|V^{>b}\|_{n/2},
\qquad
\left|\int V_bu\overline v\right|
\leq C\tau^{-2}\|V_b\|_\infty.
\]
Taking the supremum over the unit vectors $z,w$ gives \eqref{eq:high-small}.
The convergence of the tails follows from dominated convergence applied to $|F|^n\mathbf1_{\{|F|>b\}}$ and $|V|^{n/2}\mathbf1_{\{|V|>b\}}$.
In particular, given $\varepsilon>0$, choose $b$ so that the sum of the first two terms on the right of \eqref{eq:high-small}, including its constant, is less than $\varepsilon/2$.
For this fixed $b$, choose $\tau_\varepsilon$ so that the remaining two terms are less than $\varepsilon/2$ for every $\tau\geq\tau_\varepsilon$.
Neither choice depends on $a$.

The bound is uniform in $a$, so
\begin{equation}\label{eq:high-limit}
\sup_a\|H_\tau K_qH_\tau\|\longrightarrow0.
\end{equation}
The nonsmooth amplitude truncations occur only in integrals, and their derivatives are not used.

Classical Fredholm theory and singular-value estimates provide background for finite-dimensional obstructions and spectral counting~\cite{atkinson1951normal,cwikel1977weak}.

For a bounded operator $T$, a small singular value subspace is defined by spectral projection of $T^*T$ or $TT^*$.
The following construction fixes the singular value threshold at $1/4$.

\begin{proposition}\label{prop:partial}
For each fixed real potential in Proposition~\ref{prop:row}, there is a number $\tau_0(q)$ independent of $a$ such that, for $\tau\geq\tau_0(q)$ and $T=T_q$, the projections
\begin{equation}\label{eq:bad-projections}
\Pi_q^R=\mathbf{1}_{[0,1/16)}(T^*T),
\qquad\Pi_q^L=\mathbf{1}_{[0,1/16)}(TT^*)
\end{equation}
have finite rank and satisfy
\begin{equation}\label{eq:rank}
\operatorname{rank}\Pi_q^{L,R}\leq C\tau^{n-2}\Gamma_q(\tau).
\end{equation}
There is a bounded operator $W_q$ with
\begin{equation}\label{eq:partial-identities}
TW_q=I-\Pi_q^L,
\qquad W_qT=I-\Pi_q^R,
\qquad\|W_q\|\leq4.
\end{equation}
\end{proposition}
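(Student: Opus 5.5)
The plan is to split $T=J_a+K_q$ into a part that is small in operator norm and a Hilbert--Schmidt part of controlled rank. Writing $K_q=L_\tau K_q+H_\tau K_q L_\tau+H_\tau K_qH_\tau$, the middle block is also Hilbert--Schmidt. Since $K_q$ is self-adjoint, $H_\tau K_qL_\tau=(L_\tau K_qH_\tau)^*$, and $\|L_\tau K_qH_\tau\|_{\mathfrak S_2}\le\|L_\tau K_q\|_{\mathfrak S_2}$. Set
\[
T=J_a+E+S,\qquad E=H_\tau K_qH_\tau,\qquad S=L_\tau K_q+H_\tau K_qL_\tau .
\]
Then $\|S\|_{\mathfrak S_2}^2\le 4\|L_\tau K_q\|_{\mathfrak S_2}^2\le C\tau^{n-2}\Gamma_q(\tau)$ by Proposition~\ref{prop:row}. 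By \eqref{eq:high-limit}, there is $\tau_0(q)$, independent of $a$, with $\|E\|\le 1/8$ for $\tau\ge\tau_0(q)$. Because $J_a$ is unitary, $U:=J_a+E$ is invertible with $\|U^{-1}\|\le 8/7$, and every singular value of $U$ is at least $7/8$.

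To count small singular values, I would use the min--max principle. On the closed subspace $\mathcal N=\ker S$, whose orthogonal complement $\overline{\operatorname{ran} S^*}$ I need to control, we have $\|Tz\|=\|Uz\|\ge\frac78\|z\|>\frac14\|z\|$. A subspace on which $T^*T<1/16$ meets $\mathcal N$ only in $0$, so its dimension is at most $\operatorname{codim}\mathcal N=\operatorname{rank}S$. However, $S$ need not have finite rank. I would therefore refine the argument with a finite-rank approximation. Let $S_N$ be the truncation of the singular value decomposition of $S$ that keeps only singular values $\ge 1/8$. Then $\|S-S_N\|\le 1/8$, and
\[
\operatorname{rank} S_N\le 64\|S\|_{\mathfrak S_2}^2\le C\tau^{n-2}\Gamma_q(\tau).
\]
On $\ker S_N$ we get $\|Tz\|\ge(1-\frac18-\frac18)\|z\|=\frac34\|z\|$. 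The min--max argument then gives
\[
\operatorname{rank}\mathbf 1_{[0,1/16)}(T^*T)\le\operatorname{rank}S_N .
\]
Applying the same argument to $T^*=J_a^*+E^*+S^*$, which has the same norms, gives the bound \eqref{eq:rank} for $\Pi^L_q$.

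For $W_q$, I would use the polar decomposition $T=V|T|$, with $|T|=(T^*T)^{1/2}$. The projection $\Pi_q^R$ commutes with $|T|$. On $\operatorname{ran}(I-\Pi_q^R)$ we have $|T|\ge 1/4$, so $|T|^{-1}(I-\Pi^R_q)$ is bounded with norm $\le4$. The partial isometry $V$ maps $\operatorname{ran}(I-\Pi^R_q)$ isometrically onto $\operatorname{ran}(I-\Pi^L_q)$, because $TT^*=V(T^*T)V^*$ and the spectral projections intertwine: $V\mathbf 1_{[1/16,\infty)}(T^*T)V^*=\mathbf 1_{[1/16,\infty)}(TT^*)$, using that the nonzero spectral parts correspond. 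Define
\[
W_q=|T|^{-1}(I-\Pi^R_q)V^* .
\]
Then $W_qT=|T|^{-1}(I-\Pi^R_q)V^*V|T|=I-\Pi^R_q$, since $V^*V$ is the projection onto $\overline{\operatorname{ran}|T|}$, which contains $\operatorname{ran}(I-\Pi_q^R)$. Also $TW_q=V(I-\Pi^R_q)V^*=I-\Pi^L_q$, and $\|W_q\|\le4$.

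The main obstacle is this last intertwining step, $V(I-\Pi^R_q)V^*=I-\Pi^L_q$. If $\ker T\ne0$, the naive identity could fail, so the verification must be done carefully. Since the threshold $1/16>0$, the kernel lies inside the range of $\Pi^R_q$, and the identity follows from the functional-calculus relation $Vf(T^*T)V^*=f(TT^*)$ for Borel $f$ with $f(0)=0$. The rank count is routine once $S_N$ is chosen.
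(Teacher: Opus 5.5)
Your proposal is correct and follows essentially the paper's route: the same splitting $T=(J_a+H_\tau K_qH_\tau)+R_q$ with Hilbert--Schmidt remainder $R_q=L_\tau K_q+H_\tau K_qL_\tau$, and the same inverse $W_q=\psi(|T|)V^*$ with $\psi(s)=s^{-1}\mathbf 1_{[1/4,\infty)}(s)$. The only difference is the counting step: you truncate the singular value decomposition of $R_q$ at $1/8$ and use a codimension argument, whereas the paper works directly from $\|R_qz\|\geq\|z\|/4$ on the exceptional space and the inequality $N/16\leq\sum_\ell\|R_qz_\ell\|^2\leq\|R_q\|_{\mathfrak S_2}^2$ for orthonormal families; both give the same rank bound up to constants.
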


\begin{proof}
Choose $\tau_0(q)$ so that \eqref{eq:high-small} is at most $1/2$.
The comparison operator
\[
T_{\mathrm e}=J_a+H_\tau K_qH_\tau
=J_a(I+J_a^{-1}H_\tau K_qH_\tau)
\]
is invertible, since unitarity of $J_a$ gives $\|J_a^{-1}H_\tau K_qH_\tau\|\leq1/2$.
More precisely,
\[
T_{\mathrm e}^{-1}
=\sum_{m=0}^\infty(-J_a^{-1}H_\tau K_qH_\tau)^mJ_a^{-1},
\qquad
\|T_{\mathrm e}^{-1}\|\leq\sum_{m=0}^\infty2^{-m}=2.
\]
In particular $\|T_{\mathrm e}z\|\geq\|z\|/2$, and the same lower bound holds for $T_{\mathrm e}^*$.
Let $R_q=T-T_{\mathrm e}=K_q-H_\tau K_qH_\tau$.
Relative to $L^2=L_\tau L^2\oplus H_\tau L^2$, this remainder contains the low--low block and both cross blocks.
Since $K_q$ is self-adjoint, its two cross blocks are adjoints of one another.
Writing $L=L_\tau$ and $H=H_\tau$ within this calculation, the blocks are
\[
R_q=\begin{pmatrix}LK_qL&LK_qH\\HK_qL&0\end{pmatrix},
\qquad
LK_q=\begin{pmatrix}LK_qL&LK_qH\\0&0\end{pmatrix}.
\]
The first row is Hilbert--Schmidt by Proposition~\ref{prop:row}, and $HK_qL=(LK_qH)^*$ is Hilbert--Schmidt with the same norm as its adjoint.
Choosing orthonormal bases separately in the two summands and summing their squared images establishes orthogonality of the block contributions.
Orthogonality of distinct blocks in the Hilbert--Schmidt inner product gives
\begin{equation}\label{eq:remainder-row}
\|R_q\|_{\mathfrak S_2}^2
=\|L_\tau K_qL_\tau\|_{\mathfrak S_2}^2
+2\|L_\tau K_qH_\tau\|_{\mathfrak S_2}^2
\leq2\|L_\tau K_q\|_{\mathfrak S_2}^2.
\end{equation}

For $z\in\operatorname{ran}\Pi_q^R$, spectral calculus gives $\|Tz\|\leq\|z\|/4$, whereas $\|T_{\mathrm e}z\|\geq\|z\|/2$.
Hence $\|R_qz\|\geq\|z\|/4$.
If $z_1,\ldots,z_N$ is any orthonormal family in this subspace, then
\[
N/16\leq\sum_{\ell=1}^N\|R_qz_\ell\|^2\leq\|R_q\|_{\mathfrak S_2}^2.
\]
No infinite-dimensional subspace can satisfy this bound for every finite orthonormal family.
Thus the projection is finite rank and the bound follows from \eqref{eq:row} and \eqref{eq:remainder-row}.
For $z\in\operatorname{ran}\Pi_q^L$, the spectral theorem for $TT^*$ yields
\[
\|T^*z\|^2=(TT^*z,z)\leq\tfrac1{16}\|z\|^2.
\]
The inverse bound for $T_{\mathrm e}^*$ gives $\|T_{\mathrm e}^*z\|\geq\|z\|/2$, hence $\|R_q^*z\|\geq\|z\|/4$.
For any orthonormal family $z_1,\ldots,z_N$ in the left exceptional space,
\[
\frac{N}{16}\leq\sum_{\ell=1}^N\|R_q^*z_\ell\|^2
\leq\|R_q^*\|_{\mathfrak S_2}^2=\|R_q\|_{\mathfrak S_2}^2.
\]
This gives the same dimension bound without identifying the two exceptional subspaces or assuming that $T$ is normal.
Both the right and left estimates use the complete remainder in \eqref{eq:remainder-row}, so both cross blocks have been included.

We make explicit the closed range fact used in the polar decomposition.
On the complement of the right exceptional space, $|T|=(T^*T)^{1/2}$ is bounded below by $1/4$.
On the orthogonal complement of its kernel within the finite-dimensional exceptional space, its positive eigenvalues have a positive minimum.
Consequently $T$ is bounded below on $(\ker T)^\perp$ and has closed range.
More precisely, let $c_T>0$ be the smaller of $1/4$ and the finitely many positive singular values in the exceptional space, omitting the latter if there are none.
The spectral subspaces are orthogonal and reduce $|T|$, so
\[
\|Tz\|=\||T|z\|\geq c_T\|z\|,
\qquad z\in(\ker T)^\perp.
\]
The number $c_T$ is used only for this qualitative fact; the bound for $W_q$ below depends on the fixed threshold $1/4$.

Write $T=U|T|$ for its polar decomposition.
The partial isometry $U$ maps $(\ker T)^\perp$ unitarily onto $\operatorname{ran}T=(\ker T^*)^\perp$.
Define
\[
\psi(s)=s^{-1}\mathbf{1}_{[1/4,\infty)}(s),\qquad\psi(0)=0,
\qquad W_q=\psi(|T|)U^*.
\]
The projection $U^*U$ onto $(\ker T)^\perp$ acts as the identity on the range of $|T|$.
Therefore
\[
W_qT=\psi(|T|)U^*U|T|
=\psi(|T|)|T|
=\mathbf1_{[1/4,\infty)}(|T|)=I-\Pi_q^R.
\]
The threshold agrees with \eqref{eq:bad-projections}, because squaring maps $[0,1/4)$ onto $[0,1/16)$.
In particular, a singular value exactly equal to $1/4$ belongs to the retained subspace and is inverted by $W_q$.
On the nonzero spectral spaces, $|T^*|=U|T|U^*$, so
\[
TW_q=U\mathbf{1}_{[1/4,\infty)}(|T|)U^*
=\mathbf{1}_{[1/4,\infty)}(|T^*|)=I-\Pi_q^L.
\]
Both sides vanish on $\ker T^*$, and the identity is therefore valid on all of $L^2$.
Finally, $\|\psi\|_\infty=4$, which proves the inverse bound.
\end{proof}

The construction also gives $W_q\Pi_q^L=0$ and $\Pi_q^RW_q=0$.
Thus, if $s\in L^2(\mathbb{R}^n)$ and $\Pi_q^Ls=0$, the vector $z=W_qs$ is the unique solution of $Tz=s$ in $\ker\Pi_q^R$.

By \eqref{eq:rank}, the exceptional rank divided by $\tau^{n-2}$ tends to zero.
Subsection~\ref{sec:averages} computes a matching inverse power of $\tau$ for the average squared response of the incident sphere.

\subsection{Spherical averages of source and response operators}\label{sec:averages}
The partial inverse acts on the error generated by a free incident wave.
We now estimate the entire family of such errors and, separately, the free response between two incident waves.

Estimates for orthonormal systems in restriction theory, Strichartz inequalities, and spectral cluster bounds provide related uses of Schatten norms~\cite{frank2014strichartz,frank2017restriction,frank2017spectral}.

For $a\in S^{n-1}$ set $H_a=L^2(S(a^\perp),\mathrm{d}\sigma_a)$, where $\sigma_a$ is probability measure, and define
\begin{equation}\label{eq:incident}
(E_ag)(x)=\int_{S(a^\perp)}\exp(\mathrm{i}\tau b\cdot x)g(b)\,\mathrm{d}\sigma_a(b).
\end{equation}
For every multi-index $\beta$, differentiation under the integral and Cauchy--Schwarz give
\begin{equation}\label{eq:incident-smooth}
\sup_x|\partial^\beta E_ag(x)|\leq\tau^{|\beta|}\|g\|_{H_a}.
\end{equation}
Thus $E_a$ maps $H_a$ into every integer Sobolev space on every fixed bounded set.
Both $d_a(D)E_a$ and $d_{-a}(D)E_a$ vanish.

The source operator is
\begin{equation}\label{eq:source}
C_q=AqE_a:H_a\longrightarrow L^2(\mathbb{R}^n).
\end{equation}
For fixed $\tau$, the function $b\mapsto q\exp(\mathrm{i}\tau b\cdot x)$ is continuous into $H^{-1}_K$.
For a fixed $\chi\in C_c^\infty$ equal to one near $K$, the localized difference $\chi(x)(\exp(\mathrm{i}\tau b\cdot x)-\exp(\mathrm{i}\tau b_0\cdot x))$ tends to zero in $W^{1,\infty}$ as $b\to b_0$, which gives the asserted convergence in $H^{-1}_K$.
Lemma~\ref{lem:A-maps} makes $b\mapsto A(q\exp(\mathrm{i}\tau b\cdot x))$ continuous into $L^2$.
It is therefore a square-integrable Hilbert-space-valued kernel on the compact incident sphere.
Parseval gives
\begin{equation}\label{eq:source-kernel}
\|C_q\|_{\mathfrak S_2}^2
=\int_{S(a^\perp)}\|A(q\exp(\mathrm{i}\tau b\cdot x))\|_2^2\,\mathrm{d}\sigma_a(b).
\end{equation}
The Bochner integral associated with this kernel equals $AqE_ag$, since continuous linear maps commute with the integral and the identity can first be tested against smooth functions.

The Fourier shift induced by the plane wave yields the more useful expression
\begin{equation}\label{eq:source-integral}
\|C_q\|_{\mathfrak S_2}^2
=\int|\widehat q(v)|^2
\int_{S(a^\perp)}
\frac{\mathrm{d}\sigma_a(b)}{\big||v|^2+2\tau b\cdot v-2\mathrm{i}\tau a\cdot v\big|}
\,\mathrm{d}v.
\end{equation}
In detail, multiplication of a distribution by a plane wave gives
\[
\widehat{q\exp(\mathrm{i}\tau b\cdot x)}(\xi)=\widehat q(\xi-\tau b).
\]
The squared $L^2$ norm in \eqref{eq:source-kernel} is therefore the integral of $|d_a(\xi)|^{-1}|\widehat q(\xi-\tau b)|^2$.
After setting $v=\xi-\tau b$, the denominator becomes
\[
\begin{aligned}
d_a(v+\tau b)
&=|v+\tau b|^2-\tau^2-2\mathrm{i}\tau a\cdot(v+\tau b)\\
&=|v|^2+2\tau b\cdot v-2\mathrm{i}\tau a\cdot v,
\end{aligned}
\]
where $|b|=1$ and $a\cdot b=0$ cancel the constant and imaginary cross terms.
Tonelli then exchanges integration in $v$ and $b$ without a preliminary integrable majorant, since the integrand is nonnegative.
The resulting estimate in Lemma~\ref{lem:source-average} will provide the finite bound after averaging in $a$.

Let $\mathrm{d}\mu(a)$ be probability measure on $S^{n-1}$ and write $\mathbb E_a$ for integration against this measure.
The source estimate can now be reduced to an elementary two-dimensional integral.

\begin{lemma}\label{lem:source-average}
For $n\geq4$,
\begin{equation}\label{eq:source-average}
\tfrac12\Gamma_q(\tau)
\leq\mathbb E_a\|C_q(a,\tau)\|_{\mathfrak S_2}^2
\leq C_n\Gamma_q(\tau).
\end{equation}
\end{lemma}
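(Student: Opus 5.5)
Starting from \eqref{eq:source-integral}, I will apply Tonelli once more and exchange the $a$-average with the $v$-integral. This gives
\[
\mathbb E_a\|C_q(a,\tau)\|_{\mathfrak S_2}^2=\int|\widehat q(v)|^2\,M_\tau(v)\,\mathrm{d}v,
\qquad
M_\tau(v)=\mathbb E_a\int_{S(a^\perp)}\frac{\mathrm{d}\sigma_a(b)}{\big||v|^2+2\tau b\cdot v-2\mathrm{i}\tau a\cdot v\big|}.
\]
It then suffices to prove the pointwise two-sided bound
\[
\tfrac12(|v|^2+\tau|v|)^{-1}\leq M_\tau(v)\leq C_n(|v|^2+\tau|v|)^{-1}
\]
for $v\neq0$.

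To compute $M_\tau$ I will use rotation invariance. The pair $(a,b)$, with $a$ uniform on $S^{n-1}$ and $b$ uniform on $S(a^\perp)$, is a uniformly distributed orthonormal pair. Hence the vector $(x,y)=(a\cdot\hat v,\,b\cdot\hat v)$, with $\hat v=v/|v|$, has the law of the first two coordinates of a uniform point on $S^{n-1}$. Its density on the unit disk is $c_n(1-x^2-y^2)^{(n-4)/2}$. This density is bounded above for $n\geq4$, and it is bounded below on the disk of radius $1/2$. Writing $r=|v|$ and $\lambda=r/(2\tau)$, the denominator equals $2\tau r\,|\lambda+y-\mathrm{i}x|$, so
\[
M_\tau(v)=\frac{1}{2\tau r}\,\mathbb E\,\frac{1}{|\lambda+y-\mathrm{i}x|}.
\]
This reduces everything to an elementary two-dimensional integral, as the text before the lemma anticipates.

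For the upper bound, I will use that the density is bounded and that $(x,y)\mapsto((x^2+(y+\lambda)^2)^{1/2})^{-1}$ is locally integrable in the plane. Together these give $\mathbb E|\lambda+y-\mathrm{i}x|^{-1}\leq C\min\{1,1/\lambda\}$. The bound $C$ holds uniformly in $\lambda$. The bound $C/\lambda$ for $\lambda\geq2$ follows because then $|\lambda+y|\geq\lambda/2$. Since $(\tau r)^{-1}\min\{1,2\tau/r\}\sim(r^2+\tau r)^{-1}$, this gives $M_\tau(v)\leq C_n(|v|^2+\tau|v|)^{-1}$.

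For the lower bound, the triangle inequality gives $|\lambda+y-\mathrm{i}x|\leq\lambda+(x^2+y^2)^{1/2}\leq\lambda+1$. Hence $M_\tau\geq(2\tau r(\lambda+1))^{-1}=(r^2+2\tau r)^{-1}\geq\tfrac12(r^2+\tau r)^{-1}$. This inequality is sharp enough to produce exactly the constant $\tfrac12$ in \eqref{eq:source-average}, so no lower bound on the density is needed after all. Integrating both pointwise bounds against $|\widehat q(v)|^2$ and recalling \eqref{eq:gamma-intro} proves the lemma.

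The main obstacle is the identification of the joint law of $(a\cdot\hat v,b\cdot\hat v)$ and its density. This is where the restriction $n\geq4$ enters, since the exponent $(n-4)/2$ must be nonnegative for the density to be bounded near the unit circle. I would derive it by iterating the coordinate density \eqref{eq:coordinate-density}: first take the law of $a\cdot\hat v$ on $S^{n-1}$, then, conditionally on $a$, the law of $b\cdot\hat v=b\cdot P_{a^\perp}\hat v$ on $S(a^\perp)$. This conditional law is the coordinate density on $S^{n-2}$ scaled by $|P_{a^\perp}\hat v|=(1-x^2)^{1/2}$. Multiplying the two densities gives $(1-x^2)^{(n-3)/2}\cdot(1-x^2)^{-1/2}(1-y^2/(1-x^2))^{(n-4)/2}$, which simplifies to the planar density stated above. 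The remaining steps are routine.
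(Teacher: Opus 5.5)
Your proposal is correct and follows essentially the same route as the paper: you use the uniform orthonormal frame to reduce the $a$-average to the bounded disk density $(1-u^2-w^2)^{(n-4)/2}$, you bound the upper side with the planar kernel $|y-z|^{-1}$ for $r\lesssim\tau$ and with $|\lambda+y|\ge\lambda/2$ otherwise (the paper's $r>4\tau$ case), and you get the lower bound $\tfrac12$ from the same triangle inequality showing the denominator has modulus at most $r^2+2\tau r$. The one step deserving an explicit line is the uniformity in $\lambda$ of your local-integrability bound, which the paper obtains from $|B_1(0)\cap B_{1/t}(z)|\le\min\{\pi,\pi t^{-2}\}$; this gives $\int_{|y|<1}|y-z|^{-1}\,\mathrm{d}y\le 2\pi$ for every $z\in\mathbb{R}^2$.
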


\begin{proof}
First choose $a$ uniformly on $S^{n-1}$ and then $b$ uniformly on $S(a^\perp)$.
This gives a uniform orthogonal two-frame.
The joint law is unchanged when the same orthogonal transformation is applied to both vectors.
Complete the frame to an orthogonal matrix distributed according to invariant probability measure on $O(n)$.
For a fixed unit vector $e$, its coordinates in this random frame are uniform on $S^{n-1}$; hence $(b\cdot e,a\cdot e)$ has the law of the first two coordinates of a uniform point on that sphere.
This reduces the calculation to a projection of surface measure, rather than to two independent one-dimensional marginals.
For a fixed unit vector $e$, the coordinates $(u,w)=(b\cdot e,a\cdot e)$ have disk density
\begin{equation}\label{eq:disk-density}
\frac{n-2}{2\pi}(1-u^2-w^2)^{(n-4)/2}
\mathbf{1}_{\{u^2+w^2<1\}}.
\end{equation}
One way to verify the exponent is to parameterize $S^{n-1}$ by $(u,w,\rho\omega)$, where $\rho=(1-u^2-w^2)^{1/2}$ and $\omega\in S^{n-3}$.
The spherical factor gives $\rho^{n-3}$ and the area factor in the two disk coordinates is $\rho^{-1}$.
For $n\geq4$ this density is bounded.
Its normalization can be checked in polar coordinates on the disk:
\[
\begin{aligned}
\int_{u^2+w^2<1}(1-u^2-w^2)^{(n-4)/2}\,\mathrm{d}u\,\mathrm{d}w
&=2\pi\int_0^1\rho(1-\rho^2)^{(n-4)/2}\,\mathrm{d}\rho\\
&=\pi\int_0^1s^{(n-4)/2}\,\mathrm{d}s
=\frac{2\pi}{n-2}.
\end{aligned}
\]
The bound is uniform in the direction $e$ because the underlying frame law is rotation invariant.

Fix $v\neq0$, put $r=|v|$ and $e=v/r$.
When $r\leq4\tau$, the denominator in \eqref{eq:source-integral} equals
\[
2\tau r\sqrt{(u+r/(2\tau))^2+w^2}.
\]
For every $z\in\mathbb{R}^2$, the layer-cake formula gives
\[
\int_{|y|<1}|y-z|^{-1}\,\mathrm{d}y
\leq\int_0^\infty\min\{\pi,\pi t^{-2}\}\,\mathrm{d}t=2\pi.
\]
Thus the averaged reciprocal denominator is at most $C_n/(\tau r)$.
The singular point in the disk integral is $z=(-r/(2\tau),0)$, which may lie inside, on, or outside the unit disk.
The layer-cake bound is uniform in all three cases, since
\[
|\{y:|y|<1,\ |y-z|^{-1}>t\}|
=|B_1(0)\cap B_{1/t}(z)|\leq\min\{\pi,\pi t^{-2}\}.
\]
Thus no separation of this point from the disk boundary is required when $r$ approaches $2\tau$.
Since $r^2+\tau r\leq5\tau r$ in this region, this is at most $C_n/(r^2+\tau r)$.
If $r>4\tau$, orthogonality gives $(a\cdot v)^2+(b\cdot v)^2\leq r^2$, so the denominator is at least $r^2-2\tau r\geq r^2/2$.
This gives the same upper bound.

For the reverse estimate, the denominator is at most $r^2+2\tau r\leq2(r^2+\tau r)$ for every frame.
Average its reciprocal, multiply by $|\widehat q(v)|^2$, and integrate.
Tonelli applies to these nonnegative integrands and proves both inequalities.

The proof is complete.
\end{proof}

The source bound \eqref{eq:source-average} will be combined with an averaged estimate for the free response.

For a compactly supported distribution $f$, define the free response $Q_f$ on $H_a$ by
\begin{equation}\label{eq:Q-kernel}
K_{f,a,\tau}(b,c)
=\langle f,\exp(\mathrm{i}\tau(c-b)\cdot x)\rangle
=(2\pi)^{n/2}\widehat f(\tau(b-c)).
\end{equation}
Here $c$ is the input variable and $b$ is the output variable.
For the potential difference in the present problem,
\begin{equation}\label{eq:Q-form}
(Q_fg,h)_{H_a}=\langle f,(E_ag)\overline{E_ah}\rangle.
\end{equation}
The support cutoff implicit in distributional pairing makes the Fourier transform of $f$ smooth at every real frequency.
Hence \eqref{eq:Q-kernel} is smooth on the product of the compact incident spheres.

Identify the incident sphere with $S^m$, $m=n-2$, and let $L=1-\Delta_{S^m}$.
The spherical harmonics of degree $\ell$ have $L$-eigenvalue $\lambda_\ell=1+\ell(\ell+m-1)$ and multiplicity $d_\ell=O((1+\ell)^{m-1})$.
For an integer $N>m/4$, the sum $\sum_\ell d_\ell\lambda_\ell^{-2N}$ is finite, so $L^{-N}\in\mathfrak S_2$.
The smooth kernel also gives $L^NQ_f\in\mathfrak S_2$ by \cite[Theorem~2.11]{simon2005trace}.
Thus the factorization
\begin{equation}\label{eq:trace-factorization}
Q_f=L^{-N}(L^NQ_f)
\end{equation}
shows that $Q_f\in\mathfrak S_1$, with trace norm bounded by the product of these Hilbert--Schmidt norms.
The equality holds first on smooth functions and then on $L^2$ by boundedness.

For the diagonal formula, expand a smooth kernel $K$ in an orthonormal spherical harmonic basis:
\[
K(b,c)=\sum_{\ell,k}\sum_{j,h}c_{\ell,k;j,h}
Y_{\ell,k}(b)\overline{Y_{j,h}(c)}.
\]
Integration by parts gives $|c_{\ell,k;j,h}|\leq\|L_b^NL_c^NK\|_2\lambda_\ell^{-N}\lambda_j^{-N}$.
The addition identity $\sum_k|Y_{\ell,k}(b)|^2=d_\ell$ and Cauchy--Schwarz bound the absolute kernel series, uniformly in $b,c$, by
\[
\|L_b^NL_c^NK\|_2\left(\sum_{\ell=0}^\infty d_\ell\lambda_\ell^{-N}\right)^2<\infty
\qquad(N>m/2).
\]
Termwise integration on the diagonal and orthogonality therefore identify the diagonal integral with the operator trace.
Thus, for any integral operator $Q$ with smooth kernel $K$ on $S(a^\perp)$, the same factorization and expansion give
\begin{equation}\label{eq:smooth-diagonal}
\operatorname{Tr}Q=\int_{S(a^\perp)}K(b,b)\,\mathrm{d}\sigma_a(b).
\end{equation}
In the present case the diagonal is constant, so
\begin{equation}\label{eq:ordinary-trace}
\operatorname{Tr}Q_f=\int_{S(a^\perp)}K_{f,a,\tau}(b,b)\,\mathrm{d}\sigma_a(b)
=\langle f,1\rangle.
\end{equation}

Unlike the source calculation, the response calculation uses two independent directions within one incident hyperplane.
The averaging over that hyperplane's normal converts their difference into a radial distribution in the ambient space.

\begin{lemma}\label{lem:response-average}
Let $n\geq4$ and $f\in H^{-1}_K(\mathbb{R}^n)$.
Then
\begin{equation}\label{eq:response-average}
\begin{split}
\mathbb E_a\|Q_f(a,\tau)\|_{\mathfrak S_2}^2
&=c_n\tau^{-(n-2)}
\int_{|\xi|<2\tau}\frac{|\widehat f(\xi)|^2}{|\xi|^2}
\left(1-\frac{|\xi|^2}{4\tau^2}\right)^{(n-4)/2}\,\mathrm{d}\xi\\
&\leq c_n\tau^{-(n-2)}\|f\|_{\dot H^{-1}}^2,
\qquad c_n=\frac{(2\pi)^n\kappa_n}{|S^{n-1}|}.
\end{split}
\end{equation}
\end{lemma}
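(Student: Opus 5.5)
Since the kernel \eqref{eq:Q-kernel} is smooth on $S(a^\perp)\times S(a^\perp)$, the Hilbert--Schmidt norm is the $L^2$ norm of the kernel with respect to $\sigma_a\otimes\sigma_a$. This gives
\[
\|Q_f(a,\tau)\|_{\mathfrak S_2}^2=(2\pi)^n\iint_{S(a^\perp)^2}|\widehat f(\tau(b-c))|^2\,\mathrm{d}\sigma_a(b)\,\mathrm{d}\sigma_a(c).
\]
Averaging in $a$ and using Tonelli, we are left with the expectation of $|\widehat f(\tau(b-c))|^2$. Here $a$ is uniform on $S^{n-1}$, and $b,c$ are independent and uniform on $S(a^\perp)$. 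The plan is to identify the law of the random vector $\xi=\tau(b-c)\in\mathbb{R}^n$. Then $\mathbb E_a\|Q_f\|_{\mathfrak S_2}^2=(2\pi)^n\int|\widehat f(\xi)|^2\,p_\tau(\xi)\,\mathrm{d}\xi$, and it remains to compute the density $p_\tau$ explicitly.

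\emph{Step 1: the law of $b-c$.} The joint law of $(a,b,c)$ is invariant under $O(n)$, so the law of $b-c$ is radial. It is therefore determined by the law of $|b-c|$. Conditionally on $a$ and $b$, the variable $c$ is uniform on the $(n-2)$-sphere $S(a^\perp)$, and $|b-c|^2=2-2u$ with $u=b\cdot c$. By \eqref{eq:coordinate-density}, $u$ has density $\kappa_n(1-u^2)^{(n-4)/2}$ on $(-1,1)$, independently of $a,b$.

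\emph{Step 2: convert to a radial density.} Put $r=|\xi|=\tau\sqrt{2-2u}$, so $u=1-r^2/(2\tau^2)$ and $\mathrm{d}u=-r\,\mathrm{d}r/\tau^2$. Then
\[
1-u^2=(1-u)(1+u)=\frac{r^2}{2\tau^2}\Bigl(2-\frac{r^2}{2\tau^2}\Bigr)=\frac{r^2}{\tau^2}\Bigl(1-\frac{r^2}{4\tau^2}\Bigr).
\]
Hence $r$ has density
\[
\kappa_n\,\frac{r}{\tau^2}\,\frac{r^{n-4}}{\tau^{n-4}}\Bigl(1-\frac{r^2}{4\tau^2}\Bigr)^{(n-4)/2}\quad\text{on }(0,2\tau).
\]
Spreading this uniformly over spheres of radius $r$ divides by $|S^{n-1}|r^{n-1}$. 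This gives
\[
p_\tau(\xi)=\frac{\kappa_n}{|S^{n-1}|}\,\tau^{-(n-2)}|\xi|^{-2}\Bigl(1-\frac{|\xi|^2}{4\tau^2}\Bigr)^{(n-4)/2}\mathbf 1_{\{|\xi|<2\tau\}}.
\]
Multiplying by $(2\pi)^n$ yields the equality in \eqref{eq:response-average} with the stated $c_n$.

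\emph{Step 3: the inequality.} For $n\geq4$ the factor $(1-|\xi|^2/4\tau^2)^{(n-4)/2}$ is at most $1$. Dropping it and extending the integral to all of $\mathbb{R}^n$ gives $c_n\tau^{-(n-2)}\|f\|_{\dot H^{-1}}^2$ via \eqref{eq:homogeneous-norm}. That norm is finite for compactly supported $H^{-1}$ distributions, as shown in Subsection~\ref{sec:boundary}.

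The main obstacle is Step 2, together with the justification that the law of $\tau(b-c)$ really has this density. This needs care at the two endpoints. At $\xi=0$, where $u=1$, the $|\xi|^{-2}$ singularity is integrable because $n\geq3$. At $|\xi|=2\tau$, where $u=-1$, the density stays bounded because $n\geq4$. All steps exchange integrals of nonnegative functions, so Tonelli suffices throughout.
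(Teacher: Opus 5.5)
Your proposal is correct and follows the paper's own route. Both arguments write the Hilbert--Schmidt norm as the $L^2$ norm of the kernel, use the coordinate density $\kappa_n(1-u^2)^{(n-4)/2}$ for $u=b\cdot c$, change variables to the radius, and use rotational invariance to spread the radial law uniformly over spheres; the only difference is that you build the scaling by $\tau$ into the change of variables, whereas the paper first finds the density of $b-c$ and then substitutes $\xi=\tau z$.
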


\begin{proof}
For fixed $a$, choose $b,c$ independently from $S(a^\perp)$.
The coordinate $u=b\cdot c$ has density \eqref{eq:coordinate-density}.
For each fixed $b$, rotate the hyperplane $a^\perp$ so that $b$ is its first coordinate vector.
The measure of $c$ is unchanged, and $b\cdot c$ becomes its first coordinate.
The conditional law of this scalar does not depend on $b$, so averaging in $b$ preserves the same density.
In particular, for every nonnegative measurable function $\varphi$,
\[
\mathbb E_{b,c}\varphi(|b-c|)
=\kappa_n\int_{-1}^1\varphi(\sqrt{2-2u})(1-u^2)^{(n-4)/2}\,\mathrm{d}u.
\]
Putting $r=|b-c|=(2-2u)^{1/2}$ gives $u=1-r^2/2$, $|\mathrm{d}u|=r\,\mathrm{d}r$, and
\[
1-u^2=r^2(1-r^2/4).
\]
Thus the length $r$ has density
\[
\kappa_n r^{n-3}(1-r^2/4)^{(n-4)/2}\mathbf{1}_{(0,2)}(r).
\]
After averaging $a$, the vector $z=b-c$ has a rotationally invariant law in $\mathbb{R}^n$.
Consequently its integral against a nonnegative measurable function $\Phi$ is
\[
\mathbb E_{a,b,c}\Phi(b-c)
=\kappa_n\int_0^2 r^{n-3}(1-r^2/4)^{(n-4)/2}
\int_{S^{n-1}}\Phi(r\theta)\,\mathrm{d}\mu(\theta)\,\mathrm{d}r.
\]
Indeed, invariance under all ambient rotations forces the conditional angular measure at each positive radius to be normalized surface measure.
The law of the radius has already been computed and has no atom at zero.
Writing $\mathrm{d}\mu=|S^{n-1}|^{-1}\mathrm{d}S$ and using polar coordinates in Lebesgue measure identifies the full density.
Dividing its radial density by the shell factor $|S^{n-1}|r^{n-1}$ gives its Lebesgue density
\begin{equation}\label{eq:difference-density}
\frac{\kappa_n}{|S^{n-1}|}|z|^{-2}
(1-|z|^2/4)^{(n-4)/2}\mathbf{1}_{\{0<|z|<2\}}.
\end{equation}
This calculation does not assert that $b$ and $c$ become independent directions on the ambient sphere.
Their shared hyperplane has already been accounted for in the length distribution.

The squared kernel formula for the Hilbert--Schmidt norm shows that the left side of \eqref{eq:response-average} is $(2\pi)^n\mathbb E_{a,b,c}|\widehat f(\tau(b-c))|^2$.
Use \eqref{eq:difference-density} and set $\xi=\tau z$.
The volume change contributes $\tau^{-n}$, while $|z|^{-2}$ contributes $\tau^2$.
This proves the equality with precisely the factor $\tau^{-(n-2)}$.
For $n\geq4$, the remaining parenthesis lies between zero and one.
The argument for compactly supported distributions following \eqref{eq:homogeneous-norm} gives $\|f\|_{\dot H^{-1}}<\infty$, so the inequality follows.

The proof is complete.
\end{proof}

The source average in \eqref{eq:source-average} supplies a qualitative small quantity, whereas \eqref{eq:response-average} supplies a geometric scale.
These estimates will be combined only after imposing the exact compatibility conditions from the partial inverses.

Proposition~\ref{prop:row} and Lemmas~\ref{lem:source-average} and \ref{lem:response-average} use bounded densities for $n\geq4$, while the Fourier recovery argument in Lemma~\ref{lem:frequency} will only require $n\geq3$.

For $n=4$, the coordinate density \eqref{eq:coordinate-density} is the constant $1/2$ on $(-1,1)$ and remains bounded up to the endpoints.
The band estimate in the proof of Lemma~\ref{lem:tubes} consequently remains valid when the two spheres become tangent.
The disk density \eqref{eq:disk-density} also becomes constant, equal to $1/\pi$.
Finally, the edge factor in \eqref{eq:response-average} becomes one, so with the unitary Fourier normalization that identity reads
\[
\mathbb E_a\|Q_f(a,\tau)\|_{\mathfrak S_2}^2
=4\pi^2\tau^{-2}\int_{|\xi|<2\tau}
       \frac{|\widehat f(\xi)|^2}{|\xi|^2}\,\mathrm{d}\xi.
\]
These computations explain the applicability of the analytic estimates in dimension four without an endpoint loss in those estimates.

The estimate \eqref{eq:tubes} cannot be extended unchanged to $n=3$.
Let
\[
\Sigma=\{(x,y,z):x^2+y^2=1,\ z=0\},\qquad v=(2,0,0).
\]
The circles $\Sigma$ and $v+\Sigma$ are tangent at $(1,0,0)$.
For sufficiently small $\varepsilon>0$, consider
\[
B_\varepsilon=
\{(1+x,y,z):|x|<\varepsilon/10,
                 |y|<\sqrt\varepsilon/10,
                 |z|<\varepsilon/10\}.
\]
For either sign,
\[
\left|\sqrt{(\pm1+x)^2+y^2}-1\right|
=\frac{|\pm2x+x^2+y^2|}{\sqrt{(\pm1+x)^2+y^2}+1}
<3\varepsilon/10.
\]
Combining this radial distance with the bound on $z$ shows that $B_\varepsilon$ lies in the intersection of the two $\varepsilon$-tubes.
Its volume is a positive constant times $\varepsilon^{5/2}$.
In contrast, \eqref{eq:tubes} with $s=t=\varepsilon$ and $|v|=2$ would give $O(\varepsilon^3)$, which is smaller by an unbounded factor as $\varepsilon\downarrow0$.

For $n=3$, the disk density in \eqref{eq:disk-density} and the difference density in \eqref{eq:difference-density} contain the exponent $-1/2$ and become unbounded at their respective edges.
The arguments using bounded densities for \eqref{eq:source-average} and the inequality in \eqref{eq:response-average} therefore no longer apply, and the edge factor in \eqref{eq:response-average} cannot simply be discarded.
The example of tangent circles shows that this change is accompanied by a geometric failure of \eqref{eq:tubes}.

\section{The averaged trace norm method and uniqueness}\label{sec:uniqueness}
We combine the partial inverse and the average estimates to control the free response associated with the boundary identity in trace norm.
Bounded rotation tests then recover every Fourier frequency.

\subsection{Compatible exact solutions and the response identity}\label{sec:response}
For $n\geq4$, the rank bound \eqref{eq:rank} and the averaged bounds \eqref{eq:source-average}--\eqref{eq:response-average} are now available; we next impose exact source compatibility for the two potentials associated with the logarithms in \eqref{eq:compact-model}, at a common direction $a$ and sufficiently large $\tau$.
Write $K_j=Aq_jA$, $T_j=J_a+K_j$, $C_j=Aq_jE_a$, and take $W_j,\Pi_j^{L,R}$ from Proposition~\ref{prop:partial}.
The second solution family uses the adjoint equation, since $T_j^*=J_{-a}+K_j$.

Define closed subspaces of $H_a$ by
\begin{equation}\label{eq:compatible}
\mathcal G_1=\ker(\Pi_1^LC_1),\qquad
\mathcal G_2=\ker(\Pi_2^RC_2),\qquad
\mathcal G=\mathcal G_1\cap\mathcal G_2,
\end{equation}
and let $P$ be the orthogonal projection onto $\mathcal G$.
The map $g\mapsto(\Pi_1^LC_1g,\Pi_2^RC_2g)$ has kernel $\mathcal G$ and finite-dimensional range.
Therefore, with $D_\tau=\Gamma_{q_1}(\tau)+\Gamma_{q_2}(\tau)$,
\begin{equation}\label{eq:codimension}
r(a,\tau):=\operatorname{rank}(I-P)
\leq\operatorname{rank}\Pi_1^L+\operatorname{rank}\Pi_2^R
\leq C\tau^{n-2}D_\tau.
\end{equation}
The orthogonal complement of the kernel of this map is isomorphic to its range, whose dimension is at most the sum of the two target dimensions.
Applying \eqref{eq:rank} to those dimensions gives the second inequality in \eqref{eq:codimension}.

On the whole amplitude space set
\begin{equation}\label{eq:corrected}
X_1=E_a-AW_1C_1,\qquad
X_2=E_a-AW_2^*C_2,
\qquad N_j=C_j^*W_jC_j.
\end{equation}
All compositions in $X_j$ map continuously to $H^1(B)$ for fixed parameters.
The correction responses are trace class and satisfy
\begin{equation}\label{eq:N-bound}
\|N_j\|_{\mathfrak S_1}\leq\|W_j\|\|C_j\|_{\mathfrak S_2}^2
\leq4\|C_j\|_{\mathfrak S_2}^2.
\end{equation}
This uses the product inequality $\|UV\|_{\mathfrak S_1}\leq\|U\|_{\mathfrak S_2}\|V\|_{\mathfrak S_2}$ and the ideal property of these classes.

The residual equations identify the role of the compatibility spaces:
\begin{equation}\label{eq:residuals}
(d_a(D)+q_1)X_1=A^\sharp\Pi_1^LC_1,
\qquad
(d_{-a}(D)+q_2)X_2=A^\sharp\Pi_2^RC_2.
\end{equation}
For the first equation, the operator identity $(d_a(D)+q_1)Az=A^\sharp T_1z$ follows by adding the potential term to \eqref{eq:A-identities}.
Inserting $z=W_1C_1g$ therefore gives the distributional calculation
\[
\begin{aligned}
(d_a(D)+q_1)X_1g
&=q_1E_ag-A^\sharp T_1W_1C_1g\\
&=A^\sharp C_1g-A^\sharp(I-\Pi_1^L)C_1g
=A^\sharp\Pi_1^LC_1g.
\end{aligned}
\]
For the second equation, taking adjoints of $W_2T_2=I-\Pi_2^R$ gives $T_2^*W_2^*=I-\Pi_2^R$.
The identity $(d_{-a}(D)+q_2)Az=A^\sharp T_2^*z$ then yields
\[
\begin{aligned}
(d_{-a}(D)+q_2)X_2h
&=q_2E_ah-A^\sharp T_2^*W_2^*C_2h\\
&=A^\sharp C_2h-A^\sharp(I-\Pi_2^R)C_2h
=A^\sharp\Pi_2^RC_2h.
\end{aligned}
\]
All terms lie in $H^{-1}_{\mathrm{loc}}$; the identity $A^\sharp C_j=q_jE_a$ is applied to the compactly supported distribution $q_jE_ag$.

The residuals vanish exactly on the corresponding spaces in \eqref{eq:compatible}.

If $g\in\mathcal G_1$ and $h\in\mathcal G_2$, the functions
\begin{equation}\label{eq:exact-solutions}
u_1=\exp(\tau a\cdot x)X_1g,
\qquad u_2=\exp(-\tau a\cdot x)\overline{X_2h}
\end{equation}
solve the two Schr\"odinger equations in $B$.
Indeed,
\[
(-\Delta+q_j)(\exp(\pm\tau a\cdot x)v)
=\exp(\pm\tau a\cdot x)(d_{\pm a}(D)+q_j)v.
\]
The second equation is preserved by complex conjugation because the differential expression $d_{-a}(D)$ and the potential $q_2$ have real coefficients.
The exponentials are smooth bounded multipliers on each fixed ball for fixed $\tau$.
Thus \eqref{eq:exact-solutions} gives genuine local energy solutions, as required by \eqref{eq:alessandrini}.

The entire quadratic correction must be retained before estimating the boundary identity \eqref{eq:alessandrini}.
Define $D_f$ on $H_a$ by
\[
(D_fg,h)_{H_a}=\langle f,(X_1g)\overline{X_2h}\rangle.
\]
It is a bounded operator by \eqref{eq:form-bound}.
The following identity expresses its four terms in the same Hilbert space.

\begin{lemma}\label{lem:nonlinear}
On the full incident space,
\begin{equation}\label{eq:full-response}
D_f=Q_f-N_1+N_2
+C_2^*(\Pi_2^RW_1-W_2\Pi_1^L)C_1.
\end{equation}
If the Dirichlet-to-Neumann maps agree, the compatible compression satisfies
\begin{equation}\label{eq:compressed-response}
PQ_fP=P(N_1-N_2)P.
\end{equation}
\end{lemma}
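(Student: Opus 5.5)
The plan is to expand the sesquilinear form defining $D_f$ term by term, using $X_1=E_a-AW_1C_1$ and $X_2=E_a-AW_2^*C_2$, and then to convert each of the four resulting pairings into an $L^2$ or $H_a$ inner product. For $g,h\in H_a$, put $z=W_1C_1g$ and $w=W_2^*C_2h$. Bilinearity of \eqref{eq:q-form} gives
\[
(D_fg,h)=\langle f,(E_ag)\overline{E_ah}\rangle-\langle f,(Az)\overline{E_ah}\rangle-\langle f,(E_ag)\overline{Aw}\rangle+\langle f,(Az)\overline{Aw}\rangle .
\]
The first term is $(Q_fg,h)$ by \eqref{eq:Q-form}. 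For the others, write $f=q_1-q_2$ and use three dictionary identities, valid for $j=1,2$, $z,w\in L^2$ and $g,h\in H_a$:
\[
\langle q_j,(Az)\overline{E_ah}\rangle=(C_j^*z,h),\qquad
\langle q_j,(E_ag)\overline{Aw}\rangle=(C_jg,w),\qquad
\langle q_j,(Az)\overline{Aw}\rangle=(K_jz,w).
\]
The third identity is the definition \eqref{eq:K-form}. The second follows from the duality identity \eqref{eq:A-duality} with $s=q_jE_ag\in H^{-1}_K$, since $(As,w)=(C_jg,w)$. The first follows from the second by symmetry of the bilinear form and complex conjugation. Here reality of $q_j$ gives $q_j\overline{E_ah}=\overline{q_jE_ah}$, so the pairing equals $\overline{(C_jh,z)}=(C_j^*z,h)$.

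The main technical point is justifying these identities when $Az$ is only in $H^1_{\mathrm{loc}}$. I would handle it exactly as after \eqref{eq:A-duality}: first use the Fourier test class, then the local convergence from Lemma~\ref{lem:A-maps} and \eqref{eq:form-bound}. The smoothness bound \eqref{eq:incident-smooth} supplies the local $H^1$ control on $E_ag$.

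Substituting the dictionary gives
\[
D_f=Q_f-(C_1^*-C_2^*)W_1C_1-C_2^*W_2(C_1-C_2)+C_2^*W_2(K_1-K_2)W_1C_1 .
\]
Since $K_1-K_2=T_1-T_2$ and both $T_j$ contain the same $J_a$, the identities \eqref{eq:partial-identities} give
\[
W_2(K_1-K_2)W_1=W_2T_1W_1-W_2T_2W_1=W_2(I-\Pi_1^L)-(I-\Pi_2^R)W_1 .
\]
Collecting terms, the pieces $C_2^*W_1C_1$ and $C_2^*W_2C_1$ cancel. What remains is $-N_1+N_2+C_2^*(\Pi_2^RW_1-W_2\Pi_1^L)C_1$, which is \eqref{eq:full-response}.

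For \eqref{eq:compressed-response}, take $g\in\mathcal G_1$ and $h\in\mathcal G_2$. By \eqref{eq:exact-solutions}, $u_1$ and $u_2$ are exact solutions, and the exponential weights cancel in the product, so $u_1u_2=(X_1g)\overline{X_2h}$. The identity \eqref{eq:alessandrini} therefore yields $(D_fg,h)=0$.

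In the last term of \eqref{eq:full-response}, both pieces vanish for such $g,h$. First, $(C_2^*\Pi_2^RW_1C_1g,h)=(W_1C_1g,\Pi_2^RC_2h)=0$ because $\Pi_2^R$ is self-adjoint and $h\in\mathcal G_2$. Second, $W_2\Pi_1^LC_1g=0$ because $g\in\mathcal G_1$. Since $\mathcal G\subset\mathcal G_1\cap\mathcal G_2$, compressing by $P$ on both sides gives $0=P(Q_f-N_1+N_2)P$, which is the claim.
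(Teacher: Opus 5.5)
Your proposal is correct and follows the paper's proof essentially step for step: the same four-term expansion of $(X_1g)\overline{X_2h}$, the same conversion of each pairing via \eqref{eq:A-duality} and \eqref{eq:K-form} into $Q_f-(C_1-C_2)^*W_1C_1-C_2^*W_2(C_1-C_2)+C_2^*W_2(T_1-T_2)W_1C_1$, the same use of \eqref{eq:partial-identities} on adjacent factors only, and the same vanishing of the two defect pairings on $\mathcal G_1\times\mathcal G_2$. The one thing the paper adds is that every term factors between the Hilbert--Schmidt operators $C_1,C_2$, so the identity also holds in $\mathfrak S_1$; this is used later but is not needed for the statement itself.
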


\begin{proof}
For $g,h\in H_a$, put $v=W_1C_1g$ and $z=W_2^*C_2h$, both in $L^2(\mathbb{R}^n)$.
Expanding the defining form of $D_f$ yields the free term and three correction terms:
\[
\begin{aligned}
(D_fg,h)={}&\langle f,E_ag\,\overline{E_ah}\rangle
-\langle f,Av\,\overline{E_ah}\rangle\\
&-\langle f,E_ag\,\overline{Az}\rangle
+\langle f,Av\,\overline{Az}\rangle.
\end{aligned}
\]
All pairings are defined by \eqref{eq:q-form}, since the factors belong locally to $H^1$.
Set $B=AfE_a=C_1-C_2$ for this calculation.
Reality of $f$, the inner-product convention, and \eqref{eq:A-duality} give
\[
\begin{aligned}
\langle f,Av\,\overline{E_ah}\rangle
&=(v,Bh)=(B^*W_1C_1g,h),\\
\langle f,E_ag\,\overline{Az}\rangle
&=(Bg,z)=(C_2^*W_2Bg,h),\\
\langle f,Av\,\overline{Az}\rangle
&=((K_1-K_2)v,z)
=(C_2^*W_2(K_1-K_2)W_1C_1g,h).
\end{aligned}
\]
In the first line, one can apply \eqref{eq:A-duality} to $f\overline{E_ah}$ and $\overline v$, or equivalently conjugate the identity for $fE_ah$; the real even symbol of $A$ commutes with conjugation.
These formulas determine the order of every factor before any cancellation.
Since $AfE_a=C_1-C_2$ and $AfA=K_1-K_2=T_1-T_2$, the complete expansion is
\begin{equation}\label{eq:four-terms}
\begin{split}
D_f={}&Q_f-(C_1-C_2)^*W_1C_1-C_2^*W_2(C_1-C_2)\\
&+C_2^*W_2(T_1-T_2)W_1C_1.
\end{split}
\end{equation}
The middle factor remaining after extracting $Q_f-N_1+N_2$ equals
\[
\begin{split}
W_1-W_2+W_2T_1W_1-W_2T_2W_1
&=W_1-W_2+W_2(I-\Pi_1^L)-(I-\Pi_2^R)W_1\\
&=\Pi_2^RW_1-W_2\Pi_1^L.
\end{split}
\]
Only adjacent factors have been combined: $T_1W_1$ in the first product and $W_2T_2$ in the second.
No commutation of operators has been used.
This proves \eqref{eq:full-response}; it also shows that $D_f$ is trace class.
For the trace-class assertion, $Q_f\in\mathfrak S_1$ has already been established by \eqref{eq:trace-factorization}, and $N_j\in\mathfrak S_1$ by \eqref{eq:N-bound}.
Each remaining term factors between the two Hilbert--Schmidt source operators.
For instance,
\[
\|C_2^*\Pi_2^RW_1C_1\|_{\mathfrak S_1}
\leq\|C_2\|_{\mathfrak S_2}\|\Pi_2^R\|\|W_1\|\|C_1\|_{\mathfrak S_2}
\leq4\|C_2\|_{\mathfrak S_2}\|C_1\|_{\mathfrak S_2}.
\]
The same estimate applies to $C_2^*W_2\Pi_1^LC_1$.
Thus every term in \eqref{eq:full-response} is a bounded operator on $H_a$, and the equality proved through arbitrary pairings also holds in the trace class.

For $g\in\mathcal G_1$ and $h\in\mathcal G_2$, substitute \eqref{eq:exact-solutions} into \eqref{eq:alessandrini}.
The real exponentials cancel and give $(D_fg,h)=0$.
The two defect pairings in \eqref{eq:full-response} vanish separately:
\[
\begin{aligned}
(C_2^*\Pi_2^RW_1C_1g,h)
&=(W_1C_1g,\Pi_2^RC_2h)=0,\\
(C_2^*W_2\Pi_1^LC_1g,h)
&=(W_2\Pi_1^LC_1g,C_2h)=0.
\end{aligned}
\]
The first uses the condition on $h$ and self-adjointness of the orthogonal projection $\Pi_2^R$; the second uses the condition on $g$.
Thus
\begin{equation}\label{eq:compatible-pairing}
((Q_f-N_1+N_2)g,h)_{H_a}=0,
\qquad g\in\mathcal G_1,\quad h\in\mathcal G_2.
\end{equation}
Restricting both vectors to their intersection proves \eqref{eq:compressed-response}.

The proof is complete.
\end{proof}

\subsection{Averaged trace decay}\label{sec:trace}
The compression identity \eqref{eq:compressed-response} gives information only on a subspace of finite codimension.
To reach all frequencies, we first bound the trace norm of the free response $Q_f$ on $H_a$ and then test that norm with explicitly constructed bounded operators~\cite{simon2005trace}.

The following consequence of the singular-value definitions of the Schatten norms converts a rank bound into a square root loss~\cite[Chapter~2]{simon2005trace}.

\begin{lemma}\label{lem:compression-trace}
Let $Q\in\mathfrak S_1(H)$ on a Hilbert space $H$, and let $P$ be an orthogonal projection with $r=\operatorname{rank}(I-P)<\infty$.
Then
\begin{equation}\label{eq:compression-trace}
\|Q\|_{\mathfrak S_1}
\leq\sqrt{2r}\,\|Q\|_{\mathfrak S_2}+\|PQP\|_{\mathfrak S_1}.
\end{equation}
\end{lemma}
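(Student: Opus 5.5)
Write $P^\perp=I-P$, a projection of rank $r$. The plan is to split $Q$ into the compression $PQP$ and the part that touches the exceptional range of $P^\perp$, then bound the latter by its rank:
\[
Q=PQP+P^\perp Q+PQP^\perp .
\]
The triangle inequality in $\mathfrak S_1$ then gives
\[
\|Q\|_{\mathfrak S_1}\leq\|PQP\|_{\mathfrak S_1}+\|P^\perp Q+PQP^\perp\|_{\mathfrak S_1},
\]
so it suffices to bound the last term by $\sqrt{2r}\,\|Q\|_{\mathfrak S_2}$.

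The operator $R=P^\perp Q+PQP^\perp$ has range contained in $\operatorname{ran}P^\perp+\operatorname{ran}(PQP^\perp)$. Each of these spaces has dimension at most $r$, so $\operatorname{rank}R\leq 2r$. For an operator of rank at most $k$, Cauchy--Schwarz on its at most $k$ nonzero singular values gives
\[
\|R\|_{\mathfrak S_1}=\sum_{j\leq k}s_j(R)\leq\sqrt k\Big(\sum_j s_j(R)^2\Big)^{1/2}=\sqrt k\,\|R\|_{\mathfrak S_2}.
\]
This yields $\|R\|_{\mathfrak S_1}\leq\sqrt{2r}\,\|R\|_{\mathfrak S_2}$.

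It remains to check that $\|R\|_{\mathfrak S_2}\leq\|Q\|_{\mathfrak S_2}$. In the Hilbert--Schmidt inner product, the blocks $P^\perp Q$, $PQP^\perp$ and $PQP$ are mutually orthogonal. This holds because their ranges or co-ranges are orthogonal: for instance $\operatorname{Tr}\big((PQP^\perp)^*P^\perp Q\big)=\operatorname{Tr}(P^\perp Q^*PP^\perp Q)=0$. Consequently
\[
\|Q\|_{\mathfrak S_2}^2=\|P^\perp Q\|_{\mathfrak S_2}^2+\|PQP^\perp\|_{\mathfrak S_2}^2+\|PQP\|_{\mathfrak S_2}^2\geq\|R\|_{\mathfrak S_2}^2 .
\]
Combining the three steps proves \eqref{eq:compression-trace}.

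There is no real obstacle here. The only point needing care is to use the full block decomposition, rather than bounding $P^\perp Q$ and $QP^\perp$ separately, since the latter would double count the $P^\perp QP^\perp$ block and produce $2\sqrt r$ instead of $\sqrt{2r}$. With the grouping above, the rank count $2r$ together with Hilbert--Schmidt orthogonality gives exactly the constant $\sqrt{2r}$.
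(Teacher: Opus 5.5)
Your proposal is correct and follows the paper's proof essentially step for step. Both write $Q-PQP=(I-P)Q+PQ(I-P)$, note that this operator has rank at most $2r$, use Hilbert--Schmidt orthogonality of the blocks to bound its $\mathfrak S_2$ norm by $\|Q\|_{\mathfrak S_2}$, and then apply Cauchy--Schwarz on the at most $2r$ nonzero singular values together with the triangle inequality.
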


\begin{proof}
The operator $R=Q-PQP=(I-P)Q+PQ(I-P)$ has rank at most $2r$.
Orthogonality of the blocks relative to $H=PH\oplus(I-P)H$ gives
$\|R\|_{\mathfrak S_2}^2=\|Q\|_{\mathfrak S_2}^2-\|PQP\|_{\mathfrak S_2}^2$.
Cauchy--Schwarz for its at most $2r$ nonzero singular values yields
\[
\|R\|_{\mathfrak S_1}\leq\sqrt{2r}\,\|R\|_{\mathfrak S_2}
\leq\sqrt{2r}\,\|Q\|_{\mathfrak S_2}.
\]
The triangle inequality proves \eqref{eq:compression-trace}.
\end{proof}

The square root cannot generally be omitted.
On $\mathbb C^r\oplus\mathbb C^r$, take $P=\operatorname{diag}(0,I_r)$ and $Q$ with identity off-diagonal blocks and zero diagonal blocks.
Then $PQP=0$, while $Q$ has $2r$ singular values equal to one.
Equality holds in the first term of \eqref{eq:compression-trace}.
Thus the finite-dimensional estimate is sharp independently of the PDE application.

The following proposition gives the central estimate of the averaged trace norm method.
\begin{proposition}\label{prop:trace-decay}
Under the assumptions of Theorem~\ref{thm:main}, if the two boundary maps agree, then the response for $f=q_1-q_2$ satisfies
\[
\mathbb E_a\|Q_f(a,\tau)\|_{\mathfrak S_1}\longrightarrow0
\qquad\text{as }\tau\longrightarrow\infty.
\]
\end{proposition}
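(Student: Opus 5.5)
Fix a direction $a$ and a large $\tau\geq\max\{\tau_0(q_1),\tau_0(q_2)\}$, so that Proposition~\ref{prop:partial} applies to both potentials. We then apply Lemma~\ref{lem:compression-trace} with $Q=Q_f(a,\tau)$ and $P$ the projection onto the compatible space $\mathcal G$ from \eqref{eq:compatible}. This is legitimate because $Q_f\in\mathfrak S_1$ by \eqref{eq:trace-factorization}. The compression identity \eqref{eq:compressed-response} in Lemma~\ref{lem:nonlinear} replaces $PQ_fP$ by $P(N_1-N_2)P$. Since $P$ is a contraction, \eqref{eq:N-bound} then gives
\[
\|Q_f\|_{\mathfrak S_1}\leq\sqrt{2r(a,\tau)}\,\|Q_f\|_{\mathfrak S_2}
+4\|C_1\|_{\mathfrak S_2}^2+4\|C_2\|_{\mathfrak S_2}^2 .
\]
This is a pointwise bound in $a$. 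We will average it over $a\in S^{n-1}$.

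\textbf{Correction term.} Lemma~\ref{lem:source-average} gives $\mathbb E_a\|C_j\|_{\mathfrak S_2}^2\leq C_n\Gamma_{q_j}(\tau)$. Each $\Gamma_{q_j}(\tau)$ tends to zero by dominated convergence, as noted after \eqref{eq:gamma-intro}. So this contribution is at most $C D_\tau\to0$.

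\textbf{Rank term.} By \eqref{eq:codimension}, $r(a,\tau)\leq C\tau^{n-2}D_\tau$, uniformly in $a$, because $\tau_0(q_j)$ and the constants in \eqref{eq:rank} do not depend on $a$. Cauchy--Schwarz in $a$, combined with Lemma~\ref{lem:response-average}, gives
\[
\mathbb E_a\big(\sqrt{2r}\,\|Q_f\|_{\mathfrak S_2}\big)
\leq\big(C\tau^{n-2}D_\tau\big)^{1/2}\big(\mathbb E_a\|Q_f\|_{\mathfrak S_2}^2\big)^{1/2}
\leq C\big(\tau^{n-2}D_\tau\big)^{1/2}\big(c_n\tau^{-(n-2)}\|f\|_{\dot H^{-1}}^2\big)^{1/2}.
\]
The powers of $\tau$ cancel exactly, leaving $C D_\tau^{1/2}\|f\|_{\dot H^{-1}}$. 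Here $\|f\|_{\dot H^{-1}}$ is finite because $f\in H^{-1}_K$, as shown after \eqref{eq:homogeneous-norm}. Since $D_\tau\to0$, this term also tends to zero, and so $\mathbb E_a\|Q_f\|_{\mathfrak S_1}\leq C(D_\tau+D_\tau^{1/2})\to0$.

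\textbf{Main obstacle: measurability in $a$.} We need $a\mapsto\|Q_f(a,\tau)\|_{\mathfrak S_1}$ to be measurable, and we need the bound above to hold pointwise for every $a$. The spaces $H_a$ vary with $a$, so we transport everything to a fixed model sphere $S^{n-2}$ by a measurable choice of rotations, for instance a measurable section $a\mapsto\mathcal R_a\in SO(n)$ with $\mathcal R_ae_n=a$. After transport, the kernel \eqref{eq:Q-kernel} depends continuously on $a$ together with its derivatives, so \eqref{eq:trace-factorization} makes $a\mapsto Q_f$ continuous into $\mathfrak S_1$ on each chart. To avoid the measurability of $r(a,\tau)$ and of the projections, we use only the uniform upper bound on the rank. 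The averages of $\|C_j\|_{\mathfrak S_2}^2$ and $\|Q_f\|_{\mathfrak S_2}^2$ are already covered by Tonelli in the proofs of the two averaging lemmas. The remaining point is the uniformity in $a$ of $\tau_0(q_j)$, which Proposition~\ref{prop:partial} already provides.
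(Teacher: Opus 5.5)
Your proposal is correct and follows the paper's proof essentially step by step: Lemma~\ref{lem:compression-trace} with the projection onto $\mathcal G$, the replacement $PQ_fP=P(N_1-N_2)P$ combined with \eqref{eq:N-bound}, the direction-independent rank bound \eqref{eq:codimension}, Cauchy--Schwarz in $a$ with the exact cancellation $\tau^{(n-2)/2}\tau^{-(n-2)/2}=1$, and measurability via pulled-back kernels and the factorization \eqref{eq:trace-factorization}. The only cosmetic difference is that you mention a global measurable section of rotations, whereas the paper (and your own continuity argument) really only needs local continuous orthogonal frames, which suffice because the trace norm is invariant under the unitary pullback.
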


\begin{proof}
Apply Lemma~\ref{lem:compression-trace} to the actual projection in \eqref{eq:compatible}.
Equations~\eqref{eq:N-bound}, \eqref{eq:codimension}, and \eqref{eq:compressed-response} give, for every direction,
\[
\|Q_f\|_{\mathfrak S_1}
\leq C\tau^{(n-2)/2}\sqrt{D_\tau}\,\|Q_f\|_{\mathfrak S_2}
+4\sum_{j=1}^2\|C_j\|_{\mathfrak S_2}^2.
\]
Average this inequality and use Cauchy--Schwarz with respect to the probability measure $\mu$.
Lemmas~\ref{lem:source-average} and \ref{lem:response-average} then imply
\begin{equation}\label{eq:averaged-trace}
\begin{split}
\mathbb E_a\|Q_f(a,\tau)\|_{\mathfrak S_1}
&\leq C\tau^{(n-2)/2}\sqrt{D_\tau}
\big(\mathbb E_a\|Q_f(a,\tau)\|_{\mathfrak S_2}^2\big)^{1/2}
+CD_\tau\\
&\leq C\|f\|_{\dot H^{-1}}\sqrt{D_\tau}+CD_\tau
\longrightarrow0.
\end{split}
\end{equation}
The cancellation occurs before the limit: $\tau^{(n-2)/2}\tau^{-(n-2)/2}=1$.
No positive power remains for the critical tail to absorb.

Locally identify the spaces $H_a$ by a continuous orthogonal frame.
The smooth kernels of $Q_f$ then depend continuously on $a$ with all derivatives on the fixed compact product sphere.
The factorization \eqref{eq:trace-factorization} shows that $Q_f$ depends continuously in trace norm under this identification.
More precisely, fix a reference normal $a_0$ in such a chart and choose continuous orthogonal maps $O_a$ with $O_aa_0=a$.
After pulling back both incident variables to $S(a_0^\perp)$, the kernel is
\[
\widetilde K_a(b,c)
=\langle f,\exp(\mathrm{i}\tau O_a(c-b)\cdot x)\rangle.
\]
For fixed $\tau$, compact support and finite order of $f$ imply continuity in $a$ of this kernel with every derivative in $b,c$, uniformly on the compact product sphere.
Let $\widetilde Q_a$ denote the pulled-back operator and put $L=1-\Delta_{S(a_0^\perp)}$.
Choose an integer $N>(n-2)/4$, so that $L^{-N}\in\mathfrak S_2$.
The factorization of the difference gives
\[
\begin{aligned}
\|\widetilde Q_a-\widetilde Q_{a'}\|_{\mathfrak S_1}
&\leq\|L^{-N}\|_{\mathfrak S_2}
\|L^N(\widetilde Q_a-\widetilde Q_{a'})\|_{\mathfrak S_2}\\
&=\|L^{-N}\|_{\mathfrak S_2}
\|L_b^N(\widetilde K_a-\widetilde K_{a'})\|_{L^2}
\longrightarrow0
\end{aligned}
\]
as $a'\to a$ in the chart.
Unitary pullback preserves the trace norm, and $|\|S\|_{\mathfrak S_1}-\|T\|_{\mathfrak S_1}|\leq\|S-T\|_{\mathfrak S_1}$.
Thus the scalar function $a\mapsto\|Q_f(a,\tau)\|_{\mathfrak S_1}$ is locally continuous and hence measurable on the ambient sphere.
The averaged trace estimate can therefore be integrated as an ordinary inequality of measurable functions.
The source norm is measurable directly from the nonnegative integral \eqref{eq:source-integral} and Tonelli; locally one may write $b=O_ac$ with a continuous orthogonal $O_a$ and integrate on a fixed sphere.
The exceptional rank has already been replaced by a bound independent of the direction, so no measurable choice of a basis for the changing exceptional spaces is required.
\end{proof}

\subsection{Recovery of every Fourier frequency}\label{sec:frequencies}
An ordinary trace sees only $\langle f,1\rangle$ by \eqref{eq:ordinary-trace}.
The full trace norm controls a larger family of tests: for every bounded operator $Z$,
\begin{equation}\label{eq:trace-duality}
|\operatorname{Tr}(ZQ)|\leq\|Z\|\|Q\|_{\mathfrak S_1}.
\end{equation}
This follows from a singular value expansion of $Q$ and the bound on each resulting rank-one trace.
We use rotations to evaluate the smooth kernel away from its ordinary diagonal while keeping $\|Z\|$ uniformly bounded.

\begin{lemma}\label{lem:frequency}
Let $n\geq3$ and let $f\in\mathcal E'(\mathbb{R}^n)$ be a compactly supported distribution.
If $Q_f$ is defined by \eqref{eq:Q-kernel} and
\[
\mathbb E_a\|Q_f(a,\tau)\|_{\mathfrak S_1}\longrightarrow0,
\]
then $f=0$.
\end{lemma}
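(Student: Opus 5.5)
Fix a frequency $\xi_0\in\mathbb{R}^n\setminus\{0\}$; since $\widehat f$ is continuous, it suffices to show $\widehat f(\xi_0)=0$ for every such $\xi_0$ (or for a dense set). The kernel \eqref{eq:Q-kernel} is $K(b,c)=(2\pi)^{n/2}\widehat f(\tau(b-c))$. The plan is to build, for each $a$ and large $\tau$, a test operator $Z=Z_{a,\tau}$ on $H_a$ with $\|Z\|\le1$ such that $\operatorname{Tr}(ZQ_f)$ averages the kernel over pairs $(b,c)$ with $\tau(b-c)$ close to $\xi_0$. Then \eqref{eq:trace-duality} gives
\[
\mathbb E_a|\operatorname{Tr}(Z_{a,\tau}Q_f)|\le\mathbb E_a\|Q_f\|_{\mathfrak S_1}\to0,
\]
and a limit computation identifies the left side with a positive multiple of $|\widehat f(\xi_0)|$.

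\textbf{Choice of test operator.} Take $Z$ to be a rotation operator $(Zg)(b)=g(Ob)$, where $O=O_{a,\tau}$ is an orthogonal map preserving $a^\perp$. It is unitary on $H_a$, so $\|Z\|=1$. For an integral operator with smooth kernel, $ZQ_f$ has kernel $K(Ob,c)$, and \eqref{eq:smooth-diagonal} gives
\[
\operatorname{Tr}(ZQ_f)=(2\pi)^{n/2}\int_{S(a^\perp)}\widehat f\big(\tau(Ob-b)\big)\,\mathrm d\sigma_a(b).
\]
When $\xi_0\cdot a=0$, which is possible as soon as $n\geq3$, choose $O$ a rotation in a $2$-plane of $a^\perp$ containing $\xi_0$, by a small angle $\theta\sim|\xi_0|/\tau$. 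The vectors $Ob-b$ then lie in that plane, with size $2|\sin(\theta/2)|\,|P b|$, where $P$ is projection onto the plane. Hence $\tau(Ob-b)\approx\theta\tau J(Pb)$, with $J$ the quarter-turn. This still averages over a circle of frequencies of radius up to $|\xi_0|$, so it does not yet isolate $\xi_0$.

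\textbf{Isolating a single frequency.} To fix this, I would instead use $Z=M_\phi\circ\mathrm{Rot}$ with a bounded multiplier $\phi$, $|\phi|\le1$, or, more simply, a fixed family of angles. Integrating against $\phi$ gives
\[
\int\phi(b)\,\widehat f(\tau(Ob-b))\,\mathrm d\sigma_a(b).
\]
As $\tau\to\infty$, with $\theta\tau=\lambda$ fixed and $\widehat f$ uniformly continuous on compacts, this converges to
\[
\int\phi(b)\,\widehat f(\lambda JPb)\,\mathrm d\sigma_a(b),
\]
uniformly in $a$. Choosing $\phi=\overline{\operatorname{sgn}}$-type weights is awkward, so I would use linearity instead. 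The averaged quantity tends to zero for every bounded $\phi$, so the pushforward measure $\widehat f(\lambda JPb)\,\mathrm d\sigma_a(b)$ vanishes as a function of $Pb$. That is, $\widehat f(\lambda Jy)\,\rho(y)=0$ for a.e. $y$ in the unit disk of the plane, where $\rho>0$ is the (interior-positive) projected density. By continuity, $\widehat f$ vanishes on the disk of radius $\lambda$ in the plane. Here I need care: the limit statement is only in $a$-average, so I would first deduce $\mathbb E_a|\int\phi\,\widehat f(\lambda JPb)|=0$, apply it to a countable dense family of $\phi$, and conclude that $\widehat f$ vanishes on every $2$-plane through $0$ orthogonal to $a$ for a.e. $a$. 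Such planes cover $\mathbb{R}^n$, so $\widehat f\equiv0$ and $f=0$.

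\textbf{Main obstacle.} The hardest step is making the choice of $O_{a,\tau}$ and $\phi$ measurable and locally continuous in $a$, so that the averaged inequality can be integrated. This requires a measurable frame on $a^\perp$ containing the chosen plane, which can be done chartwise as in the proof of Proposition~\ref{prop:trace-decay}. The uniform convergence in $a$ also has to be justified, via uniform continuity of $\widehat f$ on compact sets. An alternative that avoids the disk-density step is to use that $\widehat f$ extends to an entire function. Then vanishing on a small open disk in each plane forces vanishing of all Taylor coefficients in those directions, and hence $f=0$.
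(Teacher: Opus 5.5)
Your proposal is essentially the paper's proof. The paper's generator $M=k\otimes b_0-b_0\otimes k$ is exactly $|k|$ times your quarter-turn in the plane $\operatorname{span}\{b_0,k\}\subset a_0^\perp$, and the paper uses the same ingredients: test operators $\chi(a)M_{w_a}U_{\exp(M_a/\tau)}$ built from a continuous local frame, trace duality, and the uniform limit of the rotated traces. The paper then avoids the null set $\{a:a\perp\xi_0\}$ by using continuity of the limit $L(a)$ to get $L(a_0)=0$, and it takes the $\tau$-independent weight equal to the conjugate of the limiting integrand directly, which is admissible. In your a.e.\ variant, note two things: the hyperplanes $a^\perp$ over a full-measure set of $a$ form only a dense set, not a cover, though this suffices by continuity of $\widehat f$; and for $n=3$ there is no projected disk density because $Pb$ ranges over the unit circle, so you must also let $\lambda$ vary.
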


\begin{proof}
Put $F_f(v)=\langle f,\exp(\mathrm{i}v\cdot x)\rangle$.
This is smooth, with bounded derivatives on each compact real frequency set.
Choose $\eta\in C_c^\infty$ equal to one near $\operatorname{supp}f$.
The map $v\mapsto\eta(x)\exp(\mathrm{i}v\cdot x)$ is smooth in every test-function seminorm, so
\[
\partial_v^\beta F_f(v)=\langle f,(\mathrm{i}x)^\beta\exp(\mathrm{i}v\cdot x)\rangle,
\]
where the support cutoff is implicit.
The finite order of $f$ bounds these derivatives on each compact frequency set.
Fix $a_0\in S^{n-1}$ and a real skew-symmetric matrix $M$ with $Ma_0=0$.
On a neighborhood $U$ of $a_0$, choose smooth orthogonal matrices $O_a$ with $O_{a_0}=I$ and $O_aa_0=a$.
Such a local choice follows, for instance, by Gram--Schmidt applied to a fixed frame near $a_0$.
Set
\[
M_a=O_aMO_a^{-1},\qquad R_{a,\tau}=\exp(M_a/\tau).
\]
The matrix $R_{a,\tau}$ is orthogonal and fixes $a$, hence preserves the incident sphere and its measure.

Choose $w\in C^\infty(S(a_0^\perp))$ and a smooth cutoff $\chi$ compactly supported in $U$, with $0\leq\chi\leq1$ and $\chi(a_0)=1$.
Let $w_a(b)=w(O_a^{-1}b)$, let $M_{w_a}$ be multiplication by $w_a$, and let $U_Rg(b)=g(R^{-1}b)$.
For $a\in U$ define
\begin{equation}\label{eq:trace-test}
Z_{\tau,a}=\chi(a)M_{w_a}U_{R_{a,\tau}},
\end{equation}
and set it to zero outside $U$.
Since $U_R$ is unitary, $\|Z_{\tau,a}\|\leq\|w\|_\infty$ uniformly in $a,\tau$.
The rotation preserves probability measure, and multiplication by $w_a$ has norm $\|w_a\|_\infty=\|w\|_\infty$.
To compute the composition kernel, apply $Z_{\tau,a}$ to the output of $Q_f$:
\[
(Z_{\tau,a}Q_fg)(b)
=\chi(a)w_a(b)\int K_{f,a,\tau}(R_{a,\tau}^{-1}b,c)g(c)\,\mathrm{d}\sigma_a(c).
\]

Applying \eqref{eq:smooth-diagonal} to the smooth kernel of $Z_{\tau,a}Q_f$ gives
\begin{equation}\label{eq:rotated-trace}
\begin{split}
\operatorname{Tr}(Z_{\tau,a}Q_f)
&=\chi(a)\int_{S(a^\perp)}
w_a(b)F_f\big(\tau(b-R_{a,\tau}^{-1}b)\big)\,\mathrm{d}\sigma_a(b)\\
&=\chi(a)\int_{S(a_0^\perp)}
w(b)F_f\big(\tau O_a(I-\exp(-M/\tau))b\big)\,\mathrm{d}\sigma_{a_0}(b).
\end{split}
\end{equation}
The sign is fixed by the kernel: left composition with $U_R$ replaces its output variable by $R^{-1}b$, while its input on the diagonal remains $b$.

The exponential series gives
\[
\tau(I-\exp(-M/\tau))-M
=-\sum_{j=2}^\infty\frac{(-1)^j M^j}{j!\tau^{j-1}}.
\]
Taking norms and summing the exponential remainder gives
\begin{equation}\label{eq:rotation-error}
\|\tau(I-\exp(-M/\tau))-M\|
\leq\frac{\|M\|^2}{2\tau}\exp(\|M\|/\tau).
\end{equation}
Moreover $\|\tau(I-\exp(-M/\tau))\|\leq\|M\|\exp(\|M\|/\tau)$.
All arguments of $F_f$ in \eqref{eq:rotated-trace} therefore lie in one compact frequency ball independent of $a$ and $\tau\geq2$.
Define the limiting function
\begin{equation}\label{eq:rotation-limit}
L(a)=\chi(a)\int_{S(a_0^\perp)}w(b)F_f(O_aMb)\,\mathrm{d}\sigma_{a_0}(b),
\end{equation}
extended by zero outside $U$.
Put $R_M=1+\|M\|\exp(\|M\|/2)$ and $C_{f,M}=\sup_{|v|\leq R_M}|\nabla F_f(v)|$.
Orthogonality of $O_a$, the mean value estimate on this ball, and \eqref{eq:rotation-error} give
\[
\sup_a|\operatorname{Tr}(Z_{\tau,a}Q_f)-L(a)|
\leq\|w\|_\infty C_{f,M}\frac{\|M\|^2}{2\tau}\exp(\|M\|/\tau)
\longrightarrow0.
\]
Outside the support of $\chi$, both functions are zero, so the same supremum is taken over the entire ambient sphere.
The compact support of $\chi$ within $U$ also makes the zero extension of $L$ continuous across the edge of the frame chart.

On the other hand, \eqref{eq:trace-duality} gives
\[
\int|\operatorname{Tr}(Z_{\tau,a}Q_f)|\,\mathrm{d}\mu(a)
\leq\|w\|_\infty\mathbb E_a\|Q_f\|_{\mathfrak S_1}\longrightarrow0.
\]
More explicitly, if $T_\tau(a)=\operatorname{Tr}(Z_{\tau,a}Q_f)$, the probability normalization gives
\[
\int|L(a)|\,\mathrm{d}\mu(a)
\leq\sup_a|L(a)-T_\tau(a)|+\int|T_\tau(a)|\,\mathrm{d}\mu(a).
\]
Both terms tend to zero, proving the claim.
The argument uses an integral bound for all directions at once, so no direction depending on $\tau$ has to be selected.
Continuity and positivity of the measure of every nonempty open set then imply $L(a_0)=0$.
Since $\chi(a_0)=1$ and $O_{a_0}=I$, evaluating \eqref{eq:rotation-limit} at this point removes both localizing choices.
Consequently
\begin{equation}\label{eq:all-weights}
\int_{S(a_0^\perp)}w(b)F_f(Mb)\,\mathrm{d}\sigma_{a_0}(b)=0
\quad\text{for every smooth }w.
\end{equation}
The choice $w(b)=\overline{F_f(Mb)}$ is permitted and independent of $\tau$.
It gives an integral of $|F_f(Mb)|^2$, so continuity implies $F_f(Mb)=0$ at every point of the incident sphere.

For any $k\neq0$, choose orthonormal $a_0,b_0\in k^\perp$, which is possible for $n\geq3$.
For $u,v\in\mathbb{R}^n$ let $u\otimes v$ denote the map $x\mapsto u(v\cdot x)$, and set
\[
M=k\otimes b_0-b_0\otimes k.
\]
Then $M^t=-M$, $Ma_0=0$, and $Mb_0=k$.
Indeed, $a_0\cdot k=a_0\cdot b_0=k\cdot b_0=0$ and $|b_0|=1$, so
\[
Ma_0=k(b_0\cdot a_0)-b_0(k\cdot a_0)=0,
\qquad Mb_0=k|b_0|^2-b_0(k\cdot b_0)=k.
\]
The transpose interchanges the two tensor factors, giving $M^t=-M$.
Thus the chosen $b_0$ belongs to the incident sphere for $a_0$, and the already proved vanishing of $F_f(Mb)$ applies at precisely the prescribed frequency $k$.
Equation~\eqref{eq:all-weights} therefore gives $F_f(k)=0$.
Continuity supplies the zero frequency, and injectivity of the Fourier transform on distributions yields $f=0$.

The proof is complete.
\end{proof}

With $e_1=b_0$, $e_2=k/|k|$, and $\kappa=|k|$, the frequency displacement is
\[
\tau(e_1-\exp(-M/\tau)e_1)
=\tau(1-\cos(\kappa/\tau))e_1+\tau\sin(\kappa/\tau)e_2\longrightarrow k.
\]

\subsection{Completion of the proof and the order of limits}\label{sec:recovery}
The averaged trace norm method determines the distributional potential.
To finish the inverse conductivity problem we use the special logarithmic form of that potential and the positivity of the original coefficient.

We complete the proof of Theorem~\ref{thm:main} in the original domain.
Lemma~\ref{lem:boundary} and \eqref{eq:common-extension} give the common logarithmic extensions, and Lemma~\ref{lem:gluing} transfers equality of the data to $B$.
The identity \eqref{eq:liouville} yields the two compactly supported potentials and \eqref{eq:alessandrini}.
Proposition~\ref{prop:trace-decay} and Lemma~\ref{lem:frequency} give $q_1=q_2$ as distributions.

Put $u=h_1-h_2$, $H=h_1+h_2$, and $g=\exp(H)$.
The potential equality reads
\begin{equation}\label{eq:difference-potential}
\Delta u+\nabla H\cdot\nabla u=0.
\end{equation}
The product $\nabla H\cdot\nabla u$ belongs to $L^{n/2}$ and has compact support.
As a distribution on a ball containing that support, it acts continuously on $H^1_0$: one may also estimate it using $\nabla H\in L^n$, $\nabla u\in L^2$, and the Sobolev embedding of the test function into $L^{2^*}$.
Thus both terms of \eqref{eq:difference-potential} belong to $H^{-1}$ locally, and the equality extends from smooth tests to $H^1_0$ tests by density.
For a fixed smooth compactly supported $\phi$, boundedness of $g$ and $\nabla g\in L^n$ show that $g\phi\in H^1_0$ of such a ball.
This verifies the admissibility of the weighted test before expanding its gradient.

The weight $g$ is bounded above and below by positive numbers, and $\nabla g=g\nabla H$.
Pairing \eqref{eq:difference-potential} with $g\phi$ gives
\[
0=-\int\nabla u\cdot(g\nabla\phi+g\phi\nabla H)\,\mathrm{d}x
+\int g\phi\nabla H\cdot\nabla u\,\mathrm{d}x
=-\int g\nabla u\cdot\nabla\phi\,\mathrm{d}x.
\]
All these terms are integrable by H\"older with exponents $n,2^*,2$, as in \eqref{eq:form-bound}.
Thus $\operatorname{div}(g\nabla u)=0$ weakly.
Choose a ball containing the common support, so that $u\in H^1_0$ of that ball, and use density to test with $u$.
It follows that
\begin{equation}\label{eq:final-energy}
\int g|\nabla u|^2\,\mathrm{d}x=0.
\end{equation}
The positive lower bound on $g$ gives $\nabla u=0$ almost everywhere.
The function $u$ is constant on the connected space $\mathbb{R}^n$ and vanishes outside a compact set, hence is zero.
Therefore $\exp(2h_1)=\exp(2h_2)$ on $\mathbb{R}^n$.
Restricting to $\Omega$ proves Theorem~\ref{thm:main}.

There are two distinct truncations in the proof.
The height $b$ in \eqref{eq:high-small} truncates the values of the physical coefficients; the projection $L_\tau$ truncates the Fourier frequency at $R\tau$ for one fixed $R>2$.
For each fixed pair of coefficients, first choose the amplitude heights to make the two critical tails small enough, and then choose a common $\tau_0$ for the two partial inverses.
Thereafter $\tau$ may increase without changing those heights.

The Fourier tail in $\Gamma_q$ has a separate explicit estimate.
For $q=\operatorname{div}F+V$ in the present dimensions, $F\in L^2$ and $V\in L^1\cap L^2$.
Define
\[
D_F(\tau)=\int\frac{|\xi|}{|\xi|+\tau}|\widehat F(\xi)|^2\,\mathrm{d}\xi,
\qquad B_V=\int\frac{|\widehat V(\xi)|^2}{|\xi|}\,\mathrm{d}\xi<\infty.
\]
The finiteness of $B_V$ follows from boundedness of $\widehat V$ at low frequencies and Plancherel at high frequencies.
Since $\widehat q=\mathrm{i}\xi\cdot\widehat F+\widehat V$,
\begin{equation}\label{eq:explicit-tail}
\Gamma_q(\tau)\leq2D_F(\tau)+2B_V/\tau,
\qquad
D_F(\tau)\leq\int_{|\xi|>L}|\widehat F(\xi)|^2\,\mathrm{d}\xi
+\frac L\tau\|F\|_2^2.
\end{equation}
The first inequality follows from $|\widehat q|^2\leq2|\xi|^2|\widehat F|^2+2|\widehat V|^2$ and $|\xi|^2+\tau|\xi|\geq\tau|\xi|$.
For the second, split at $|\xi|=L$ and bound $|\xi|/(|\xi|+\tau)$ by one above $L$ and by $L/\tau$ below it.

For a desired error, choose a fixed Fourier radius $L$ to make the first term small, and then increase $\tau$ to control the second.
The radius $L$ and the amplitude height $b$ have different roles and need not be related.
Only finitely many such lower bounds on $\tau$ occur for the fixed pair of conductivities, so a common large parameter satisfies them all.

For frequency recovery, fix $k$, the associated matrix $M$, and a smooth weight $w$ before taking the limit.
The additional error \eqref{eq:rotation-error} is then $O_M(\tau^{-1})$ and can be made small by increasing the same parameter.

A bound on the $L^n$ norm alone does not give a uniform amplitude tail for a whole coefficient class.
For example, if $E_b$ has measure $(2b)^{-n}$ and $e$ is a unit vector, then $F^{(b)}=2be\mathbf{1}_{E_b}$ has $L^n$ norm one and its part above height $b$ has the same norm.
Consequently the above proof yields uniqueness for each fixed pair, but a stability modulus uniform on a class would require further control of concentration and the resulting tails.

\appendix
\section{Extension, zero extension, and interface gluing}\label{app:extension}
Subsection~\ref{sec:boundary} uses Sobolev extensions and interface gluing to compare the conductivities on a common larger domain.
We recall the required facts for a bounded Lipschitz domain $\Omega\subset\mathbb{R}^n$ and $1<p<\infty$.

\subsection{Extension from a Lipschitz domain}\label{app:extension-bounded}
The extension theorem for bounded Lipschitz domains gives a bounded linear operator
\[
\mathcal E:W^{1,p}(\Omega)\longrightarrow W^{1,p}(\mathbb{R}^n),
\qquad (\mathcal Ev)|_\Omega=v;
\]
see \cite[Chapter~2]{ern2021finite}.

\subsection{Zero extension and approximation from the interior}\label{app:extension-zero}
On a bounded Lipschitz domain, the trace and zero-extension theorems give
\[
\ker(\operatorname{tr}_{\partial\Omega})=W^{1,p}_0(\Omega),
\qquad v\in W^{1,p}_0(\Omega)\ \Longrightarrow\ v_0\in W^{1,p}(\mathbb{R}^n),
\]
where $v_0$ is the zero extension and $\partial_kv_0=(\partial_kv)_0$ in distributions~\cite[Chapter~3]{ern2021finite}.
Here $W^{1,p}_0(\Omega)$ is the closure of $C_c^\infty(\Omega)$ in $W^{1,p}(\Omega)$, so the interior approximation used in the proof is included in this characterization.

\subsection{Gluing across the original boundary}\label{app:extension-gluing}
Let $B$ be a ball containing $\overline\Omega$ in its interior, and put $D=B\setminus\overline\Omega$.
Suppose that $v\in H^1(\Omega)$ and $w\in H^1(D)$ have the same trace on $\partial\Omega$.
Define $U=v$ in $\Omega$ and $U=w$ in $D$.
Let $\nu$ be the outward unit normal of $\Omega$.
For $\phi\in C_c^\infty(B)$, the distributional integration-by-parts formula on the two sides gives
\[
\int_B U\,\partial_k\phi
=-\int_\Omega (\partial_kv)\phi-\int_D(\partial_kw)\phi
 +\int_{\partial\Omega}
   (\operatorname{tr}v-\operatorname{tr}w)\phi\nu_k\,\mathrm{d}S.
\]
The interface term vanishes, and the remaining derivatives belong to $L^2(B)$.
Consequently $U\in H^1(B)$, with its gradient given by the two piecewise gradients.
The integration-by-parts identity for Sobolev functions follows by approximation and continuity of the trace maps.
Conversely, approximating a global $H^1(B)$ function by smooth functions near the compact interface shows that its two traces coincide.
This proves the gluing property used in the variational transfer of boundary data, including when $D$ has several components.

\section{Alternative formulations of the trace estimate}\label{app:trace-alternatives}
The trace estimate in the averaged trace norm method admits two further formulations that retain the two compatibility spaces separately.
The first is an identity modulo a finite-rank response; the second repairs every incident wave inside the comparison ball while preserving its boundary trace.

Let $P_j$ be the orthogonal projection onto $\mathcal G_j$, where
\[
\mathcal G_1=\ker(\Pi_1^L C_1),\qquad
\mathcal G_2=\ker(\Pi_2^R C_2),\qquad
r_j=\operatorname{rank}(I-P_j).
\]
Equation~\eqref{eq:compatible-pairing} gives
\[
P_2(Q_f-N_1+N_2)P_1=0.
\]
Writing $R_f=Q_f-N_1+N_2$, we therefore obtain
\begin{equation}\label{eq:app-response-rank}
R_f=(I-P_2)R_f+P_2R_f(I-P_1),
\qquad \operatorname{rank}R_f\leq r_1+r_2.
\end{equation}
The first term has range in an $r_2$-dimensional space, and the second factors through an $r_1$-dimensional input space.

\begin{lemma}\label{lem:app-rank-trace}
If $Q=N+R$, where $Q,N$ are trace-class operators on a Hilbert space and $\operatorname{rank}R\leq k$, then
\begin{equation}\label{eq:app-rank-trace}
\|Q\|_{\mathfrak S_1}\leq
\sqrt{k}\,\|Q\|_{\mathfrak S_2}+\|N\|_{\mathfrak S_1}.
\end{equation}
\end{lemma}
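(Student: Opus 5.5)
Since $R=Q-N$, the triangle inequality gives $\|Q\|_{\mathfrak S_1}\leq\|R\|_{\mathfrak S_1}+\|N\|_{\mathfrak S_1}$, so the task is to bound $\|R\|_{\mathfrak S_1}$ by $\sqrt{k}\,\|Q\|_{\mathfrak S_2}$. Bounding $\|R\|_{\mathfrak S_2}$ directly by $\|Q\|_{\mathfrak S_2}$ is not available: unlike the compression in Lemma~\ref{lem:compression-trace}, $R$ is not an orthogonal block of $Q$, and $\|R\|_{\mathfrak S_2}\leq\|Q\|_{\mathfrak S_2}+\|N\|_{\mathfrak S_2}$ would produce an unwanted extra factor $\sqrt k$ on $N$. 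Instead, I would split $Q$ along the range of $R$, using a projection adapted to $R$.

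Let $E$ be the orthogonal projection onto $\operatorname{ran}R$, so that $\operatorname{rank}E\leq k$ and $(I-E)R=0$. Then
\[
Q=EQ+(I-E)Q=EQ+(I-E)N.
\]
For the first term, $EQ$ has rank at most $k$. By Cauchy--Schwarz over its at most $k$ nonzero singular values, exactly as in the proof of Lemma~\ref{lem:compression-trace},
\[
\|EQ\|_{\mathfrak S_1}\leq\sqrt{k}\,\|EQ\|_{\mathfrak S_2}\leq\sqrt{k}\,\|Q\|_{\mathfrak S_2},
\]
where the last inequality is the ideal property with $\|E\|\leq1$. For the second term, the ideal property again gives $\|(I-E)N\|_{\mathfrak S_1}\leq\|N\|_{\mathfrak S_1}$. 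Adding the two bounds by the triangle inequality yields \eqref{eq:app-rank-trace}.

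The only step needing care is this choice of splitting. Projecting onto $\operatorname{ran}R$, rather than estimating $R$ itself, makes the low-rank piece a compression of $Q$, whose Hilbert--Schmidt norm is controlled by $\|Q\|_{\mathfrak S_2}$, while the complementary piece involves only $N$. A finite-rank $R$ has closed range, so $E$ is well defined, and no further obstacle is expected.
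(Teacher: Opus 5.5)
Your proof is correct, but it takes a different route from the paper's.

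\textbf{The paper's route.} It invokes the singular-value inequality $s_{j+k}(Q)\leq s_j(N)+s_{k+1}(R)=s_j(N)$. Thus all singular values of $Q$ beyond the first $k$ are dominated termwise by those of $N$. It then bounds $\sum_{\ell\leq k}s_\ell(Q)\leq\sqrt{k}\,\|Q\|_{\mathfrak S_2}$ by Cauchy--Schwarz and sums the tail against $\|N\|_{\mathfrak S_1}$.

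\textbf{Your route.} You make the splitting explicit. With $E$ the projection onto $\operatorname{ran}R$, the identity $(I-E)Q=(I-E)N$ gives $Q=EQ+(I-E)N$. After that you need only the ideal property and Cauchy--Schwarz for an operator of rank at most $k$.

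\textbf{Comparison.} Your version is more elementary and self-contained. It needs no external singular-value inequality, and it is the one-sided analogue of the block argument in Lemma~\ref{lem:compression-trace}. The paper's version yields a finer termwise comparison of the singular sequences of $Q$ and $N$, though that extra information is not needed for \eqref{eq:app-rank-trace}.

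\textbf{A correction to your opening.} You write that ``the task is to bound $\|R\|_{\mathfrak S_1}$ by $\sqrt{k}\,\|Q\|_{\mathfrak S_2}$''. That reduction is not merely unavailable but false in general: take $Q=0$ and $N=-R\neq0$. Drop that sentence. Your actual argument never uses it, since it estimates $EQ$ rather than $R$.
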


\begin{proof}
It suffices to prove the estimate with $k=\operatorname{rank}R$.
Write $s_j(S)$ for the singular values of a compact operator $S$, in nonincreasing order and completed by zeros.
The singular-value inequality in \cite{simon2005trace} gives
\[
s_{j+k}(Q)\leq s_j(N)+s_{k+1}(R)=s_j(N).
\]
Consequently,
\[
\|Q\|_{\mathfrak S_1}
\leq\sum_{\ell=1}^k s_\ell(Q)+\sum_{j=1}^\infty s_j(N)
\leq\sqrt{k}\,\|Q\|_{\mathfrak S_2}+\|N\|_{\mathfrak S_1},
\]
where the first sum is empty if $k=0$.
\end{proof}

Apply Lemma~\ref{lem:app-rank-trace} with $N=N_1-N_2$ and $k=r_1+r_2$ in \eqref{eq:app-response-rank}.
The individual kernel maps give $r_1\leq\operatorname{rank}\Pi_1^L$ and $r_2\leq\operatorname{rank}\Pi_2^R$.
Their sum and \eqref{eq:rank} yield $k\leq C\tau^{n-2}D_\tau$, whereas \eqref{eq:N-bound} gives
\[
\|N_1-N_2\|_{\mathfrak S_1}
\leq4\sum_{j=1}^2\|C_j\|_{\mathfrak S_2}^2.
\]
Averaging with \eqref{eq:source-average} and \eqref{eq:response-average} cancels the powers of $\tau$ and again proves \eqref{eq:averaged-trace}.
This formulation uses two compatibility spaces separately and does not require their replacement by their intersection.

The conductivity origin of $q_j$ supplies an inverse on $B$ with zero Dirichlet boundary values.
For $u,v\in H^1_0(B)$, define
\[
\mathfrak a_j(u,v)=\int_B\nabla u\cdot\overline{\nabla v}\,\mathrm{d}x
  +\langle q_j,u\overline v\rangle.
\]
Equation~\eqref{eq:liouville} gives
\[
\mathfrak a_j(u,u)=\int_B\exp(2h_j)
     |\nabla(\exp(-h_j)u)|^2\,\mathrm{d}x.
\]
Suppose that $c_*\leq\exp(2h_j)\leq C_*$ and write $L_j=\|\nabla h_j\|_n$.
If $S_n$ is the homogeneous Sobolev constant and $v=\exp(-h_j)u$, then
\[
\|\nabla u\|_2\leq \sqrt{C_*}(1+S_nL_j)\|\nabla v\|_2.
\]
Here $v\in H^1_0(B)$, so the Sobolev inequality is applied to its zero extension.
Consequently
\[
\mathfrak a_j(u,u)\geq
\frac{c_*}{C_*(1+S_nL_j)^2}\|\nabla u\|_2^2.
\]
Together with boundedness of the potential form, this coercivity defines by Lax--Milgram a bounded inverse
\[
S_j=(-\Delta+q_j)^{-1}_D:H^{-1}(B)\longrightarrow H^1_0(B).
\]
In this formula $H^{-1}(B)$ denotes the dual of $H^1_0(B)$, with the conjugation in the weak equation chosen consistently with $\mathfrak a_j$.

Set $Z_1=\Pi_1^L C_1$ and $Z_2=\Pi_2^R C_2$.
Equation~\eqref{eq:residuals}, valid on the whole incident space, reads
\[
(d_a(D)+q_1)X_1=A^\sharp Z_1,
\qquad
(d_{-a}(D)+q_2)X_2=A^\sharp Z_2.
\]
Let $M_\pm$ denote multiplication by $\exp(\pm\tau a\cdot x)$ on $B$, and let $\operatorname{Res}_B$ restrict a global $H^{-1}$ distribution to the dual of $H^1_0(B)$.
Define
\[
\begin{gathered}
\mathcal R_1=M_-S_1M_+\operatorname{Res}_B A^\sharp,\qquad
\mathcal R_2=M_+S_2M_-\operatorname{Res}_B A^\sharp,\\
Y_j=X_j|_B-\mathcal R_jZ_j.
\end{gathered}
\]
Each factor is bounded for fixed $(a,\tau)$, and $\mathcal R_j$ takes values in $H^1_0(B)$.
Conjugating the defining equations for $S_j$ verifies that $(d_a(D)+q_1)Y_1=0$ and $(d_{-a}(D)+q_2)Y_2=0$.
Thus $M_+Y_1g$ and $M_-\overline{Y_2h}$ are exact Schr\"odinger solutions for all incident amplitudes $g,h$.
The correction has zero trace, so the boundary traces of the original $X_j$ are preserved.
Although $\|\mathcal R_j\|$ can grow exponentially with $\tau$, its composition with $Z_j$ has rank at most $\operatorname{rank}Z_j$.

In the following formulas, $X_j$ denotes its restriction as an operator from $H_a$ to $H^1(B)$, and adjoints are taken with respect to the indicated Hilbert spaces.
To express the repaired response, represent the bounded form on $H^1(B)$ by an operator $V_f$ satisfying
\[
(V_fu,v)_{H^1(B)}=\langle f,u\overline v\rangle.
\]
Applying \eqref{eq:alessandrini} to these exact solutions gives $Y_2^*V_fY_1=0$.
Expanding first in the first input and then in the second gives
\[
X_2^*V_fX_1=X_2^*V_f\mathcal R_1Z_1
                 +Z_2^*\mathcal R_2^*V_fY_1.
\]
Combining this with \eqref{eq:full-response} yields
\begin{equation}\label{eq:app-repaired-response}
Q_f=N_1-N_2+B_1Z_1+Z_2^*B_2,
\end{equation}
where
\[
B_1=X_2^*V_f\mathcal R_1+C_2^*W_2,
\qquad
B_2=\mathcal R_2^*V_fY_1-W_1C_1.
\]
The last two terms in \eqref{eq:app-repaired-response} have total rank at most $\operatorname{rank}\Pi_1^L+\operatorname{rank}\Pi_2^R$.
After averaging, Lemma~\ref{lem:app-rank-trace} therefore gives \eqref{eq:averaged-trace} without estimating their operator norms uniformly in $\tau$.

\section*{Acknowledgments}
OpenAI Codex was used to assist with English drafting and revision, manuscript organization, and literature searches and checks.
The authors checked the mathematical arguments and proofs and take responsibility for the mathematical content, references, and final exposition.

\bibliographystyle{plain}
\bibliography{references}
\end{document}